\theoremstyle{plain}
\newtheorem{thm}{Theorem}[section]
\newtheorem{prop}[thm]{Proposition}
\newtheorem{lem}[thm]{Lemma}
\newtheorem{cor}[thm]{Corollary}
\theoremstyle{remark}
\newtheorem{remark}[thm]{Remark}
\numberwithin{equation}{section}
\newcommand{\ca}{\mathcal}
\newcommand{\f}{\mathfrak}
\newcommand{\n}{\textup}
\newcommand{\Z}{\mathbb{Z}}
\newcommand{\Q}{\mathbb{Q}}
\newcommand{\C}{\mathbb{C}}
\newcommand{\Id}{\n{Id}}
\newcommand{\End}{\n{End}}
\newcommand{\GL}{\n{GL}}
\newcommand{\Tr}{\mathop{\n{Tr}}}
\newcommand{\dom}{\n{dom}}
\newcommand{\Mer}{\n{Mer}}
\newcommand{\eqrefs}[2]{\textnormal{(\ref{#1}-\ref{#2})}}
\title[Boundary transfer matrices and boundary quantum KZ equations]{\hspace{9.3mm} Boundary transfer matrices and \newline boundary quantum KZ equations}
\author{Bart Vlaar}
\address{School of Mathematical Sciences, University of Nottingham, NG7 2RD, UK}
\email{Bart.Vlaar@nottingham.ac.uk}
\begin{document}
\begin{abstract}
A simple relation between inhomogeneous transfer matrices and boundary quantum KZ equations is exhibited for quantum integrable systems with reflecting boundary conditions, analogous to an observation by Gaudin for periodic systems.
Thus the boundary quantum KZ equations receive a new motivation.
We also derive the commutativity of Sklyanin's boundary transfer matrices by merely imposing appropriate reflection equations, in particular without using the conditions of crossing symmetry and unitarity of the R-matrix.
\end{abstract}

\maketitle

\section{Introduction}

Many interesting objects associated to integrable models and representation theory are known to satisfy difference or differential equations.
The Knizhnik-Zamolodchikov (KZ) equations are differential equations defining conformal blocks in Wess-Zumino-Witten conformal field theory \cite{KZ} and describing intertwiners between certain representations of affine Kac-Moody algebras \cite{TK}; they are also connected to algebraic number theory \cite{Drinfeld}.
Quantum deformations of these equations yield difference equations, known as quantum Knizhnik-Zamolodchikov (qKZ) equations; they arise independently as equations satisfied by correlation functions and form factors of quantum integrable models \cite{JM,Sm} and by matrix elements of intertwiners for representations of quantum affine algebras \cite{FR}.
More recently, connections with combinatorics have been extensively investigated when the (multiplicative) shift parameter in the equations assumes root-of-unity values, in conjunction with the Razumov-Stroganov conjectures for loop models, e.g.\ in \cite{DFZJ,GP,RSZJ,ZJ}.

Cherednik \cite{Ch1,Ch2} constructed generalizations of the qKZ equations in terms of a so-called R-matrix datum associated to arbitrary affine root systems.
In this framework the aforementioned ``original'' qKZ equations correspond to the case where the affine root system is of type A and are related to integrable systems with periodic boundary conditions (or rather twisted-periodic or quasi-periodic, but we will nevertheless use ``periodic'' in our terminology).
If the affine root system is of a different classical type (i.e. of type B, C or D) we arrive at the \emph{boundary qKZ equations} (bqKZ), relating to integrable systems with up to two reflecting boundaries.
In the current work we will focus on the case when the affine root system is of type C - since the affine Weyl groups of types B and D naturally appear as normal subgroups of the affine Weyl group of type C, many of our results can be straightforwardly modified to results for types B and D.

The bqKZ equations were first studied and motivated in their own right in \cite{JKKMW}, where they describe correlation functions of semi-infinite spin chains with integrable boundary conditions.
It would be interesting to find other motivations, analogous to the ones for the type A qKZ equations.
This paper provides such a motivation by showing that in the limit that the shift parameter goes to 1, the bqKZ equations are related, through Yang's notion of scattering matrices, to interpolants of commuting transfer matrices, which is entirely parallel to an observation by Gaudin \cite[Ch.\ 10]{Gaudin} for the type A case (also cf.\ \cite{Pasquier}).
It is beneficial to review some basic terminology in more detail at this stage.

\subsection{Quantum Knizhnik-Zamolodchikov equations and scattering matrices}

Consider a collection of complex vector spaces $V_1,\ldots,V_N$, called \emph{local state spaces}, and construct the \emph{global state space} $W = V_1 \otimes \cdots \otimes V_N$.
In this paper we will deal with the case $V_1 = \ldots = V_N =:V$ only\footnote{It is possible that this restriction may be lifted for some of the theory under consideration; we will return to this question in Subsection \ref{sec:outlook}.}.
For $p \in \C^\times := \C \setminus \{0\}$ and $i=1,\ldots,N$, write $p^{\epsilon_i}: (\C^\times)^N \to (\C^\times)^N$ for multiplication by $p$ in the $i$-th entry:
\[ p^{\epsilon_i} \bm z = (z_1,\ldots,z_{i-1},pz_i,z_{i+1},\ldots,z_N), \quad \n{for } \bm z = (z_1,\ldots,z_N) \in (\C^\times)^N. \]
The qKZ equations are the following system of $p$-difference equations for meromorphic functions $f: (\C^\times)^N \to V^{\otimes N}$:
\begin{equation} \label{eqn:qKZ} \quad f(p^{\epsilon_i} \bm z) = A_i(\bm z;p) f(\bm z), \qquad \n{for } i=1,\ldots,N, \end{equation}
for certain \emph{qKZ transport matrices} $A_i(\bm z;p) \in \End(V^{\otimes N})$ depending meromorphically on the $z_i$.
The system \eqref{eqn:qKZ} is consistent if the two possible ways of resolving $f(p^{\epsilon_i} p^{\epsilon_j} \bm z)$ amount to the same; this happens precisely if
\begin{equation} \label{eqn:qKZconsistency}
\quad A_i(p^{\epsilon_j} \bm z;p) A_j(\bm z;p) = A_j(p^{\epsilon_i} \bm z;p) A_i(\bm z;p), \qquad \n{for } i,j=1,\ldots,N.
\end{equation}
Thus, \eqref{eqn:qKZconsistency} can be viewed as a flatness condition for a discrete connection defined by \eqref{eqn:qKZ}.
We assume the $A_i$ are composed of ``local'' operators (i.e.\ acting on one or two tensorands $V$).
The main such ingredient is an R-matrix $R(x) \in \End(V \otimes V)$, i.e.\ a meromorphic solution of the (quantum) \emph{Yang-Baxter equation} (YBE):
\begin{equation} \label{YBE}
R_{12}(x/y) R_{13}(x) R_{23}(y) = R_{23}(y) R_{13}(x) R_{12}(x/y) \in \End(V^{\otimes 3})
\end{equation}
for generic $x,y$.
The subscripts indicate in which tensor factors the operator in question acts nontrivially, also cf.\ Appendix \ref{sec:linearalgebra}.
\eqref{YBE} can be seen as a condition expressing the equivalence of the two possible factorizations of a three-particle interaction into three two-particle interactions.
Then \eqref{eqn:qKZconsistency}, a global condition, follows from the YBE \eqref{YBE} and other local conditions.

In the type B, C and D cases the above R-matrix datum (solutions of the YBE) has to be extended by two operators $K^+(x), K^-(x) \in \End(V)$, meromorphically depending on $x$, the so-called K-matrices.
They are required to satisfy \emph{reflection equations} (REs), also known as boundary Yang-Baxter equations, one corresponding to either of two boundaries present in the system.
The \emph{left and right reflection equations} (LRE, RRE) are the relations
\begin{align}
\label{LRERK} R_{12}(x/y) K^+_1(x) R_{21}(xy) K^+_2(y) &= K^+_2(y) R_{12}(xy) K^+_1(x) R_{21}(x/y) , \\
\label{RRERK} R_{21}(x/y) K^-_1(x) R_{12}(xy) K^-_2(y) &= K^-_2(y) R_{21}(xy)  K^-_1(x) R_{12}(x/y),
\end{align}
both acting on $V^{\otimes 2}$;
here $R_{21}(x) = PR(x)P$ with the ``flip'' $P \in \GL(V^{\otimes 2})$ defined by $P(v \otimes v') = v' \otimes v$ for $v,v'\in V$.
\eqrefs{LRERK}{RRERK} express the equivalence of the two possible factorizations of a two-particle-and-boundary interaction into two one-particle-and-boundary interactions.
We will review the precise expression of the qKZ transport matrices in terms of the R- and K-matrices in Section \ref{sec:inhtfermatsandscatmats}.

In the special case that $p=1$ the consistency condition \eqref{eqn:qKZconsistency} simplifies to the statement that the matrices $A_i(\bm z;1)$ ($1\leq i \leq N$) mutually commute.
The $A_i(\bm z;1)$ correspond to the \emph{scattering matrices} introduced by Yang in his investigations into the delta Bose gas \cite{Yang}.
Scattering matrices predate the qKZ equations and play an important role in quantum integrability (cf.\ \cite[Ch.\ 10]{Gaudin} and \cite{Pasquier}).
Hence we may view \eqref{eqn:qKZconsistency} as a $p$-deformed criterion for integrability.
Moreover, the qKZ equations \eqref{eqn:qKZ} then naturally appear in this story as being deformations of the eigenvector equation for scattering matrices with eigenvalue 1, thus motivating the study of (solutions of) qKZ equations (also see e.g.\ the introductory remarks in \cite{ZJ}).

\subsection{Transfer matrices}
Another main criterion of quantum integrability is Baxter's notion of commuting \emph{transfer matrices} which is at the basis of the quantum inverse scattering method (algebraic Bethe ansatz) as developed by the Faddeev school from the 1980s; for textbook accounts see \cite{Ba,KBI}.
The transfer matrix, originally associated specifically to vertex models from statistical mechanics, is a parameter-dependent linear operator $T(x)$ acting on a state space $W$.
As before, if $W = V^{\otimes N}$, transfer matrices can be built up from local operators in such a way that the integrability criterion (commutavity) can be derived from local integrability conditions like the YBE.
Also, it is possible to introduce an additional dependence on an $N$-tuple of complex numbers (so-called inhomogeneities) into the transfer matrix, yielding inhomogeneous transfer matrices.
In Section \ref{sec:transfermatrices} we study transfer matrices in more detail.

The method of commuting transfer matrices is best understood in the case of systems with periodic or ``closed'' boundary conditions, but for systems with reflecting or ``open'' boundary conditions there is a more elaborate version due to Sklyanin \cite{Sk}, who constructed commuting \emph{boundary transfer matrices} from R- and K-matrices and derived the algebraic Bethe ansatz for special types of R- and K-matrices.
Similar to the setup for the qKZ transport matrices, two K-matrices are required.
One of these can be taken equal to one of the K-matrices featuring in the qKZ transport matrix, say $K^-$; thus it satisfies the RRE \eqref{RRERK}.
In the present context it is crucial that with that choice, the other necessary K-matrix $K'$ is in general not a solution of the LRE \eqref{LRERK}, but a third reflection equation \eqref{DRERK}.

\subsection{The connection between qKZ equations and transfer matrices}

Gaudin \cite[Ch.\ 10]{Gaudin} has highlighted that scattering matrices are proportional to interpolants of inhomogeneous transfer matrices (those with the spectral parameter running through the set of inhomogeneities), in case the underlying affine root system is of type A.
Thus the problem of finding eigenvectors of transfer matrices is related to the problem of finding eigenvectors of scattering matrices, adding to the relevance of the qKZ equations.
For the bqKZ equations, such a connection with inhomogeneous boundary transfer matrices has heretofore been unclear, as observed by Pasquier \cite{Pasquier}; this owes mainly to the fact that the K-matrices for the left boundary appearing in the formulae satisfy different reflection equations.
We address this question in Thm.\ \ref{thm:connection}, thus providing a new motivation for the bqKZ equations.
For the case where $V \cong \C^2$ and the R-matrix is of $U_q(\hat{\f{sl}}_2)$-type this connection was made in \cite{SV}; earlier it was also made in \cite{FW} for K-matrices diagonal in the Cartan basis of $V$.

\subsection{Outline}

Each of the main three sections of this paper is split up in a part about periodic systems and a part on reflecting systems, the former of which consists mainly of a review of existing results which is provided as a background for the new results proposed in the latter.

In Section \ref{sec:transfermatrices} we will discuss transfer matrices for periodic and reflecting systems for general state spaces $W$ (not necessarily tensor products of local state spaces $V_i$).
The new result here addresses an unsatisfactory aspect of the state-of-affairs of quantum integrability for reflecting systems, namely the large number of conditions on the R-matrix datum required to establish the commutativity of the boundary transfer matrices in \cite{Sk}, both compared to the analogon in the periodic case and to the requirements for the consistency of the bqKZ equations.
This problem has already been reduced significantly in \cite{MN} and \cite{FSHY} and in the current work the main improvement (cf.\  Thm.\ \ref{thm:commutingtransfermatrices}) is to show it is unnecessary to assume unitarity or crossing symmetry of the R-matrix, as in the periodic case, essentially leaving the appropriate REs at both boundaries as the only conditions.

From here onwards we will focus on the case $W = V^{\otimes N}$.
In Section \ref{sec:inhtfermatsandscatmats} we will discuss the connection between inhomogeneous transfer matrices and qKZ transport matrices.
For reflecting systems this leads to the main theorem Thm.\ \ref{thm:connection} of this paper, which settles the aforementioned problem identified by Pasquier: it establishes a simple relation between bqKZ transport matrices \cite{Ch1,Ch2,JKKMW} and the inhomogeneous boundary transfer matrices \cite{Sk,MN,FSHY}.
This relies on a careful analysis of the relation between the K-matrices $K^+$ and $K'$, q.v.\ Lemma \ref{lem:refleqns}.

As an application of these relations, in Section \ref{sec:tfermatsrevisited} we will derive the commutativity of transfer matrices from the qKZ consistency conditions for special classes of integrable systems, both periodic and reflecting, yielding a generalization of a result by Razumov, Stroganov and Zinn-Justin \cite{RSZJ}.

Finally, in Section \ref{sec:outlook} we will outline future work and possible generalizations.

\subsection{Some notational conventions}

For a finite-dimensional complex vector space $V$, we will saliently identify $V$ with its dual, so that transposition in $V$ becomes an algebra-antiautomorphism: $\End(V) \to \End(V): X \mapsto X^t$. 
For $X \in \GL(V)$ we write $X^{-t} := (X^{-1})^t = (X^t)^{-1}$.
Let $1 \leq i \leq N$ and $X \in \End(V_1 \otimes \cdots \otimes V_N)$ with $V_i$ finite-dimensional. 
We will write $X^{t_i}$ for the transpose of $X$ with respect to the $V_i$ and $\Tr_i X$ for the trace of $X$ with respect to $V_i$. 
We refer to Appendix \ref{sec:linearalgebra} for more detail on such relative or ``partial'' transposes and traces.\\

Let $\Mer$ denote the associative algebra of meromorphic functions$: \C \to \C$.
We write $\Mer^\times$ for the complement in $\Mer$ of the constant function zero.
Given a complex vector space $V$ (not necessarily finite-dimensional), let $\Mer(V) \cong \Mer \otimes \End(V)$ denote the associative algebra of meromorphic functions$: \C \to \End(V)$.
By $\Mer(V)^\times$ we denote the multiplicative subgroup of $\Mer(V)$ consisting of meromorphic functions ranging in $\End(V)$ whose images are generically invertible.

For $X,Y \in \End(V)$ the notation $X \propto Y$ denotes the equivalence relation $X = m Y$ for some $m \in \C^\times$.
If $X,Y \in \Mer(V)$ then $X(x) \propto Y(x)$ means $X(x) = m(x) Y(x)$ for generic values of $x$ and some $m \in \Mer^\times$.

\subsection{Acknowledgments}

The author was supported in his work by a Free Competition grant (``Double affine Hecke algebras, Integrable Models and Enumerative Combinatorics'') of the Netherlands Organization for Scientific Research (NWO) and by grant EP/L000865/1 (``Topological field theories, Baxter operators and the Langlands programme'') of the Engineering and Physical Science Research Council (EPSRC).
He would like to thank C. Korff, N. Reshetikhin, J. Stokman, R. Weston and P. Zinn-Justin for helpful comments and their interest in this work.

\section{Commuting transfer matrices} \label{sec:transfermatrices}

In this section we compare the requirements for the existence of boundary commuting transfer matrices with the periodic case and formulate an improved commutativity statement.
First we  summarize the method of commuting transfer matrices.

Let $W$ be a complex vector space, called \emph{state space}.
The transfer matrix $T$ is a distinguished element of $\Mer(W)$; the variable on which it depends as a meromorphic function is called the \emph{spectral parameter}.
The condition of integrability for systems modelled in this way is that transfer matrices with different spectral parameters should commute: $[T(x),T(y)]=0$.
The transfer matrix relates directly to the notion of partition function of statistical mechanical models; for quantum mechanical models the importance of the transfer matrix is that the \emph{quantum Hamiltonian} of the model can be expressed in terms of it, typically as the logarithmic derivative of the transfer matrix for a special value of the spectral parameter.

The transfer matrix $T$ can be constructed in terms of the partial trace with respect to a finite-dimensional linear space $V$ called \emph{auxiliary space}. 
The object is inside the trace, an element of $\End(V \otimes W)$, satisfies a quadratic relation (``exchange relation'') in $\End(V \otimes V \otimes W)$ involving an invertible solution of the Yang-Baxter equation.
From this quadratic relation one derives $[T(x),T(y)]=0$.

\subsection{Commuting transfer matrices in the periodic setting}

For periodic integrable systems one is given an operator $U \in \Mer(V \otimes W)$ known as the \emph{monodromy matrix}.
One may construct the transfer matrix $T \in \Mer(W)$ in terms of the partial trace with respect to $V$:
\begin{equation} \label{eqn:transfermatrixper} T_1(x)  := \Tr_0 U_{01}(x), \end{equation}
where we have labelled the auxiliary space $V$ by 0 and the state space $W$ by 1.
We have the following standard result.
\begin{thm}[E.g.\ \cite{Ba,KBI}] \label{thm:commutingtransfermatricesper}
Let $U \in \Mer(V \otimes W)$.
Suppose there exists $R \in \Mer(V^{\otimes 2})^\times$ such that the pair $(R,U)$ satisfies, for generic values of $x, y \in \C$, 
\begin{equation} R_{00'}(x/y)U_{01}(x)U_{0'1}(y) = U_{0'1}(y)U_{01}(x)R_{00'}(x/y) \in \End(V_{(0)} \otimes V_{(0')} \otimes W_{(1)}). \label{YBERU} \end{equation}
Then the transfer matrices defined by \eqref{eqn:transfermatrixper} form a commuting family:
\[ \hspace{40mm} [T(x),T(y)]=0, \qquad \n{for all } x,y \in \dom(T). \]
\end{thm}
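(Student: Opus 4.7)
The plan is to execute the standard RTT argument, taking partial traces of the exchange relation \eqref{YBERU} over both copies of the auxiliary space and using cyclicity to cancel the $R$-factor. First I would note that the hypothesis $R \in \Mer(V^{\otimes 2})^\times$ guarantees the existence of $R_{00'}(x/y)^{-1}$ for generic $x,y$, which is the only input needed from $R$ beyond \eqref{YBERU} itself.

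Next I would rewrite \eqref{YBERU} as
\[ U_{01}(x) U_{0'1}(y) = R_{00'}(x/y)^{-1}\, U_{0'1}(y) U_{01}(x)\, R_{00'}(x/y) \]
and apply $\Tr_{00'}$ to both sides. On the left-hand side, since $U_{01}(x)$ acts trivially on $V_{(0')}$ and $U_{0'1}(y)$ acts trivially on $V_{(0)}$, a short calculation (writing each operator as an elementary tensor sum and separating the two auxiliary traces, as spelled out in Appendix \ref{sec:linearalgebra}) yields
\[ \Tr_{00'}\bigl( U_{01}(x) U_{0'1}(y) \bigr) = \bigl(\Tr_0 U_{01}(x)\bigr)\bigl(\Tr_{0'} U_{0'1}(y)\bigr) = T_1(x) T_1(y). \]
On the right-hand side, cyclicity of the full partial trace $\Tr_{00'}$ (it is a trace over the two-fold auxiliary space $V \otimes V$, where $R_{00'}(x/y)$ lives entirely) moves the leftmost factor $R_{00'}(x/y)^{-1}$ past the $U$-factors to meet $R_{00'}(x/y)$ and annihilate it, leaving $\Tr_{00'}\bigl(U_{0'1}(y) U_{01}(x)\bigr) = T_1(y) T_1(x)$ by the same disjoint-auxiliary-spaces argument.

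Equating the two sides gives $[T(x),T(y)] = 0$ for generic $x,y$, and the identity extends to all $x,y \in \dom(T)$ by meromorphy. The only real subtlety is making sure the cyclicity step is legitimate: one needs $R_{00'}(x/y)^{-1}$ to commute out of the trace past the $U$-factors, which is valid precisely because $R_{00'}(x/y)^{-1}$ acts trivially on $W_{(1)}$ and the trace is being taken over both auxiliary factors simultaneously. I do not anticipate a hard obstacle here; the argument is linear-algebraic bookkeeping with partial traces, and the hypotheses of the theorem have been tailored so that no further structural properties of $R$ (such as unitarity or crossing symmetry) enter.
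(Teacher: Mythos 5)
Your proof is correct and is precisely the standard RTT argument that the paper alludes to by citing \cite{Ba,KBI} without reproducing a proof: multiply \eqref{YBERU} by $R_{00'}(x/y)^{-1}$, take $\Tr_{0,0'}$, use \eqref{productoftraces} to factor the left-hand side into $T(x)T(y)$, and use cyclicity of the partial trace in the form \eqref{commuteinsidetrace} (legitimate here exactly because $R_{00'}$ acts only on the two auxiliary legs being traced) to cancel the $R$-factors on the right. This is also the same scheme the paper follows, in a more elaborate transposed form, for the boundary analogue in Thm.\ \ref{thm:commutingtransfermatrices}.
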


Given an R-matrix $R$, it can be easily checked that compositions of solutions of \eqref{YBERU} acting in different state spaces are again solutions of \eqref{YBERU} with the state space given by the tensor product of the first two state spaces.
Hence we have (see e.g. \cite[Sec.\ 2]{Sk})
\begin{cor} \label{cor:commutingtransfermatricesper2}
Let $W^+, W^-$ be complex vector spaces and let $V$ be a finite-dimensional vector space.
Let $U^\pm \in \Mer(V \otimes W^\pm)$.
Suppose there exists $R \in \Mer(V^{\otimes 2})^\times$ such that $(R,U^\pm)$ satisfies \eqref{YBERU} in $V \otimes V \otimes W^\pm$ for both choices of sign.
Then the transfer matrices $T \in \Mer(W^+ \otimes W^-)$ defined by
\[ T_{12}(x) := \Tr_0 U^+_{01}(x) U^-_{02}(y) \]
form a commuting family:
\[ \hspace{40mm} [T(x),T(y)]=0, \qquad \n{for all } x,y \in \dom(T). \]
\end{cor}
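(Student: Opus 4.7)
The plan is to reduce Corollary \ref{cor:commutingtransfermatricesper2} directly to Theorem \ref{thm:commutingtransfermatricesper} by packaging $U^+$ and $U^-$ into a single monodromy matrix on the combined state space $W := W^+ \otimes W^-$. Concretely, I would set
\begin{equation*}
U_{0,12}(x) := U^+_{01}(x)\, U^-_{02}(x) \in \End(V_{(0)} \otimes W^+_{(1)} \otimes W^-_{(2)}),
\end{equation*}
so that $\Tr_0 U_{0,12}(x)$ is exactly the transfer matrix $T(x)$ in the statement. Once \eqref{YBERU} is verified for the pair $(R,U)$, Theorem \ref{thm:commutingtransfermatricesper} applies verbatim and yields $[T(x),T(y)]=0$.

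The heart of the argument is this verification, which is a short sequence of index-disjoint commutations interleaved with two applications of the hypothesis. Starting from
\begin{equation*}
R_{00'}(x/y)\, U^+_{01}(x)\, U^-_{02}(x)\, U^+_{0'1}(y)\, U^-_{0'2}(y),
\end{equation*}
I would first commute $U^-_{02}(x)$ past $U^+_{0'1}(y)$, which is legitimate because these operators act on mutually disjoint tensor factors ($V_{0}\otimes W^-_{2}$ versus $V_{0'}\otimes W^+_{1}$). Next I would apply \eqref{YBERU} for $(R,U^+)$ to move $R_{00'}(x/y)$ through $U^+_{01}(x)\, U^+_{0'1}(y)$, and then \eqref{YBERU} for $(R,U^-)$ to move it through $U^-_{02}(x)\, U^-_{0'2}(y)$. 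A final disjoint-tensorand swap of $U^+_{01}(x)$ with $U^-_{0'2}(y)$ reassembles the right-hand side as $U_{0',12}(y)\, U_{0,12}(x)\, R_{00'}(x/y)$, which is \eqref{YBERU} for $(R,U)$ with state space $W^+ \otimes W^-$.

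There is no real obstacle here; the argument is pure bookkeeping and relies on exactly two features: that $U^+$ and $U^-$ share the same auxiliary space $V$, so that a single $R$ governs the exchange relation for both, and that their state-space tensorands $W^+$ and $W^-$ are independent, so that auxiliary copies with disjoint state-space labels commute. This is precisely the informal observation made just before the corollary in the excerpt, and the same reasoning extends inductively to any finite number of monodromy factors.
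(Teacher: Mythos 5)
Your proposal is correct and follows exactly the route the paper intends: the paper simply asserts that compositions of solutions of \eqref{YBERU} in different state spaces again satisfy \eqref{YBERU} and then invokes Thm.\ \ref{thm:commutingtransfermatricesper}, while you spell out that ``easily checked'' verification via the two disjoint-tensorand commutations and the two applications of the hypothesis. Nothing is missing.
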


\begin{remark} \label{rem:D}
Typically, in Cor. \ref{cor:commutingtransfermatricesper2} one sets $W^+ \cong \C$ and takes $U^+$ to be a constant $D \in \GL(V)$ such that $(R,D)$ satisfies \eqref{YBERU} in $\End(V \otimes V)$, viz.
\begin{equation} [R(x),D\otimes D] = 0 \in \End(V \otimes V) \label{RDD}. \end{equation}
Then $T$ as given in Cor. \ref{cor:commutingtransfermatricesper2}, i.e.
\[ T_1(x) = \Tr_0 D_0 U^-_{01}(x) \]
can be viewed as the generating function of the integrals of motion of a quantum integrable system with state space $W \cong W^-$.
In this picture $U^-$ represents the contributions of the (particle-particle) interactions in the bulk and $D$ encodes the twist in the periodic boundary conditions, with $D = \Id_V$ pertaining to the special case of untwisted periodic boundary conditions.
\end{remark}

\subsection{Commuting boundary transfer matrices} \label{subsec:boundarytransfermatrices}

The notion of commutative transfer matrices in the reflecting setting was first investigated by Sklyanin in the seminal paper \cite{Sk}.
It involves the \emph{left and right boundary monodromy matrices}\footnote{Where an object in the periodic case has a direct analogon in the reflecting case, the symbol representing the latter will be the same as the former, but set in a calligraphic typeface.} $\ca U^+ \in \Mer(V \otimes W^+)$ and $\ca U^- \in \Mer(V \otimes W^-)$, respectively.
The \emph{boundary transfer matrix} is a generating function $\ca T \in \Mer(W^+ \otimes W^-)$ for the constants of motion of a quantum integrable system with state space $W^+ \otimes W^-$.
Analogous to the formula in Cor. \ref{cor:commutingtransfermatricesper2}, it is given by
\begin{equation} \label{defn:transfermatrixbdy}
\ca T_{12}(x) = \Tr_0 \ca U^+_{01}(x) \ca U^-_{02}(x), \end{equation}
where the product of operators inside the partial trace acts in $V_{(0)} \otimes W^+_{(1)} \otimes W^-_{(2)}$.
In general, for the reflecting case the two factors $\ca U^\pm$ cannot be merged into a single operator satisfying a simple relation involving $R$.

Conditions on $\ca U^\pm$ need to be imposed (cf.\ \cite{Sk,MN,FSHY}) to derive the commutativity of the $\ca T$.
As in the periodic setting these conditions involve an R-matrix $R \in \Mer(V^{\otimes 2})^\times$.
The RRE for the pair $(R,\ca U^-)$ is the relation
\begin{equation} \begin{aligned}
&R_{0'0}(x/y) \ca U^-_{01}(x) R_{00'}(xy) \ca U^-_{0'1}(y) = \\
& \qquad = \ca U^-_{0'1}(y) R_{0'0}(xy) \ca U^-_{01}(x) R_{00'}(x/y) \end{aligned} \qquad \in \End(V_{(0)} \otimes V_{(0')} \otimes W^-_{(1)}
\label{RRERU} \end{equation}
for generic values of $x, y \in \C$,
which is a direct generalization of the RRE \eqref{RRERK} that plays a role in the consistency of the boundary qKZ equations.
The condition we impose on $\ca U^+$ will, however, not be such a generalization of the LRE \eqref{LRERK}.
Assume that $R(x)^{t_1} \in \End(V_{(1)} \otimes V_{(2)})^\times$ for generic values of $x$, which condition we will abbreviate by $R^{t_1} \in \Mer(V^{\otimes 2})^\times$; this rules out the constant solution $R(x) = P$ of \eqref{YBE}.
Now introduce
\begin{equation} \label{eqn:Rtilde} \tilde R(x) := ((R(x)^{t_1})^{-1})^{t_1} . \end{equation}
The \emph{dual reflection equation} (DRE) for the pair $(R,\ca U^+)$ is the relation
\begin{equation} \label{DRERU}
\begin{aligned}
&R_{0'0}(x/y)^{-t}  \ca U^+_{01}(x)^{t_0} \tilde R_{00'}(x y)^t \ca U^+_{0'1}(y)^{t_{0'}} =  \\
& \qquad =\ca U^+_{0'1}(y)^{t_{0'}} \tilde R_{0'0}(x y)^t \ca U^+_{01}(x)^{t_0} R_{00'}(x/y)^{-t}
\end{aligned}\hspace{3mm} \in \End(V_{(0)} \otimes V_{(0')} \otimes W^+_{(1)})
\end{equation}
for generic values of $x, y \in \C$. \\

In the existing derivations of the commutativity of the boundary transfer matrices $[\ca T(x), \ca T(y)]=0$ further conditions have been used.
\begin{itemize}
\item Sklyanin \cite{Sk} imposed the conditions $R_{21}(x) = R(x)$ (P-symmetry) and $R(x)^t = R(x)$ (T-symmetry).
These conditions were replaced by the single condition $R_{21}(x) = R(x)^t$ (PT-symmetry) at the hands of Mezincescu and Nepomechie \cite{MN}.
Fan, Shi, Hou and Yang \cite{FSHY} presented a modification of Sklyanin's argument which did not rely on any such conditions.
\item We emphasize that in all of \cite{Sk,MN,FSHY} the following condition, called \emph{unitarity}, is assumed:
\begin{equation}
R(x) R_{21}(x^{-1}) \propto \Id_{V^{\otimes 2}}. \label{Runitary}
\end{equation}
\item \emph{Crossing symmetry} is the condition that there exists $M \in \GL(V)$ and $r \in \C^\times$ such that
\begin{equation} \label{CS}  M_2^{-1} R_{12}(r^2x)^{-1} M_2 \propto \tilde R_{12}(x) \quad \n{for generic } x, \end{equation}
i.e.
\[ (((R_{12}(r^2x)^{-1})^{t_1})^{-1})^{t_1} \propto M_2 R_{12}(x) M_2^{-1}. \]
One usually also imposes the following compatibility condition:
\begin{equation} [R(x),M \otimes M]=0 \in \End(V \otimes V). \label{RMM} \end{equation}
In \cite{Sk,MN,FSHY} the condition \emph{crossing unitarity}, the result of combining \eqref{Runitary} and \eqref{CS}, is assumed to derive the commutativity of the boundary transfer matrices.
In \cite{Sk, FSHY} only $M \propto \Id_V$ occurs, so that \eqref{RMM} is trivially true.
\end{itemize}

Conditions \eqrefs{Runitary}{RMM} are not as stringent as P- or T-symmetry: they appear naturally in the context of the (finite-dimensional) representation theory of quantum affine algebras $U_q(\hat{\f g})$, see e.g.\ \cite{EFK,FR}.
In particular, if $R$ is an intertwiner of a tensor product of finite-dimensional $U_q(\hat{\f g})$-modules, then we have \eqrefs{CS}{RMM} with $M=\n{diag}(q^{2\rho})$ and $r=q^{h^\vee}$, with $\rho$ the half-sum of positive roots of $\f g$ and $h^\vee$ the dual Coxeter number of $\f g$.
If the $U_q(\hat{\f g})$-modules are also irreducible, \eqref{Runitary} holds.

It would nevertheless be pleasing theoretically if the commutavity of the $\ca T(x)$ can be derived without \eqrefs{Runitary}{RMM}, as can be done in the periodic case, cf.\ Thm.\ \ref{thm:commutingtransfermatricesper}.
This is possible by a natural generalization of the proofs given in \cite{Sk,FSHY}.
\begin{thm} \label{thm:commutingtransfermatrices}
Suppose we have $\ca U^\pm \in \Mer(V \otimes W^\pm)$.
If there exists $R \in \Mer(V^{\otimes 2})^\times$ with $R^{t_1} \in \Mer(V^{\otimes 2})^\times$ such that \eqref{RRERU} and \eqref{DRERU} are satisfied, then the boundary transfer matrices $\ca T(x)$ defined by \eqref{defn:transfermatrixbdy} form a commuting family of operators:
\[ \hspace{40mm} [\ca T(x),\ca T(y)]=0, \qquad \n{for all } x,y \in \dom(T). \]
\end{thm}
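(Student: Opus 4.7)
I would follow the Sklyanin--Mezincescu--Nepomechie--Fan--Shi--Hou--Yang doubled-auxiliary-space strategy, but orchestrate the cancellations so that only the defining identity \eqref{eqn:Rtilde} of $\tilde R$ enters, with no appeal to the unitarity \eqref{Runitary} or crossing \eqref{CS} conditions.

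The starting point is to rewrite $\ca T(x)\ca T(y)$ as a joint partial trace over two auxiliary copies of $V$, say $V_{(0)}$ and $V_{(0')}$,
\[
\ca T(x)\ca T(y) = \Tr_{00'}\bigl(\ca U^+_{01}(x)\ca U^-_{02}(x)\ca U^+_{0'1}(y)\ca U^-_{0'2}(y)\bigr),
\]
and then swap $\ca U^-_{02}(x)$ past $\ca U^+_{0'1}(y)$, which is free since they act on disjoint tensorands. This groups the expression into a ``plus block'' $\ca U^+_{01}(x)\ca U^+_{0'1}(y)$ acting on $V_{(0)}\otimes V_{(0')}\otimes W^+$ and a ``minus block'' $\ca U^-_{02}(x)\ca U^-_{0'2}(y)$ acting on $V_{(0)}\otimes V_{(0')}\otimes W^-$.

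The relations \eqref{RRERU} and \eqref{DRERU} are, respectively, swap rules for the minus and plus blocks, but with an apparent mismatch: the former involves $R$ directly, the latter involves $\tilde R$ under partial transposes in the auxiliary spaces. My approach is to insert the appropriate pair of $R$- and $\tilde R$-factors between the two blocks, apply \eqref{RRERU} to push the minus block through the intermediate $R$-factor, apply \eqref{DRERU}, with the help of partial transposes in $V_{(0)}$ and $V_{(0')}$, to push the plus block through the $\tilde R$-factor, and finally use the elementary identity $\Tr_a(Z) = \Tr_a(Z^{t_a})$ for $a\in\{0,0'\}$ together with \eqref{eqn:Rtilde} to show that, inside the joint trace, the $R$- and $\tilde R$-factors cancel. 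What remains is the same double-trace expression with $x$ and $y$ interchanged, i.e.\ $\ca T(y)\ca T(x)$. The hypothesis $R^{t_1}\in\Mer(V^{\otimes 2})^\times$ guarantees that $\tilde R$ is well defined as a meromorphic family, so every manipulation is valid at generic $x,y$.

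\textbf{Main obstacle.} The delicate step is the partial-transpose bookkeeping: one must track, at every stage, whether each factor has been transposed over $V_{(0)}$, over $V_{(0')}$, or over both, exploit that transposition reverses order only within the tensorand on which it acts, and verify that \eqref{eqn:Rtilde} is precisely tuned so that the surviving $\tilde R$-factor from \eqref{DRERU} annihilates the surviving $R$-factor from \eqref{RRERU} under the joint trace, without any residual ``unitarity defect'' $R(x)R_{21}(x^{-1})$ or crossing term requiring control. It is the would-be non-vanishing of such a residual that, in earlier treatments, was handled by imposing the conditions \eqref{Runitary}--\eqref{RMM}; the content of the theorem is that, with the correct arrangement of partial transposes inside the trace, no such residual appears in the first place.
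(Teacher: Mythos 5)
Your plan reproduces the paper's proof essentially step for step: the double auxiliary trace over $V_{(0)}\otimes V_{(0')}$, commuting the disjoint plus/minus blocks, inserting $\tilde R_{00'}(xy)^{t_0}R_{00'}(xy)^{t_0}=\Id_{V^{\otimes 2}}$ (together with a second insertion $R_{0'0}(x/y)^{-1}R_{0'0}(x/y)=\Id_{V^{\otimes 2}}$, which your sketch leaves implicit but which is needed to supply the $x/y$-arguments of both \eqref{RRERU} and \eqref{DRERU}), applying the two reflection equations after the appropriate partial transposes, and cancelling the $x/y$-dependent factors under the trace by cyclicity. This is the same approach as the paper's, and the partial-transpose bookkeeping you flag as the main obstacle is indeed the only delicate point; it goes through exactly as you anticipate.
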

\begin{proof}
The following argument holds for generic values of $x,y$.
We have $\ca T(x)_{12} = \Tr_0 \ca U^+_{01}(x)^t \ca U^-_{02}(x)^{t_0}$ due to \eqref{traceoftransposes}.
We have
\begin{align*}
 \ca T_{12}(x)\ca T_{12}(y) &= \Bigl( \Tr_0 \ca U^+_{01}(x)^{t_0} \ca U^-_{02}(x)^{t_0} \Bigr) \Bigl( \Tr_{0'} \ca U^+_{0'1}(y) \ca U^-_{0'2}(y)  \Bigr) \\
&= \Tr_{0,0'} \ca U^+_{01}(x)^{t_0} \ca U^+_{0'1}(y)  \ca U^-_{02}(x)^{t_0}  \ca U^-_{0'2}(y)  \displaybreak[2] \\
&= \Tr_{0,0'} \ca U^+_{01}(x)^{t_0} \ca U^+_{0'1}(y)  \tilde R_{00'}(xy)^{t_0} R_{00'}(xy)^{t_0} \ca U^-_{02}(x)^{t_0}  \ca U^-_{0'2}(y)  \displaybreak[2] \\
&= \Tr_{0,0'} \bigl( \ca U^+_{01}(x)^{t_0}  \tilde R_{00'}(xy)^t \ca U^+_{0'1}(y)^{t_{0'}} \bigr)^{t_{0'}} \bigl( \ca U^-_{02}(x) R_{00'}(xy)  \ca U^-_{0'2}(y) \bigr)^{t_0}.
\end{align*}
by subsequently applying \eqref{productoftraces}, inserting the identity $\tilde R_{00'}(xy)^{t_0} R_{00'}(xy)^{t_0} = \Id_{V^{\otimes 2}}$ and applying \eqref{transposeofproduct}.
Hence, by \eqref{traceoftransposes},
\begin{equation} \label{eqn:intermediate}
\ca T_{12}(x) \ca T_{12}(y) = \Tr_{0,0'} \bigl( \ca U^+_{01}(x)^{t_0}  \tilde R_{00'}(xy)^t \ca U^+_{0'1}(y)^{t_{0'}} \bigr)^{t_{0,0'}} \ca U^-_{02}(x) R_{00'}(xy) \ca U^-_{0'2}(y),
\end{equation}
where we have written $X^{t_{0,0'}} = (X^{t_0})^{t_{0'}}$.
Inserting the identity $R_{0'0}(\tfrac{x}{y})^{-1} R_{0'0}(\tfrac{x}{y}) = \Id_{V^{\otimes 2}}$ and using \eqref{transposeofproduct} again, we have
\begin{align*}
\ca T_{12}(x) \ca T_{12}(y) &= \Tr_{0,0'} \bigl( R_{0'0}(\tfrac{x}{y})^{-t} \ca U^+_{01}(x)^{t_0}  \tilde R_{00'}(xy)^t \ca U^+_{0'1}(y)^{t_{0'}} \bigr)^{t_{0,0'}} \cdot \\
& \hspace{30mm} \cdot R_{0'0}(\tfrac{x}{y})  \ca U^-_{02}(x) R_{00'}(xy) \ca U^-_{0'2}(y).
\end{align*}
Applying the reflection equations \eqref{RRERU} and \eqref{DRERU}, as well as \eqrefs{transposeofproduct} {commuteinsidetrace}, we obtain
\begin{align*}
\ca T_{12}(x) \ca T_{12}(y) &= \Tr_{0,0'}  \bigl( \ca U^+_{0'1}(y)^{t_{0'}} \tilde R_{0'0}(xy)^t \ca U^+_{01}(x)^{t_0}  R_{00'}(\tfrac{x}{y})^{-t}  \bigr)^{t_{0,0'}} \cdot \\
& \hspace{30mm} \cdot \ca U^-_{0'2}(y)R_{0'0}(xy) \ca U^-_{02}(x) R_{00'}(\tfrac{x}{y}) \\
&= \Tr_{0,0'} \bigl( \ca U^+_{0'1}(y)^{t_{0'}} \tilde R_{0'0}(xy)^t \ca U^+_{01}(x)^{t_0}   \bigr)^{t_{0,0'}} \ca U^-_{0'2}(y)R_{0'0}(xy) \ca U^-_{02}(x). \end{align*}
This equals $\ca T_{12}(y) \ca T_{12}(x)$ by virtue of \eqref{eqn:intermediate}.
Because the meromorphic function $[\ca T(x),\ca T(y)]$ is zero for generic $x,y$, it must be zero for all $x,y \in \dom(\ca T)$.
\end{proof}

\begin{remark}
Thm.\ \ref{thm:commutingtransfermatrices} shows that the conditions required for commutativity of transfer matrices for integrable models with state space $W^+ \otimes W^-$ are no more stringent in the reflecting case than the periodic, cf.\ Cor.\ \ref{cor:commutingtransfermatricesper2}.
In both cases the transfer matrix is the partial trace over an auxiliary space $V$ of a composition of two operators, each of which is required to satisfy a quadratic relation involving an object $R \in \Mer(V \otimes V)^\times$.
No further assumptions need to be made, apart from the very weak condition $R^{t_1} \in \Mer(V \otimes V)^\times$ in the reflecting case.
\end{remark}

\subsubsection{Double-row boundary monodromy matrices}
The following proposition gives a canonical way of constructing boundary monodromy matrices $\ca U^\pm \in \Mer(V \otimes W^\pm)$ from ``ordinary'' monodromy matrices $U^\pm \in \Mer(V \otimes W^\pm)$ (global solutions to the YBE) and K-matrices $K', K^- \in \Mer(V)$ (local solutions to the appropriate REs).
\begin{prop}[{cf. \cite[Prop.\ 2 and 3]{Sk}}] \label{folding}
Let $K', K^- \in \Mer(V)$ and let $U^\pm \in \Mer(V \otimes W^\pm)^\times$.
Define $\ca U^\pm \in \Mer(V_{(0)} \otimes W^\pm_{(1)})$ by means of
\begin{align*}
\ca U^+_{01}(x) &:= \bigl( \bigl(U^+_{01}(x^{-1})^{-1}\bigr)^{t_0} K'_0(x)^t  U^+_{01}(x)^{t_0} \bigr)^{t_0} \\
\ca U^-_{01}(x) &:=  U^-_{01}(x^{-1})^{-1} K^-_0(x) U^-_{01}(x)
\end{align*}
and write $U \in \Mer(V \otimes W^+ \otimes W^-)^\times$ for the operator defined by $U_{012}(x)=U^-_{02}(x)U^+_{01}(x)$.
Writing $W = W^+ \otimes W^-$, the boundary transfer matrices $\ca T \in \Mer(W)$ defined by \eqref{defn:transfermatrixbdy} satisfy
\begin{equation} \label{eqn:foldedtransfermatrix} \ca T_1(x) = \Tr_0 K'_0(x) U_{01}(x^{-1})^{-1} K^-_0(x)U_{01}(x), \end{equation}
where the operator inside the partial trace acts in $V_{(0)} \otimes W_{(1)}$.

Furthermore, if there exists $R \in \Mer(V^{\otimes 2})^\times$ with $R_{12}^{t_1} \in \Mer(V^{\otimes 2})^\times$ such that $(R,K')$ satisfies the DRE in $V \otimes V$, viz.
\begin{equation} \label{DRERK}
R_{12}(x/y)^{-1}  K'_1(x) \tilde R_{21}(x y)K'_2(y) = K'_2(y) \tilde R_{12}(x y) K'_1(x)  R_{21}(x/y)^{-t},  \end{equation}
$(R,K^-)$ satisfies the RRE \eqref{RRERK} and $(R,U)$ satisfies the YBE \eqref{YBERU},
then the $\ca T$ form a commuting family: $[\ca T(x),\ca T(y)]=0$.
\end{prop}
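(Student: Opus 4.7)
My plan is to split the proof into two parts: first derive the alternative expression \eqref{eqn:foldedtransfermatrix} for $\ca T$ by a direct calculation, then reduce the commutativity claim to Theorem \ref{thm:commutingtransfermatrices}.

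For \eqref{eqn:foldedtransfermatrix}, I start by undoing the outer $t_0$ in the definition of $\ca U^+$. Because $t_0$ is an antihomomorphism on the $0$-slot of $\End(V \otimes W^+)$ and $(K'_0(x)^t)^{t_0} = K'_0(x)$, the definition unfolds to
\[
\ca U^+_{01}(x) \;=\; U^+_{01}(x)\, K'_0(x)\, U^+_{01}(x^{-1})^{-1}.
\]
Substituting this and the definition of $\ca U^-$ into \eqref{defn:transfermatrixbdy}, cycling $U^+_{01}(x)$ around $\Tr_0$, and then recognizing $U^+_{01}(x^{-1})^{-1} U^-_{02}(x^{-1})^{-1} = U_{012}(x^{-1})^{-1}$ and $U^-_{02}(x) U^+_{01}(x) = U_{012}(x)$, produces \eqref{eqn:foldedtransfermatrix} after relabelling $W^+ \otimes W^-$ as the single state space $W$ indexed by $1$.

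For commutativity I apply Theorem \ref{thm:commutingtransfermatrices}: it suffices to show that $(R, \ca U^-)$ satisfies the RRE \eqref{RRERU} and that $(R, \ca U^+)$ satisfies the DRE \eqref{DRERU}. Both verifications follow the classical Sklyanin ``sandwich'' strategy. For \eqref{RRERU} I substitute the definition of $\ca U^-$ and use the YBE \eqref{YBERU} for $U^-$, together with its consequence for $U^-(x^{-1})^{-1}$ (obtained by surrounding \eqref{YBERU} by appropriate inverses), to propagate the auxiliary R-matrices through the bulk factors until they bracket only the $K^-$-core; applying the RRE \eqref{RRERK} for $K^-$ swaps the K-matrices, after which reversing the YBE moves restores the R-matrices to their outer positions. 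For \eqref{DRERU} I proceed analogously on the transposed side, starting from
\[
\ca U^+_{01}(x)^{t_0} = \bigl(U^+_{01}(x^{-1})^{-1}\bigr)^{t_0}\, K'_0(x)^t\, U^+_{01}(x)^{t_0},
\]
and using partially-transposed, spectrally-inverted incarnations of \eqref{YBERU} to commute $R_{00'}(x/y)^{-t}$ and $\tilde R_{00'}(xy)^t$ past the bulk $U^+$-blocks, leaving the $K'$-core exposed for \eqref{DRERK} to act.

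The main technical obstacle is the DRE verification for $\ca U^+$: the R-matrix factors appearing in \eqref{DRERU} and \eqref{DRERK} are not the original $R$ but its partial transposes and $\tilde R$, so the YBE for $U^+$ must be applied in several partially transposed and spectrally inverted incarnations, and $\tilde R$ must be made to emerge precisely via the defining identity \eqref{eqn:Rtilde}. Careful bookkeeping of the partial transposes $t_0, t_{0'}$ using the identities in Appendix \ref{sec:linearalgebra} is required to match the precise form of \eqref{DRERU}; the RRE verification for $\ca U^-$, by contrast, is essentially Sklyanin's original computation.
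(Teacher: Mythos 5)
Your derivation of \eqref{eqn:foldedtransfermatrix} rests on two steps that are each false. First, partial transposition $t_0$ is \emph{not} an antiautomorphism of $\End(V_{(0)}\otimes W^+_{(1)})$: for operators acting nontrivially in the $W^+$-leg as well, $(XY)^{t_0}\neq Y^{t_0}X^{t_0}$ in general (on $\C^2\otimes\C^2$ take $X=E_{12}\otimes E_{12}$ and $Y=E_{21}\otimes E_{21}$; then $(XY)^{t_0}=E_{11}\otimes E_{11}$ while $Y^{t_0}X^{t_0}=E_{11}\otimes E_{22}$). The identity \eqref{transposeofproduct} reverses only factors whose non-$V_0$ components live in \emph{different} legs; here the $W^+$-components of $U^+_{01}(x^{-1})^{-1}$ and $U^+_{01}(x)$ do not commute, so your unfolding $\ca U^+_{01}(x)=U^+_{01}(x)K'_0(x)U^+_{01}(x^{-1})^{-1}$ is wrong --- if it were right, the nested transposes in the definition would be pointless. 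Second, the partial trace $\Tr_0$ is cyclic only against operators confined to leg $0$, cf.\ \eqref{commuteinsidetrace}, so you may not cycle $U^+_{01}(x)$ around $\Tr_0$. These two errors happen to cancel and you land on the correct formula, but that is not a proof. The legitimate route is to use \eqref{Pintrace} to rewrite $\ca U^+_{01}(x)=\Tr_{0'}K'_{0'}(x)U^+_{0'1}(x^{-1})^{-1}P_{00'}U^+_{0'1}(x)$ over an auxiliary leg $0'$, and then apply \eqref{productoftraces} and $\Tr_0 P_{00'}\ca U^-_{02}(x)=\ca U^-_{0'2}(x)$.

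For the commutativity you propose to verify \eqref{RRERU} for $\ca U^-$ and \eqref{DRERU} for $\ca U^+$ separately and then invoke Thm.\ \ref{thm:commutingtransfermatrices} with the original $W^\pm$. This cannot work from the stated hypotheses: the proposition assumes the YBE \eqref{YBERU} only for the \emph{product} $U=U^-U^+$, not for $U^+$ and $U^-$ individually, so neither of your two dressing computations has the input it needs. Moreover the ``DRE coaction'' you appeal to --- that dressing a solution of \eqref{DRERK} by $U^+$ in the transposed sense yields a solution of \eqref{DRERU} --- is a substantial claim left entirely unproven, and it is not a routine rerun of Sklyanin's sandwich: $\ca U^+_{01}(x)^{t_0}$ is a product of three factors whose $W^+$-components do not commute, so R-matrices cannot simply be ``commuted past the bulk blocks'' using transposed YBEs. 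The paper sidesteps both problems: having established \eqref{eqn:foldedtransfermatrix}, it applies Thm.\ \ref{thm:commutingtransfermatrices} with $W^+$ replaced by $\C$ and $W^-$ replaced by $W^+\otimes W^-$, so that the left operator is the bare $K'$, which satisfies \eqref{DRERK} (the $W^+\cong\C$ transpose of \eqref{DRERU}) by hypothesis, while all the dressing is absorbed into the right operator $U_{01}(x^{-1})^{-1}K^-_0(x)U_{01}(x)$, whose RRE \eqref{RRERU} follows from \eqref{RRERK} and the YBE for the combined $U$ by the standard coaction argument. You should adopt that reduction.
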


\begin{proof}
Using \eqref{transposeofproduct} and \eqref{Pintrace} we may re-write $\ca U^+$ as follows
\begin{align*}
\ca U^+_{01}(x) &=  \bigl( \bigl(K'_0(x) U^+_{01}(x^{-1})^{-1}\bigr)^{t_0}  U^+_{01}(x)^{t_0} \bigr)^{t_0} \\
&= \Tr_{0'} P_{00'} K'_0(x) U^+_{01}(x^{-1})^{-1}  U^+_{0'1}(x) \quad = \Tr_{0'} K'_{0'}(x) U^+_{0'1}(x^{-1})^{-1} P_{00'} U^+_{0'1}(x).
\end{align*}
Hence, applying \eqref{productoftraces} we have
\begin{align*}
\ca T_{12}(x) = \Tr_0 \ca U^+_{01}(x) \ca U^-_{02}(x) &= \Tr_{0,0'} K'_{0'}(x) U^+_{0'1}(x^{-1})^{-1} P_{00'} \ca U^-_{02}(x) U^+_{0'1}(x) \\
&= \Tr_{0'} K'_{0'}(x) U^+_{0'1}(x^{-1})^{-1} \bigl( \Tr_0 P_{00'} \ca U^-_{02}(x) \bigr) U^+_{0'1}(x) \\
&= \Tr_{0'} K'_{0'}(x) U^+_{0'1}(x^{-1})^{-1} \ca U^-_{0'2}(x) U^+_{0'1}(x).
\end{align*}
Using the expression for $\ca U^-$ in terms of $U^-$ and $K^-$ we obtain \eqref{eqn:foldedtransfermatrix}.

Note that \eqref{DRERK} is obtained from \eqref{DRERU} by setting $W^+ \cong \C$ and transposing the equation.
Furthermore \eqref{RRERU} is a straightforward consequence of \eqref{RRERK} and \eqref{YBERU}.
Hence, the statement of the commutativity of the $\ca T$ is now a consequence of Thm. \ref{thm:commutingtransfermatrices} with $W^+$ set to $\C$ and $W^-$ set to $W$.
\end{proof}

\begin{remark} \label{rem:Kprime}
We see that in the reflecting case it is also possible to choose $W^+ \cong \C$; unlike in the periodic case the associated solution $K'$ of the relation in $V \otimes V \otimes W^+$ should not be assumed to be constant.
The transfer matrix $\ca T$ is a generating function of integrals of motion of a quantum system interacting with two boundaries; the remaining boundary monodromy matrix $\ca U^-$ represents the interactions in the bulk and at the right boundary, and $K'$ is associated to the interactions at the left boundary.
This choice of separating one of the boundaries in this way matches the standard embedding of the finite Dynkin diagram of type C into the corresponding affine diagram, see Fig. \ref{fig:Dynkin}.
A similar parallel may be drawn in the periodic case, cf. Rem. \ref{rem:D}, with the finite and affine Dynkin diagrams of type A.
\begin{figure}[h]
\caption{Affine Dynkin diagrams for type A and C, respectively. The associated finite diagrams are obtained by removing the black dots and the incident edges.}
\label{fig:Dynkin}
\includegraphics[width=150mm,clip=true,trim=32mm 240mm 0 40mm]{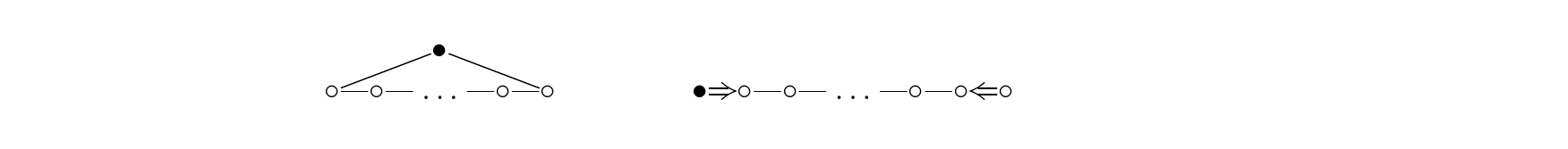}
\end{figure}
\end{remark}

\emph{Boundary regularity} for $K \in \Mer(V)$ is the ``initial condition''
\begin{equation} K( \pm 1) \propto \Id_V. \label{Kregular}
\end{equation}
From Prop. \eqref{folding} we immediately derive
\begin{lem} \label{lem:transfermatrixatone}
Let $K', K^- \in \Mer(V)$ and $U \in \Mer(V \otimes W)$, and let $\ca T$ be given by \eqref{eqn:foldedtransfermatrix}.
If $K^-$ satisfies \eqref{Kregular} and if $\pm 1 \in \dom(K')$, then
\[ \ca{T}(\pm 1) \propto \bigl( \Tr K'(\pm 1) \bigr) \Id_W. \]
\end{lem}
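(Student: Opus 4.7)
The plan is to evaluate the explicit formula \eqref{eqn:foldedtransfermatrix} from Proposition \ref{folding} directly at $x = \pm 1$, exploiting the fact that $(\pm 1)^{-1} = \pm 1$ so that the two factors $U_{01}(x^{-1})^{-1}$ and $U_{01}(x)$ collapse to inverses of the same operator. The only thing standing between them in the integrand is $K^-_0(x)$, and by the boundary regularity assumption $K^-_0(\pm 1) \propto \Id_{V_{(0)}}$, which is a scalar in the auxiliary slot and hence commutes with $U_{01}(\pm 1)$.

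More concretely, I would proceed as follows. Write $c \in \C^\times$ for the scalar satisfying $K^-(\pm 1) = c\,\Id_V$. Starting from
\[ \ca T_1(\pm 1) = \Tr_0 K'_0(\pm 1)\, U_{01}(\pm 1)^{-1}\, K^-_0(\pm 1)\, U_{01}(\pm 1), \]
pull the scalar operator $c\,\Id_{V_{(0)}}$ out of the partial trace, then cancel $U_{01}(\pm 1)^{-1} U_{01}(\pm 1) = \Id_{V \otimes W}$, which is legitimate because $U \in \Mer(V \otimes W)^\times$ ensures generic invertibility and we assumed $\pm 1 \in \dom(K')$ (hence we also need $\pm 1 \in \dom(U)$ and $U_{01}(\pm 1)$ invertible, which one tacitly requires for $\ca T$ to be defined at $\pm 1$; if this fails, the statement is vacuous there). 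This leaves
\[ \ca T_1(\pm 1) = c\, \Tr_0 K'_0(\pm 1). \]

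Finally, since $K'_0(\pm 1)$ acts as $K'(\pm 1) \otimes \Id_W$ on $V \otimes W$, the partial trace over the auxiliary factor gives $\Tr_0 K'_0(\pm 1) = (\Tr K'(\pm 1))\,\Id_W$, yielding
\[ \ca T(\pm 1) = c\,(\Tr K'(\pm 1))\,\Id_W \propto (\Tr K'(\pm 1))\,\Id_W, \]
as required. There is no substantive obstacle: the argument is essentially a one-line computation once the formula \eqref{eqn:foldedtransfermatrix} is in hand, with the only mild subtlety being the bookkeeping of which slot each operator acts in when invoking \eqref{Pintrace}-type identities for partial traces of operators supported only in the auxiliary space.
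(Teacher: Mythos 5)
Your proof is correct and is precisely the ``immediate'' derivation the paper intends: the paper states the lemma follows at once from Proposition \ref{folding}, and your computation --- setting $x=\pm 1$ so that $x^{-1}=x$, using $K^-(\pm 1)\propto \Id_V$ to pull out a scalar and cancel $U_{01}(\pm 1)^{-1}U_{01}(\pm 1)$, then evaluating $\Tr_0 K'_0(\pm 1)=(\Tr K'(\pm 1))\Id_W$ --- is exactly that argument. No gaps.
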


\section{Inhomogeneous transfer matrices and scattering matrices} \label{sec:inhtfermatsandscatmats}
We will now review Gaudin's observation that the operator appearing in the qKZ equations of type A and inhomogeneous transfer matrices for periodic systems are related and prove our main theorem, which is the analogon of this for the reflecting case.
The various statements are most naturally expressed in the language of Heisenberg spin chains.
Integrable inhomogeneous Heisenberg spin chains are obtained when the state space $W$ is a tensor product of ``local'' state spaces $V_i$, each connected to the states of one the spins in the chain.
We will also associate to each tensor factor a parameter $z_i \in \C^\times$ ($1 \leq i \leq N$), called an \emph{inhomogeneity}.

\subsection{Periodic systems}
As in the introduction, we will focus on the special case $W = V^{\otimes N}$ for some positive integer $N$, and assume the monodromy matrix $U$ factorizes into a product of R-matrices.
This yields
\begin{prop}[E.g.\ \cite{Ba,KBI}] \label{spinchain}
Let $R \in \Mer(V^{\otimes 2})$ and $\bm z = (z_1,\ldots,z_N) \in (\C^\times)^N$.
Suppose $R$ satisfies the YBE \eqref{YBE}.
Define $U \in \Mer(V \otimes V^{\otimes N})$ by
\[ U_{0}(x) = U_{0}(x;\bm z) :=R_{0N}(\tfrac{x}{z_N}) \cdots R_{01}(\tfrac{x}{z_1}), \]
where we have labelled the first tensor factor $V$ by 0 and suppressed the labels $1,\ldots,N$ of the remaining tensor factors.
Then \eqref{YBERU} holds.
\end{prop}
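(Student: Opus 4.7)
The statement is the standard ``railroad'' (or train) argument: we push the auxiliary R-matrix $R_{00'}(x/y)$ through the product $U_0(x)U_{0'}(y)$ one slot at a time, using the YBE locally at each tensor factor $i$ together with locality commutations between R-matrices acting on disjoint pairs of tensor factors.

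The first step is to extract from \eqref{YBE} a family of local identities indexed by $i\in\{1,\dots,N\}$. Substituting $x\mapsto x/z_i$ and $y\mapsto y/z_i$ in \eqref{YBE}, and noting that $(x/z_i)/(y/z_i)=x/y$, gives
\[ R_{00'}(x/y)\,R_{0i}(x/z_i)\,R_{0'i}(y/z_i) \;=\; R_{0'i}(y/z_i)\,R_{0i}(x/z_i)\,R_{00'}(x/y) \]
in $\End(V_{(0)}\otimes V_{(0')}\otimes V_{(i)})$, where I have relabelled the tensor factors $1,2,3$ of \eqref{YBE} as $0,0',i$. The second preliminary observation is purely combinatorial: if $i\neq j$ then $R_{0i}(u)$ and $R_{0'j}(v)$ act trivially on each other's nontrivial tensor factors, so they commute for all $u,v$.

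With these two ingredients, the main computation proceeds iteratively in $i=N, N-1,\dots,1$. Starting from
\[ R_{00'}(x/y)\,R_{0N}(x/z_N)\cdots R_{01}(x/z_1)\,R_{0'N}(y/z_N)\cdots R_{0'1}(y/z_1), \]
I first use locality to slide $R_{0'N}(y/z_N)$ leftwards past $R_{0,N-1}(x/z_{N-1}),\dots,R_{01}(x/z_1)$ until it sits immediately to the right of $R_{0N}(x/z_N)$. The local YBE identity above then transports $R_{00'}(x/y)$ across the pair $R_{0N}(x/z_N)R_{0'N}(y/z_N)$, leaving this pair in the swapped order $R_{0'N}(y/z_N)R_{0N}(x/z_N)$ on the far left. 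Repeating this for $i=N-1,\dots,1$ produces
\[ R_{0'N}(y/z_N)R_{0N}(x/z_N)\,R_{0',N-1}(y/z_{N-1})R_{0,N-1}(x/z_{N-1})\cdots R_{0'1}(y/z_1)R_{01}(x/z_1)\,R_{00'}(x/y). \]

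The last step is to recognize that the long prefactor equals $U_{0'}(y)U_0(x)$: starting from $U_{0'}(y)U_0(x)=\bigl[\prod_{j=N}^{1} R_{0'j}(y/z_j)\bigr]\bigl[\prod_{i=N}^{1} R_{0i}(x/z_i)\bigr]$, one applies the same locality commutations to bring each $R_{0i}$ next to $R_{0'i}$, yielding exactly the interleaved product above. This establishes \eqref{YBERU}.

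There is no serious obstacle; the only point demanding care is the ordering bookkeeping, i.e.\ verifying that each commutation moves an $R_{\bullet i}$ past an $R_{\bullet j}$ with $i\neq j$ (so locality applies), and that each YBE application is made on the unique triple $(0,0',i)$ of tensor factors where all three arguments fit the form of the substituted \eqref{YBE}.
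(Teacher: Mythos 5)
Your argument is correct and is precisely the standard ``railroad'' computation that the paper delegates to the cited references \cite{Ba,KBI} rather than writing out: the paper gives no proof of Prop.~\ref{spinchain} itself. The two ingredients you isolate (the locally rescaled YBE on the triple $(0,0',i)$, which works because $(x/z_i)/(y/z_i)=x/y$, and the commutation of R-matrices acting on disjoint pairs of tensor legs) are exactly what is needed, and the iterative bookkeeping you describe goes through without gaps.
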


Starting from the datum $(R,D,\bm z)$, where $R \in \Mer(V^{\otimes 2})^\times$, $D \in \End(V)$ and $\bm z \in (\C^\times)^N$, assume \eqref{YBE} and the compatibility condition \eqref{RDD}.
For $W=V^{\otimes N}$, with $U$ given as in Prop.\ \ref{spinchain}, the transfer matrix of an inhomogeneous periodic spin chain is given by, cf.\ \eqref{eqn:transfermatrixper},
\begin{equation} \label{eqn:inhomogeneoustransfermatrixper} T(x) = T(x;\bm z) = \Tr_0 D_0 R_{0N}(\tfrac{x}{z_N}) \cdots R_{01}(\tfrac{x}{z_1}). \end{equation}
From Theorem \ref{thm:commutingtransfermatricesper} and Proposition \ref{spinchain} we derive
\begin{cor} \label{cor:commutinginhtransfermatricesper}
Let $R \in \Mer(V^{\otimes 2})^\times$, $D \in \End(V)$ and $\bm z = (z_1,\ldots,z_N) \in (\C^\times)^N$.
Construct the inhomogeneous transfer matrices according to \eqref{eqn:inhomogeneoustransfermatrixper}.
Suppose the YBE \eqref{YBE} and the compatibility condition \eqref{RDD} are satisfied.
Then we have
\[\qquad [ T(x;\bm z), T(y;\bm z)] = 0 \qquad \n{for generic } x,y,\bm z.\]
\end{cor}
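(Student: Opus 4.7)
The proof plan is to assemble the corollary directly from the three ingredients already made available in the text: Proposition \ref{spinchain}, the compatibility remark \ref{rem:D} about the twist $D$, and the ``two-factor'' commutativity statement in Corollary \ref{cor:commutingtransfermatricesper2}. No new computation should be necessary beyond recognizing the inhomogeneous transfer matrix \eqref{eqn:inhomogeneoustransfermatrixper} as an instance of the general boundary-free construction with a constant left factor and a monodromy-matrix right factor.

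First I would set $W^- := V^{\otimes N}$, $W^+ := \C$, and define $U^- \in \Mer(V \otimes W^-)$ by $U^-_0(x) := R_{0N}(x/z_N) \cdots R_{01}(x/z_1)$ and $U^+ := D \in \GL(V) \cong \End(V \otimes W^+)$, viewed as a constant (in $x$) element of $\Mer(V \otimes W^+)$. By Proposition \ref{spinchain} applied to the YBE assumption on $R$, the pair $(R,U^-)$ satisfies the global YBE \eqref{YBERU} in $V \otimes V \otimes W^-$. Separately, the compatibility assumption \eqref{RDD} says $[R_{00'}(x/y), D_0 D_{0'}]=0$, which is precisely \eqref{YBERU} for the pair $(R,U^+)$ since $U^+$ is independent of the spectral parameter; alternatively, as noted in Remark \ref{rem:D}, this is the canonical way to encode the twist. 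Hence both hypotheses of Corollary \ref{cor:commutingtransfermatricesper2} are verified.

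Applying that corollary yields the commutativity of the transfer matrices
\[ T_{12}(x) = \Tr_0 U^+_{01}(x) U^-_{02}(x) = \Tr_0 D_0 \, R_{0N}(\tfrac{x}{z_N}) \cdots R_{01}(\tfrac{x}{z_1}), \]
which, after identifying $W^+ \otimes W^- \cong W^-$, coincides with the inhomogeneous transfer matrix \eqref{eqn:inhomogeneoustransfermatrixper}. Thus $[T(x;\bm z), T(y;\bm z)]=0$ for generic $x,y$, and since $\bm z$ enters only as auxiliary parameters inside the meromorphic family this holds generically in $\bm z$ as well.

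I do not foresee any genuine obstacle; the only point deserving care is the bookkeeping that turns the constant $D$ into a legitimate operator on $V \otimes \C$ so that Corollary \ref{cor:commutingtransfermatricesper2} applies verbatim, and the observation that \eqref{RDD} is then literally \eqref{YBERU} for that trivial state space. Everything else is a direct specialization of results already established.
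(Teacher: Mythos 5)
Your proposal is correct and follows essentially the same route as the paper, which derives the corollary by combining Proposition \ref{spinchain} (the R-matrix product satisfies \eqref{YBERU}) with the two-factor commutativity mechanism of Corollary \ref{cor:commutingtransfermatricesper2} and Remark \ref{rem:D}, taking $W^+\cong\C$ and $U^+=D$. The only cosmetic quibble is that you write $D\in\GL(V)$ where the corollary assumes only $D\in\End(V)$; invertibility is not needed anywhere in the argument.
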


To complete the description of the qKZ equations given in the Introduction for the type A case we define the qKZ transport matrices $A_i(\bm z;p)$ in terms of the R-matrices $R \in \Mer(V^{\otimes 2})^\times$ and the constant linear operator $D\in \GL(V)$ as follows:
\begin{equation} \label{qKZtransportmatrixdefnper}
A_i(\bm z;p) := R_{i \, i-1}(\tfrac{p z_i}{z_{i-1}}) \cdots R_{i1}(\tfrac{p z_i}{z_1}) D_i R_{Ni}(\tfrac{z_N}{z_i})^{-1} \cdots R_{ i+1 \, i}(\tfrac{z_{i+1}}{z_i})^{-1} . \end{equation}
For these operators we have (see, e.g.\ \cite[Thm.\ 5.4]{FR}) the following statement.
\begin{prop} \label{prop:qKZconsistencyper}
Suppose we have $R \in \Mer(V^{\otimes 2})^\times$ and $D \in \End(V)$ satisfying \eqref{YBE} and \eqref{RDD} and let $A_i(\bm z;p)$ be given by \eqref{qKZtransportmatrixdefnper}.
Then the consistency conditions \eqref{eqn:qKZconsistency} hold true.
\end{prop}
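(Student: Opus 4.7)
The equation is trivial when $i=j$, so we may assume $i \neq j$; by the symmetry of \eqref{eqn:qKZconsistency} under swapping $i$ and $j$ we may further assume $i<j$. My approach is a direct verification: I transform $A_i(p^{\epsilon_j} \bm z;p) A_j(\bm z;p)$ into $A_j(p^{\epsilon_i} \bm z;p) A_i(\bm z;p)$ by repeated application of the YBE \eqref{YBE} together with the compatibility condition \eqref{RDD}.

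First I would isolate where the shifts $p^{\epsilon_i}, p^{\epsilon_j}$ do something nontrivial. Decomposing $A_k(\bm z;p) = L_k(\bm z) D_k R_k(\bm z)^{-1}$, with $L_k = R_{k\,k-1}(\tfrac{pz_k}{z_{k-1}})\cdots R_{k1}(\tfrac{pz_k}{z_1})$ and $R_k = R_{k+1\,k}(\tfrac{z_{k+1}}{z_k})\cdots R_{Nk}(\tfrac{z_N}{z_k})$, one observes that for $i<j$ the shift $p^{\epsilon_j}$ changes only the factor $R_{ji}(\tfrac{z_j}{z_i})$ inside $R_i$ into $R_{ji}(\tfrac{pz_j}{z_i})$, while $p^{\epsilon_i}$ changes only the factor $R_{ji}(\tfrac{pz_j}{z_i})$ inside $L_j$ into $R_{ji}(\tfrac{z_j}{z_i})$. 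Thus on each side of \eqref{eqn:qKZconsistency} an ``extra'' R-matrix with subscripts $\{i,j\}$ appears, ready to serve as the central factor in YBE moves.

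The bulk of the argument is then combinatorial rearrangement. Writing both products out as long words in R-matrices and the operators $D_i,D_j$, one uses that R-matrices $R_{ab},R_{cd}$ with $\{a,b\}\cap\{c,d\}=\emptyset$ commute, so only factors whose subscripts meet $\{i,j\}$ need to move. Applying the YBE in the form
\[ R_{ab}(\tfrac{z_a}{z_b}) R_{ac}(\tfrac{z_a}{z_c}) R_{bc}(\tfrac{z_b}{z_c}) = R_{bc}(\tfrac{z_b}{z_c}) R_{ac}(\tfrac{z_a}{z_c}) R_{ab}(\tfrac{z_a}{z_b}) \]
for suitable triples $\{a,b,c\}\subset\{i,j,k\}$ (with $k$ running through the remaining indices), one can slide the $R_{ji}$-type factors past R-matrices with subscripts $\{k,i\}$ and $\{k,j\}$. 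The compatibility \eqref{RDD} is used to move $D_i$ and $D_j$ through $R_{ji}$ factors via $R_{ji}(x)(D_j \otimes D_i) = (D_j \otimes D_i) R_{ji}(x)$. After these moves, both products coincide.

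The main obstacle is not conceptual but organizational: the bookkeeping of which YBE moves to apply in what order is intricate because each side contains $O(N)$ R-matrix factors with various arguments. The key observation that keeps things manageable is that only the $O(N)$ factors with a subscript in $\{i,j\}$ move, while the remaining factors stay inert. A detailed execution of this rearrangement, essentially that of \cite[Thm.\ 5.4]{FR} cited in the statement, then completes the proof.
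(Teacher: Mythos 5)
The paper offers no proof of this proposition at all: it simply cites \cite[Thm.\ 5.4]{FR}, and the argument given there is precisely the YBE-rearrangement you outline. Your plan correctly identifies all the needed ingredients --- that the $p$-shifts only affect the single $R_{ji}$-type factor on each side, that disjoint-index factors commute, the YBE applied to triples $\{i,j,k\}$, and \eqref{RDD} to move $D_i$ and $D_j$ jointly past $R_{ji}$ --- so it is a faithful sketch of the standard proof the paper relies on.
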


\begin{remark}
The local conditions imposed in Cor.\ \ref{cor:commutinginhtransfermatricesper} and Prop.\ \ref{prop:qKZconsistencyper} are manifestly the same.
As we will see in Subsection \ref{sec:inhtfermatsandscatmatsreflecting}, for reflecting integrable systems the situation is more subtle.
\end{remark}

\emph{Regularity} for $R \in \Mer(V^{\otimes 2})$ is the condition
\begin{equation}
R(1) \propto P \label{Rregular};
\end{equation}
note that if \eqref{Rregular} holds true, then by setting $x=1$ in the YBE \eqref{YBE} one obtains the unitarity condition \eqref{Runitary}.

\begin{remark}
The condition \eqref{Rregular} is required to express the Hamiltonian of the homogeneous XXZ spin chain in terms of $\frac{\mathrm d}{\mathrm d x} \log T(x;(1,\ldots,1))|_{x=1}$; this relation is due to Lieb \cite{Lieb}.
\end{remark}

The following result involving Yang's scattering matrices appears to have been observed for the first time by Gaudin \cite[Ch.\ 10]{Gaudin}.
\begin{thm} \label{thm:connectionper}
Assume that $R$ satisfies \eqref{Runitary} and \eqref{Rregular}.
Then the interpolants of the transfer matrix satisfy
\[ \hspace{36mm}  T(z_i;\bm z)  \propto A_i(\bm z;1) \qquad \n{for } i=1,\ldots,N, \]
for generic values of $\bm z$ and hence $[T(x;\bm z),T(y;\bm z)]=0$ for $x,y \in \{z_1,\ldots,z_N\}$.
\end{thm}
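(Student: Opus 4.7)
The plan is to evaluate $T(x;\bm z)$ at the distinguished point $x = z_i$, use regularity to convert the single degenerate R-matrix in the product into the flip $P$, then manipulate the partial trace to collapse the auxiliary space and recognize the result as $A_i(\bm z;1)$. Unitarity will only be needed at the very last step.

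First I would substitute $x = z_i$ into \eqref{eqn:inhomogeneoustransfermatrixper}: the product then contains the factor $R_{0i}(1)$, which by regularity \eqref{Rregular} is proportional to $P_{0i}$. Thus
\[ T(z_i;\bm z) \propto \Tr_0 D_0 R_{0N}(\tfrac{z_i}{z_N}) \cdots R_{0,i+1}(\tfrac{z_i}{z_{i+1}})\, P_{0i}\, R_{0,i-1}(\tfrac{z_i}{z_{i-1}}) \cdots R_{01}(\tfrac{z_i}{z_1}). \]
Next I would shuttle $P_{0i}$ to the right through the factors $R_{0,i-1}, \ldots, R_{01}$, using the standard intertwining $P_{0i} X_{0j} = X_{ij} P_{0i}$ for $j \notin \{0,i\}$. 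This replaces each $R_{0j}(z_i/z_j)$ with $j<i$ by $R_{ij}(z_i/z_j)$ and pushes $P_{0i}$ to the rightmost slot. The newly produced factors $R_{ij}$ with $j<i$ no longer act on slot $0$ and may be pulled outside $\Tr_0$. Using cyclicity together with $P_{0i} D_0 = D_i P_{0i}$ I can likewise pull out $D_i$, leaving
\[ \Tr_0\, P_{0i}\, R_{0N}(\tfrac{z_i}{z_N}) \cdots R_{0,i+1}(\tfrac{z_i}{z_{i+1}}), \]
which by the standard identity $\Tr_0 P_{0i} X_{0\vec j} = X_{i\vec j}$ collapses to $R_{iN}(\tfrac{z_i}{z_N}) \cdots R_{i,i+1}(\tfrac{z_i}{z_{i+1}})$.

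Assembling the pieces,
\[ T(z_i;\bm z) \propto R_{i,i-1}(\tfrac{z_i}{z_{i-1}}) \cdots R_{i1}(\tfrac{z_i}{z_1})\, D_i\, R_{iN}(\tfrac{z_i}{z_N}) \cdots R_{i,i+1}(\tfrac{z_i}{z_{i+1}}). \]
Comparing this with the definition \eqref{qKZtransportmatrixdefnper} of $A_i(\bm z;1)$, the two expressions agree provided $R_{ij}(z_i/z_j) \propto R_{ji}(z_j/z_i)^{-1}$ for each $j>i$; this is exactly the unitarity relation \eqref{Runitary}. Hence $T(z_i;\bm z) \propto A_i(\bm z;1)$. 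The final commutativity assertion $[T(x;\bm z),T(y;\bm z)] = 0$ for $x,y \in \{z_1,\ldots,z_N\}$ then follows either from Prop.\ \ref{prop:qKZconsistencyper} specialized to $p=1$, or directly from Cor.\ \ref{cor:commutinginhtransfermatricesper} by meromorphic continuation from generic $x,y$.

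No single step is genuinely hard: regularity converts one R-matrix into $P$, the flip/trace identities eliminate the auxiliary slot, and unitarity reverses the orientation of the upper half of the chain. The main thing to be careful about is the bookkeeping when transporting $P_{0i}$ through the product and tracking which factors can legitimately be taken outside $\Tr_0$; once that combinatorics is organized cleanly, the identification with $A_i(\bm z;1)$ is immediate.
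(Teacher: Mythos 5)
Your proposal is correct and follows essentially the same route as the paper's proof: regularity turns $R_{0i}(1)$ into $P_{0i}$, the flip is transported through the remaining R-matrices and absorbed by $\Tr_0$ via $\Tr_0 P_{0i}X_{0\vec\jmath}=X_{i\vec\jmath}$, and unitarity converts the factors $R_{ij}(z_i/z_j)$ with $j>i$ into the inverted factors $R_{ji}(z_j/z_i)^{-1}$ appearing in $A_i(\bm z;1)$. The only (cosmetic) difference is that you move $P_{0i}$ to the right whereas the paper moves it to the left; your appeal to ``cyclicity'' of the partial trace is not valid in general but is harmless here, since the step is justified directly by $\Tr_0 X_{0\vec\jmath}P_{0i}=X_{i\vec\jmath}=\Tr_0 P_{0i}X_{0\vec\jmath}$.
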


\begin{proof}
Using \eqref{Runitary} and \eqref{Rregular} the statement is easily obtained by straightforwardly moving the permutation operator $P$ past various $R$-matrices:
\begin{align*}
\hspace{-2mm} T(z_i;\bm z) &\propto \Tr_0 D_0 R_{0N}(\tfrac{z_i}{z_N}) \cdots R_{0 \, i+1}(\tfrac{z_i}{z_{i+1}}) P_{0i}R_{0 \, i-1}(\tfrac{z_i}{z_{i-1}}) \cdots R_{0 1}(\tfrac{z_i}{z_1}) \displaybreak[2] \\
&= \Tr_0 P_{0i} D_i R_{iN}(\tfrac{z_i}{z_N}) \cdots R_{i \, i+1}(\tfrac{z_i}{z_{i+1}}) R_{0 \, i-1}(\tfrac{z_i}{z_{i-1}}) \cdots R_{0 1}(\tfrac{z_i}{z_1}) \displaybreak[2] \\
&= \Bigl( \Tr_0 P_{0i}R_{0 \, i-1}(\tfrac{z_i}{z_{i-1}}) \cdots R_{0 1}(\tfrac{z_i}{z_1}) \Bigr) D_i R_{iN}(\tfrac{z_i}{z_N}) \cdots R_{i \, i+1}(\tfrac{z_i}{z_{i+1}}) \displaybreak[2] \\
&= R_{i \, i-1}(\tfrac{z_i}{z_{i-1}}) \cdots R_{i 1}(\tfrac{z_i}{z_1}) \Bigl( \Tr_0 P_{0i}\Bigr) D_i  R_{iN}(\tfrac{z_i}{z_N}) \cdots R_{i \, i+1}(\tfrac{z_i}{z_{i+1}}) \; \propto A_i(\bm z;1). \qedhere
\end{align*}
\end{proof}

\begin{remark}
For periodic systems, the unitarity condition \eqref{Runitary} is not necessary to derive the qKZ consistency or the commutativity of the transfer matrices, cf.\ Corollaries \ref{cor:commutinginhtransfermatricesper} and \ref{prop:qKZconsistencyper}.
However, because of the inverted appearance of certain R-matrices in $A_i(\bm z;p)$ cf.\ \eqref{qKZtransportmatrixdefnper}, it \emph{is} required for Thm.\ \ref{thm:connectionper}.
Alternatively, we could have chosen to replace the inverted R-matrices $R_{ji}(\tfrac{z_j}{z_i})^{-1}$ in the definition of $A_i(\bm z;p)$ by their ``unitarity-counterparts'' $R_{ij}(\tfrac{z_i}{z_j})$.
With this choice, \eqref{Runitary} would be necessary for the consistency condition of the $A_i$ instead of the proof of $T(z_i;\bm z) \propto A_i(\bm z;1)$.
We argue that with a view of extending Thm.\ \ref{thm:connectionper} to the reflecting case it is more natural to use our choice of $A_i(\bm z;p)$: it will turn out that for the proof of the version of this theorem for reflecting systems both the YBE and regularity are needed (so that unitarity is guaranteed, as well).
\end{remark}

\subsection{Reflecting systems} \label{sec:inhtfermatsandscatmatsreflecting}
Whereas the connection between transfer matrix and qKZ transport matrix has long been understood in the periodic case, in the reflecting case it has been less clear and it is the main purpose of this paper to show that this very connection holds true.
Given the datum $(R,K',K^-,\bm z)$, with $R \in \Mer(V^{\otimes 2})^\times$, $K',K^- \in \Mer(V)$ and $\bm z \in (\C^\times)^N$ such that \eqref{YBE}, \eqref{RRERK} and \eqref{DRERK} are satisfied, cf.\ Prop.\ \ref{folding} the corresponding boundary monodromy matrix $\ca U \in \Mer(V \otimes V^{\otimes N})$ satisfying \eqref{RRERU} is given by
\[ \ca U_0(x;\bm z)  :=  R_{01}(\tfrac{1}{xz_1})^{-1} \cdots R_{0N}(\tfrac{1}{xz_N})^{-1} K^-_0(x) R_{0N}(\tfrac{x}{z_N}) \cdots R_{01}(\tfrac{x}{z_1}); \]
again we have suppressed the labels $1,\ldots,N$ of the factors representing state space.
Next, according to \eqref{defn:transfermatrixbdy} we define the transfer matrix $\ca T \in \Mer(V^{\otimes N})$ of the inhomogeneous Heisenberg spin chain with reflecting boundary conditions as
\begin{equation}
 \ca T(x;\bm z) := \Tr_0 K'_0(x) R_{01}(\tfrac{1}{xz_1})^{-1} \cdots R_{0N}(\tfrac{1}{xz_N})^{-1}  K^-_0(x) R_{0N}(\tfrac{x}{z_N}) \cdots R_{01}(\tfrac{x}{z_1}).
 \label{eqn:inhbtransfermatrix}
\end{equation}
By virtue of Theorem \ref{thm:commutingtransfermatrices} and Prop.\ \ref{spinchain} we have
\begin{cor}
\label{cor:commutinginhbtransfermatrices}
Let $R \in \Mer(V^{\otimes 2})^\times$, $K',K^- \in \Mer(V)$ and $\bm z \in (\C^\times)^N$.
Suppose that $R^{t_1} \in \Mer(V^{\otimes 2})^\times$ and \eqref{YBE}, \eqref{RRERK} and \eqref{DRERK} are satisfied.
Let $\ca T$ be defined by \eqref{eqn:inhbtransfermatrix}.
Then for generic $x,y,\bm z$ we have
\[ [ \ca T(x;\bm z), \ca T(y;\bm z)] = 0.\]
\end{cor}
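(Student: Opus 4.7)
The plan is to derive the corollary by assembling Proposition \ref{spinchain}, Proposition \ref{folding}, and Theorem \ref{thm:commutingtransfermatrices}; no genuinely new computation is needed. The work is almost entirely one of identifying the hypotheses of those earlier results with the data $(R,K',K^-,\bm z)$ at hand.

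First I would set $W^+ \cong \C$, $W^- = V^{\otimes N}$, and define the ordinary monodromy matrix
\[ U_0(x;\bm z) := R_{0N}(\tfrac{x}{z_N}) \cdots R_{01}(\tfrac{x}{z_1}) \in \Mer(V \otimes V^{\otimes N}). \]
By Proposition \ref{spinchain}, the YBE hypothesis \eqref{YBE} on $R$ implies that the pair $(R,U)$ satisfies \eqref{YBERU} in $V \otimes V \otimes V^{\otimes N}$. Together with the assumed reflection equations \eqref{RRERK} for $K^-$ and \eqref{DRERK} for $K'$ (and the invertibility assumptions $R, R^{t_1} \in \Mer(V^{\otimes 2})^\times$), this supplies exactly the data needed to invoke Proposition \ref{folding}.

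Next I would apply Proposition \ref{folding}: with $W^+ \cong \C$ and $U^+ = \Id$, the boundary monodromy matrices $\ca U^\pm$ constructed there satisfy the RRE \eqref{RRERU} and DRE \eqref{DRERU} respectively, and the resulting boundary transfer matrix is given by the folded formula \eqref{eqn:foldedtransfermatrix}, namely
\[ \ca T(x;\bm z) = \Tr_0 K'_0(x) U_{01}(x^{-1})^{-1} K^-_0(x) U_{01}(x). \]
The only small calculation is to observe that
\[ U_{01}(x^{-1})^{-1} = R_{01}(\tfrac{1}{xz_1})^{-1} \cdots R_{0N}(\tfrac{1}{xz_N})^{-1}, \]
obtained by inverting the product of R-matrices and reversing the order, so that this expression coincides with the explicit formula \eqref{eqn:inhbtransfermatrix} for the inhomogeneous boundary transfer matrix. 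Theorem \ref{thm:commutingtransfermatrices}, used inside the proof of Proposition \ref{folding}, then yields $[\ca T(x;\bm z),\ca T(y;\bm z)]=0$ for generic $x,y$, and since $[\ca T(x;\bm z),\ca T(y;\bm z)]$ depends meromorphically on $(x,y,\bm z)$, genericity in all three is automatic from genericity in $x,y$ alone.

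There is essentially no obstacle here: the work was done upstream in Theorem \ref{thm:commutingtransfermatrices} and Proposition \ref{folding}, and the present corollary amounts to the observation that the product-of-R-matrices form of $U$ supplied by Proposition \ref{spinchain} is compatible with those hypotheses. The only point one must be careful about is bookkeeping the order reversal in $U_{01}(x^{-1})^{-1}$ so that the folded transfer matrix formula \eqref{eqn:foldedtransfermatrix} matches the stated expression \eqref{eqn:inhbtransfermatrix} on the nose.
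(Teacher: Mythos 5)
Your proposal is correct and follows essentially the same route as the paper: the paper likewise obtains the corollary by combining Prop.\ \ref{spinchain} (to get \eqref{YBERU} for the product-of-R-matrices monodromy matrix), Prop.\ \ref{folding} (to fold in $K^-$ and $K'$ and match \eqref{eqn:foldedtransfermatrix} with \eqref{eqn:inhbtransfermatrix}), and Thm.\ \ref{thm:commutingtransfermatrices}. Your bookkeeping of the order reversal in $U_{01}(x^{-1})^{-1}$ and the genericity remark are both accurate.
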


Here we provide the explicit formula for the bqKZ transport matrix corresponding to the type C case of Cherednik's works \cite{Ch1,Ch2} (or, equivalently, following \cite{JKKMW}).
The ingredients are $R \in \Mer(V^{\otimes 2})^\times$ and $K^\pm \in \Mer(V)$.
Then the \emph{bqKZ transport matrices} are the operators $\ca{A}_i(\bm z;p) \in \End(V^{\otimes N})$ are given by
\begin{equation} \label{defn:btransportmatrix}
\begin{aligned}
\ca{A}_i(\bm z;p)=& R_{i \, i-1}(\tfrac{p z_i}{z_{i-1}}) \cdots R_{i 1}(\tfrac{p z_i}{z_1}) K^+_i(p^{1/2} z_i) R_{1i}(z_1 z_i) \cdots R_{i-1 \, i}(z_{i-1} z_i) \cdot \\
&  \cdot  R_{i+1 \, i}(z_{i+1} z_i) \cdots R_{Ni}(z_N z_i) K_i^-(z_i) R_{Ni}(\tfrac{z_N}{z_i})^{-1} \cdots R_{i+1 \, i}(\tfrac{z_{i+1}}{z_i})^{-1}. \hspace{-5pt}
\end{aligned}
\end{equation}
The bqKZ equations are given by the system
\begin{equation} \label{eqn:bqKZ} \hspace{30mm} f(p^{\epsilon_i} \bm z) = \ca{A}_i(\bm z;p) f(\bm z) \qquad \n{for } i=1,\ldots,N, \end{equation}
for meromorphic functions $f: (\C^\times)^N \to V^{\otimes N}$.
We have the following statement
\begin{prop}[E.g.\ \cite{Ch1,Ch2}] \label{prop:bqKZconsistency}
Suppose the datum $(R,K^+,K^-)$ satisfies \eqrefs{YBE}{RRERK} and construct the $\ca{A}_i$ as per \eqref{defn:btransportmatrix}.
Then the system \eqref{eqn:bqKZ} is consistent, viz. the $\ca{A}_i$ satisfy the conditions
\begin{equation} \label{eqn:bqKZconsistency}
\hspace{4mm} \ca{A}_i(p^{\epsilon_j} \bm z;p) \ca{A}_j(\bm z;p) = \ca{A}_j(p^{\epsilon_i} \bm z;p) \ca{A}_i(\bm z;p) \qquad \n{for } i,j=1,\ldots,N.
\end{equation}
\end{prop}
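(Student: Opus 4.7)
The natural approach is to realize the bqKZ transport matrices $\ca{A}_i(\bm z; p)$ as the operator image of translation elements in a representation of the affine braid group $B(\tilde C_N)$ on meromorphic $V^{\otimes N}$-valued functions on $(\C^\times)^N$. Since translations mutually commute in the underlying affine Weyl group $W(\tilde C_N)$, and (by Bernstein's lemma) this commutativity lifts to the braid group, the corresponding operators automatically commute, and unwinding this relation in terms of the $\bm z$-dependence yields exactly \eqref{eqn:bqKZconsistency}.

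Concretely, I would first introduce intertwiners $\sigma_0,\sigma_1,\ldots,\sigma_N$ acting on meromorphic $V^{\otimes N}$-valued functions: for $1 \le i \le N-1$, $\sigma_i$ applies $P_{i,i+1}R_{i,i+1}(z_i/z_{i+1})$ to the value and swaps $z_i \leftrightarrow z_{i+1}$ in the argument; the boundary intertwiners $\sigma_0, \sigma_N$ similarly apply baxterized versions of $K^+_1$ and $K^-_N$, combined with the involutions $z_1 \mapsto p/z_1$ and $z_N \mapsto 1/z_N$ on the argument. The baxterized spectral arguments on $K^\pm$ are tuned so that the reduced translation words below reproduce \eqref{defn:btransportmatrix} precisely.

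Next I would verify the defining relations of $B(\tilde C_N)$. The cubic braid relations $\sigma_i\sigma_{i+1}\sigma_i = \sigma_{i+1}\sigma_i\sigma_{i+1}$ for $1 \le i \le N-2$ reduce to the YBE \eqref{YBE} once the $\bm z$-swaps are tracked; the quartic boundary relations $\sigma_0\sigma_1\sigma_0\sigma_1 = \sigma_1\sigma_0\sigma_1\sigma_0$ and $\sigma_{N-1}\sigma_N\sigma_{N-1}\sigma_N = \sigma_N\sigma_{N-1}\sigma_N\sigma_{N-1}$ reduce to the reflection equations \eqref{LRERK} and \eqref{RRERK} respectively; the locality relations $\sigma_i\sigma_j = \sigma_j\sigma_i$ for $|i-j|\ge 2$ are immediate. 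Writing $Y_i$ for the braid lift of the translation by the $i$th standard basis vector, a reduced expression of the shape $Y_i = \sigma_{i-1}\cdots\sigma_1\sigma_0\sigma_1\cdots\sigma_{N-1}\sigma_N\sigma_{N-1}\cdots\sigma_i$ realizes $Y_i$; pulling all the $\bm z$-transformations in this word to the right identifies the operator part with $\ca{A}_i(\bm z; p)$, with the residual $\bm z$-action being precisely the shift $\bm z \mapsto p^{\epsilon_i}\bm z$. The equality $Y_iY_j = Y_jY_i$ then unwinds to \eqref{eqn:bqKZconsistency}.

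The principal obstacle is the verification of the two quartic braid relations. The cubic ones are essentially a rewriting of \eqref{YBE}, but the quartic ones involve four alternating R/K factors whose baxterization arguments must, after the $\bm z$-transformations are collected on one side, precisely match the arguments appearing in \eqref{LRERK} or \eqref{RRERK}; finding the correct baxterization is the delicate point. A purely computational alternative, avoiding the braid-group formalism altogether, would manipulate both sides of \eqref{eqn:bqKZconsistency} directly, using \eqref{YBE}, \eqref{LRERK} and \eqref{RRERK} to slide R-matrices past each other and across the $K^\pm$, and reducing by induction on $|i-j|$ to the adjacent case; that adjacent case is itself effectively the same quartic braid relation in disguise.
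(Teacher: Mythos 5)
The paper gives no proof of this proposition: it is imported verbatim from Cherednik \cite{Ch1,Ch2}, and your affine braid group / commuting translation lattice argument is precisely the mechanism of those cited works, so the approaches coincide. One correction to your sketch: since unitarity \eqref{Runitary} is deliberately not assumed here, the cocycle lives on the braid group rather than the Coxeter group, and the word realizing $Y_i$ must have its trailing generators inverted, $Y_i = \sigma_{i-1}\cdots\sigma_1\sigma_0\sigma_1\cdots\sigma_{N-1}\sigma_N\sigma_{N-1}^{-1}\cdots\sigma_i^{-1}$, which is exactly what produces the inverted factors $R_{Ni}(z_N/z_i)^{-1}\cdots R_{i+1\,i}(z_{i+1}/z_i)^{-1}$ in \eqref{defn:btransportmatrix}; with an all-positive word you would only recover that formula modulo unitarity.
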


\begin{remark}
Note that Cor.\ \ref{cor:commutinginhbtransfermatrices} and Prop.\ \ref{prop:bqKZconsistency} require the same type of conditions; both require the YBE \eqref{YBE}, the RRE \eqref{RRERK} and one additional reflection equation (the DRE \eqref{DRERK} and the LRE \eqref{LRERK}, respectively).
This is a consequence from the new result Thm.\ \ref{thm:commutingtransfermatrices} and ties these two notions of integrability more closely together.
In Lemma \ref{lem:refleqns} we will see that the DRE and the LRE are in fact equivalent (for essentially all R-matrices) and then complete the connection in Thm.\ \ref{thm:connection}.
All of these statements show that there are clear parallels between systems with reflecting boundary conditions and periodic systems.
\end{remark}

\subsubsection{Relations between solutions of reflection equations}
In order to connect the boundary transfer matrices $\ca T(x;\bm z)$ to the bqKZ transport matrices $\ca{A}_i(\bm z;p)$, we need to relate solutions of the DRE \eqref{DRERK}, which appear in $\ca T(x;\bm z)$, to solutions of the LRE, which are used in $\ca{A}_i(\bm z;p)$.
Fix $R \in \Mer(V^{\otimes 2})^\times$.
First we introduce subsets of $\Mer(V)$ defined by the various reflection equations:
\begin{align*}
\n{Refl}^+(R) &:= \left\{ K^+ \in \Mer(V) \mid \text{the LRE } \eqref{LRERK} \text{ holds} \right\}, \\
\n{Refl}^-(R) &:= \left\{ K^- \in \Mer(V) \mid \text{the RRE } \eqref{RRERK} \text{ holds} \right\}, \\
\n{Refl}'(R)& :=  \left\{ K' \in \Mer(V) \mid \text{the DRE } \eqref{DRERK} \text{ holds} \right\}.
\end{align*}
There are several noteworthy relations between these three sets.

Given $J \in \GL(V)$, define the following bijection $\chi_J: \Mer(V)  \to \Mer(V)$:
\[ \chi_J(Y)(x) = J^{-1} Y(x) J \]
for $Y \in \Mer(V)$ and generic $x \in \C$; evidently $\chi_J^{-1} = \chi_{J^{-1}}$.
If for generic $x$ we have
\begin{equation} \label{eqn:Rsigmasigma} \qquad  R_{12}(x) J_1 J_2 = J_1 J_2 R_{21}(x), \end{equation}
then $\chi_J$ restricts to a bijection: $\n{Refl}^+(R) \to \n{Refl}^-(R)$.
Obviously, if $R$ is P-symmetric ($R_{21}=R_{12}$) \eqref{LRERK} and \eqref{RRERK} coincide and then $\chi_J$ permutes the set $\n{Refl}^+(R) = \n{Refl}^-(R)$.
In fact, $J = \Id_V$ is a solution of \eqref{eqn:Rsigmasigma}.

Also, given $M \in \GL(V)$ and $r \in \C^\times$, define the following bijection $\psi_{M,r}: \Mer(V)^\times \to \Mer(V)^\times$:
\[ \psi_{M,r}(Y)(x) = Y(rx)^{-1}M, \qquad \n{with } \psi_{M,r}^{-1}(Y)(x) =  M Y(x/r)^{-1}.\]
Provided crossing symmetry \eqrefs{CS}{RMM} is satisfied, $\psi_{M,r}$ restricts to a bijection: $\n{Refl}^-(R) \cap \Mer(V)^\times \to \n{Refl}'(R) \cap \Mer(V)^\times $, as observed by \cite{Sk,MN,FSHY}.

There is also a (more elaborate) bijection\footnote{This bijection is related to Sklyanin's ``less obvious isomorphism'' \cite[Remark 2]{Sk} for P-symmetric R-matrices} from $\n{Refl}'(R)$ to $\n{Refl}^+(R)$.
More precisely, we will show that there exists an invertible $\C$-linear map $\phi_R: \Mer(V)  \to \Mer(V)$ which restricts to a bijection from $\n{Refl}'(R)$ to $\n{Refl}^+(R)$.
Namely, for $Y \in \Mer(V)$ define $\phi_R(Y) \in \Mer(V)$ by
\[
\phi_R(Y)_1(x) = \Tr_0 Y_0(x) P_{01} R_{01}(x^2)  \qquad \n{for generic } x. \]

Recall the notation $\tilde R$ defined in \eqref{eqn:Rtilde} relevant to the DRE \eqref{DRERK}.
\begin{lem} \label{lem:bijection}
Suppose we have $R \in \Mer(V^{\otimes 2})$ such that $R^{t_1} \in \Mer(V^{\otimes 2})^\times$.
Then $\phi_R$ is bijective; in fact $\phi_R^{-1} = \phi_{\tilde R}$.
\end{lem}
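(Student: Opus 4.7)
The plan is to verify $\phi_{\tilde R} \circ \phi_R = \Id_{\Mer(V)}$ directly; the reverse identity $\phi_R \circ \phi_{\tilde R} = \Id_{\Mer(V)}$ will then follow by substituting $\tilde R$ for $R$, since $\tilde{\tilde R} = R$. This last observation is immediate from the definition of $\tilde{\cdot}$: we have $\tilde R^{t_1} = (R^{t_1})^{-1}$, which is invertible, so $\tilde R^{t_1} \in \Mer(V^{\otimes 2})^\times$, and applying the definition a second time yields $\tilde{\tilde R}{}^{t_1} = (\tilde R^{t_1})^{-1} = R^{t_1}$, whence $\tilde{\tilde R} = R$.

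The fundamental input is the pair of identities
\[ R(x)^{t_1} \, \tilde R(x)^{t_1} = \Id_{V^{\otimes 2}} = \tilde R(x)^{t_1} \, R(x)^{t_1}, \]
both immediate from $\tilde R^{t_1} = (R^{t_1})^{-1}$. In matrix components (writing $R = R(x^2)$, $\tilde R = \tilde R(x^2)$, and suppressing spectral parameters), the first equality unpacks to
\[ \sum_{i, m} R^{iq}_{pm} \, \tilde R^{jm}_{il} = \delta^j_p \, \delta^q_l. \]

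A direct expansion of the definition of $\phi_R$ in components gives
\[ \phi_R(Y)^j_l = \sum_{i, m} Y^i_m \, R^{jm}_{il}, \qquad \phi_{\tilde R}(\phi_R(Y))^j_l = \sum_{p, q} Y^p_q \sum_{i, m} R^{iq}_{pm} \, \tilde R^{jm}_{il}. \]
By the contraction identity above the inner sum collapses to $\delta^j_p \, \delta^q_l$, giving $\phi_{\tilde R}(\phi_R(Y))^j_l = Y^j_l$, as required.

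The main obstacle is notational rather than mathematical: written coordinate-free as
\[ \phi_{\tilde R}(\phi_R(Y))_1(x) = \Tr_{0, 0'} Y_0(x) \, P_{00'} \, R_{00'}(x^2) \, P_{0'1} \, \tilde R_{0'1}(x^2), \]
the composition does not visibly collapse; to avoid indices one has to use the partial-trace and partial-transpose identities of Appendix \ref{sec:linearalgebra} (in the same spirit as the proof of Thm.~\ref{thm:commutingtransfermatrices}) to rewrite the two permutations together with one of the partial traces as a partial transposition, at which point the identity $R_{00'}(x^2)^{t_0} \tilde R_{00'}(x^2)^{t_0} = \Id_{V^{\otimes 2}}$ closes the argument.
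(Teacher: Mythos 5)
Your proof is correct and is essentially the paper's argument written out in matrix components: both reduce to the single contraction identity $R(x)^{t_1}\tilde R(x)^{t_1}=\Id_{V^{\otimes 2}}$ (the definition of $\tilde R$) applied to one of the two compositions, and then obtain the other composition from $\tilde{\tilde R}=R$. The paper carries out the same contraction coordinate-free, using \eqref{productoftraces} and \eqref{Pintrace} to turn the permutations and one partial trace into a partial transposition, exactly as you anticipate in your closing remark.
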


\begin{proof}
Let $Y \in \Mer(V)$ and $x \in \C$ generic.
Owing to \eqref{productoftraces} we have
\begin{align*}
 (\phi_R \circ \phi_{\tilde R} )(Y)_1(x) &= \Tr_0  \phi_{\tilde R}(Y)_0(x) P_{01} R_{01}(x^2)\\
&  = \Tr_{0,0'} Y_{0'}(x) P_{0'0} \tilde R_{0'0}(x^2) P_{01} R_{01}(x^2) \\
&  = \Tr_{0'} P_{0'1} Y_1(x) \Tr_{0} P_{00'} \tilde R_{0'1}(x^2)  R_{01}(x^2).
\end{align*}
Now note that by applying \eqref{Pintrace} we obtain
\[  (\phi_R \circ \phi_{\tilde R} )(Y)_1(x)  = \Tr_{0'} P_{0'1} Y_1(x) (\tilde R_{0'1}(x^2)^{t_{0'}}  R_{0'1}(x^2)^{t_{0'}})^{t_{0'}}  =  Y_1(x), \]
i.e.\ $\phi_R \circ \phi_{\tilde R} = \Id_V$; since $\tilde{\tilde R}=R$ and $R^{t_1} \in \Mer(V^{\otimes 2})^\times$ precisely if $\tilde R^{t_1} \in \Mer(V^{\otimes 2})^\times$, we immediately obtain $\phi_{\tilde R} \circ \phi_R = \Id_V$, which completes the proof.
\end{proof}

\begin{lem} \label{lem:refleqns}
Suppose we have $R \in \Mer(V^{\otimes 2})^\times$ such that $R^{t_1} \in \Mer(V^{\otimes 2})^\times$ and the YBE \eqref{YBE} is satisfied.
Then $\phi_R$ restricts to a bijection: $\n{Refl}'(R) \to \n{Refl}^+(R)$.
\end{lem}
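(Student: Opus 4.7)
The plan is to combine Lemma \ref{lem:bijection} with a direct calculation. Since $\phi_R$ is a bijection on $\Mer(V)$ with inverse $\phi_{\tilde R}$, it suffices to establish the two set-theoretic inclusions $\phi_R(\n{Refl}'(R)) \subseteq \n{Refl}^+(R)$ and $\phi_{\tilde R}(\n{Refl}^+(R)) \subseteq \n{Refl}'(R)$.

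For the first inclusion I would fix $K' \in \n{Refl}'(R)$, set $K^+ := \phi_R(K')$, and substitute $K^+_1(x) = \Tr_a K'_a(x) P_{a1} R_{a1}(x^2)$ and $K^+_2(y) = \Tr_b K'_b(y) P_{b2} R_{b2}(y^2)$ -- using fresh auxiliary labels $a, b$ -- into both sides of the LRE \eqref{LRERK}. This converts the equality into an identity inside a double partial trace $\Tr_{a,b}$. The sequence of manipulations is then: (1) conjugate the permutations $P_{a1}$ and $P_{b2}$ through the neighbouring $R$- and $K'$-factors using the intertwining rules $P_{a1} X_1 = X_a P_{a1}$ and $P_{a1} R_{1j}(u) = R_{aj}(u) P_{a1}$ (and analogously for $P_{b2}$), collecting both permutations at one end of the expression; (2) apply the YBE \eqref{YBE} for $R$ repeatedly to rearrange the remaining $R$-matrices on the set $\{1,2,a,b\}$ into the configuration appearing on one side of the DRE \eqref{DRERK} for $K'$ in spaces $a, b$; (3) at the decisive step insert the identity $\tilde R_{ab}(xy)^{t_a} R_{ab}(xy)^{t_a} = \Id_{V^{\otimes 2}}$, which is immediate from $\tilde R = ((R^{t_1})^{-1})^{t_1}$, thereby producing the $\tilde R$-factor that is present in DRE but absent in LRE; (4) invoke the DRE for $K'$ in spaces $a, b$ and reverse the earlier rearrangement to recognise the RHS of the LRE.

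The second inclusion is obtained by the structurally identical argument, now substituting $\phi_{\tilde R}(K^+)_1(x) = \Tr_a K^+_a(x) P_{a1} \tilde R_{a1}(x^2)$ and analogously for position $2$ into the DRE \eqref{DRERK}, using $\tilde{\tilde R} = R$, and invoking the LRE for $K^+$ at the central step. Since every manipulation in the first part -- trace cyclicity, conjugation by permutations, YBE, insertion of $\tilde R^{t_1} R^{t_1} = \Id$ -- is reversible, an alternative strategy would be to organise the first calculation as a chain of equivalences and thereby obtain both inclusions simultaneously. I anticipate the main obstacle to be the combinatorial bookkeeping within the double partial trace: keeping track of the correct order of the many $R$-matrices, and identifying the precise insertion point for $\tilde R^{t_1} R^{t_1} = \Id$ so that both the YBE rearrangement and the subsequent DRE application act on exactly the intended factors.
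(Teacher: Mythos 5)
Your proposal follows essentially the same route as the paper's proof in Appendix \ref{sec:proofs}: reduce to the two inclusions via Lemma \ref{lem:bijection}, then for each inclusion substitute the trace formula for the image K-matrix, move the permutations and apply the YBE to rearrange, insert $\tilde R^{t_1}R^{t_1}=\Id$ at the decisive point, and invoke the other reflection equation. The only detail worth flagging is that in the second inclusion the rearrangement step cannot use the YBE \eqref{YBE} verbatim, since the factors there are $\tilde R$'s and $R^{-1}$; one needs the derived identity $R_{12}(x/y)^{-1}\tilde R_{13}(x)\tilde R_{23}(y)=\tilde R_{23}(y)\tilde R_{13}(x)R_{12}(x/y)^{-1}$ (the paper's \eqref{YBEtwisted}), which follows from \eqref{YBE} by partial transposition and is consistent with your ``structurally identical'' plan.
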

The long and technical proof of this lemma is given in Appendix \ref{sec:proofs}.
Lemmas \ref{lem:bijection} and \ref{lem:refleqns} will play a key role in the main Theorem \ref{thm:connection} of this paper, linking interpolants of inhomogeneous boundary transfer matrices to bqKZ transport matrices.

\begin{figure}[h]
\caption{The bijections $\phi_R$, $\chi_J$ and $\psi_{M,r}$ relating the solutions of the three reflection equations with the necessary assumptions on $R$.
Strictly speaking, $\psi_{M,r}$ maps between subsets consisting of generically invertible solutions of appropriate reflection equations. }
\label{fig:diagram}
\includegraphics[width=170mm,clip=true,trim=50mm 220mm 0 35mm]{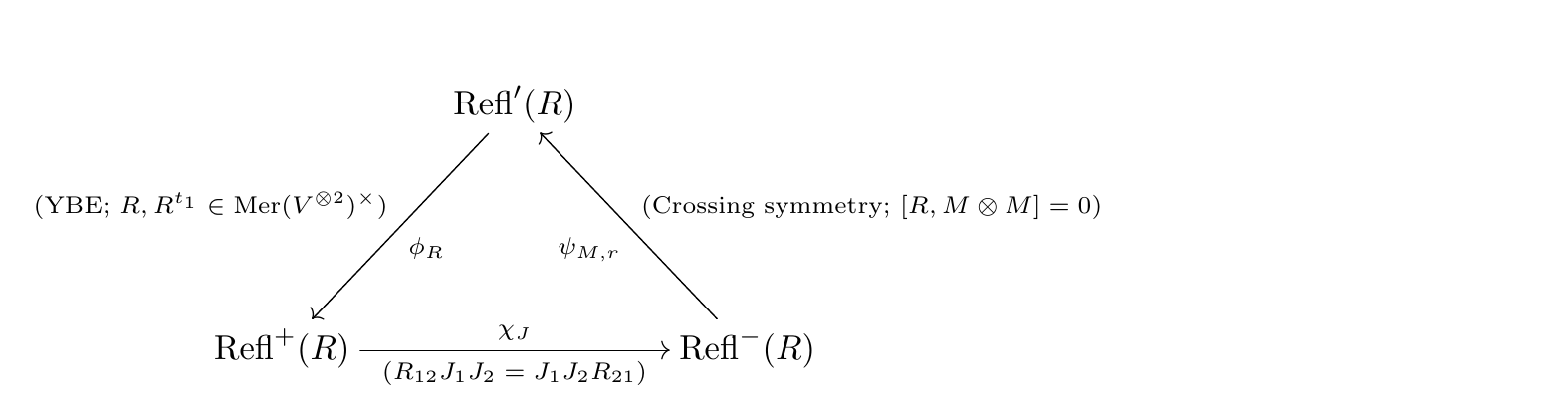}
\end{figure}
The bijections $\phi_R$, $\chi_J$ and $\psi_{M,r}$ are presented diagrammatically in Fig.\ \ref{fig:diagram}.
Evidently if $R$ satisfies all conditions \eqref{YBE}, \eqrefs{CS}{RMM}, \eqref{eqn:Rsigmasigma} and $R, R^{t_1} \in \Mer(V^{\otimes 2})^\times$ we can use this diagram to find a nontrivial bijection from, say, $\n{Refl}^+(R)$ to itself; i.e.\ if $K^{[0]}$ satisfies \eqref{LRERK} then so does $K^{[1]}:=(\phi_{R} \circ \psi_{M,r} \circ  \chi_J )(K^{[0]})$ given by
\[ K^{[1]}_1(x) = \Tr_0 J_0^{-1} K^{[0]}_0(rx)^{-1} J_0 M_0 P_{01} R_{01}(x^2). \]
Such a relation between solutions of a RE is known as a \emph{boundary crossing symmetry}.
E.g.\ for R- and K-matrices associated to the fundamental (vector) representation of $U_q(\hat{\f{sl}}_2)$ see \cite[Eqn. (3.25)]{GZ}, \cite[Eqn. (2.10)]{RSV} and \cite[Eqn. (4.10)]{SV}.

\subsubsection{The main theorem}

The following theorem is the analogon of Thm.\ \ref{thm:connectionper} in the case of reflecting quantum integrable systems.
It provides connections to the bqKZ transport matrices for the interpolants $\ca T(z_i;\bm z)$ and, contrary to the periodic case, also for the interpolants $\ca T(z_i^{-1};\bm z)$.

\emph{Boundary unitarity} for $K \in \Mer(V)$ is the condition that, for generic $x \in \C$,
\begin{equation} \label{Kunitary}
K(x) K(x^{-1}) \propto \Id_V.
\end{equation}
We have
\begin{thm} \label{thm:connection}
Let $R \in \Mer(V^{\otimes 2})^\times$, $K^+,K^- \in \Mer(V)$ and $\bm z \in (\C^\times)^N$.
Assume the YBE \eqref{YBE} and the regularity condition \eqref{Rregular}.
Let $\ca{A}_i$ be given by \eqref{defn:btransportmatrix} and $\ca T$ by \eqref{eqn:inhbtransfermatrix}, where we have written $K'= \phi_{\tilde R}(K^+)$.
Then, for generic values of $\bm z$,
\begin{equation} \label{eqn:transfermatrixqKZmatrix1} \ca T(z_i;\bm z) \propto \ca{A}_i(\bm z;1). \end{equation}
Furthermore, if $K^+,K^-$ satisfy \eqref{Kunitary} then, for generic values of $\bm z$,
\begin{equation} \label{eqn:transfermatrixqKZmatrix2} \ca T(z_i^{-1};\bm z) \propto \ca{A}_i(\bm z;1)^{-1}. \end{equation}
If in addition the REs \eqrefs{LRERK}{RRERK} are satisfied we have $[\ca T(z_i^{\pm 1} ;\bm z),\ca T(z_j^{\pm 1};\bm z)]=0$ for all $1 \leq i,j \leq N$ and all sign choices.
\end{thm}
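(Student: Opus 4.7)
The plan is to verify the three assertions in turn. For \eqref{eqn:transfermatrixqKZmatrix1}, I would follow the template of the periodic Thm.\ \ref{thm:connectionper}, handling the additional ingredients brought on by the reflecting structure. Substituting $x = z_i$ in \eqref{eqn:inhbtransfermatrix}, regularity \eqref{Rregular} identifies $R_{0i}(1) \propto P_{0i}$ in the rightmost product of R-matrices. Since the YBE \eqref{YBE} combined with regularity implies unitarity \eqref{Runitary} (as noted after Thm.\ \ref{thm:connectionper}), each factor in the leftmost (inverted) product rewrites as $R_{0j}(\tfrac{1}{z_iz_j})^{-1} \propto R_{j0}(z_iz_j)$. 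The permutation $P_{0i}$ is then pushed leftward through the expression by the intertwining identities $X_{0k}P_{0i} = P_{0i}X_{ik}$ (for $k \neq i$) and $X_0 P_{0i} = P_{0i} X_i$. Under this manipulation, every factor with auxiliary index $0$ has its index converted to $i$, except for a single residual $R_{0i}(z_i^2)$ produced via the stubborn case $R_{i0}(z_i^2) P_{0i} = P_{0i} R_{0i}(z_i^2)$; also $K^\pm_0 \to K^\pm_i$ and $K'_0 \to K'_i$.

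After reordering using cyclicity in the auxiliary space and the $\End(V^{\otimes N})$-bimodule property of the partial trace $\Tr_0$, the resulting expression takes the schematic form $\Tr_0[P_{0i} \cdot X \cdot K'_i(z_i) \cdot Y \cdot R_{0i}(z_i^2) \cdot Z]$, where $X = R_{i,i-1}(\tfrac{z_i}{z_{i-1}}) \cdots R_{i1}(\tfrac{z_i}{z_1})$ is exactly the prefix of $\ca A_i(\bm z;1)$, $Y = R_{1i}(z_1 z_i) \cdots R_{i-1,i}(z_{i-1} z_i)$ is the middle block, and $Z = R_{i+1,i}(z_{i+1} z_i) \cdots R_{Ni}(z_N z_i) \cdot K^-_i(z_i) \cdot R_{iN}(\tfrac{z_i}{z_N}) \cdots R_{i,i+1}(\tfrac{z_i}{z_{i+1}})$ is (up to unitarity) the remainder of $\ca A_i(\bm z;1)$ after $K^+_i$. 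The final identification then reduces to showing that this trace collapses to $X K^+_i(z_i) Y Z$ up to an overall scalar. For this, I would expand $K'_i(z_i) = \Tr_{0'} K^+_{0'}(z_i) P_{0'i} \tilde R_{0'i}(z_i^2)$ via the definition of $\phi_{\tilde R}$ and invoke the mutual inverse relation $\phi_R \circ \phi_{\tilde R} = \Id$ of Lemma \ref{lem:bijection}: the double partial trace over $0$ and $0'$, equipped with the permutations $P_{0i}$, $P_{0'i}$ and the R-matrices $\tilde R_{0'i}(z_i^2)$, $R_{0i}(z_i^2)$, collapses through the defining identity $\tilde R^{t_1} R^{t_1} = \Id$ exactly as in the proof of Lemma \ref{lem:bijection}, producing $K^+_i(z_i)$ in place of $K'_i(z_i) \cdot R_{0i}(z_i^2)$. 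This collapse step is the principal technical obstacle: it requires careful bookkeeping because $P_{0i}$, $P_{0'i}$, $\tilde R_{0'i}$ and $R_{0i}$ act on overlapping spaces and partial trace is not fully cyclic in the non-auxiliary directions.

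The second assertion \eqref{eqn:transfermatrixqKZmatrix2} is handled by an exactly parallel argument at $x = z_i^{-1}$: now the central factor of the leftmost product is $R_{0i}(1)^{-1} \propto P_{0i}$ by regularity, and unitarity converts each factor of the rightmost product as $R_{0j}(\tfrac{1}{z_iz_j}) \propto R_{j0}(z_iz_j)^{-1}$. Pushing $P_{0i}$ through, using the identities $K^\pm(z_i^{-1}) \propto K^\pm(z_i)^{-1}$ granted by boundary unitarity \eqref{Kunitary}, and invoking the analogous double-trace collapse at the end, yields $\ca T(z_i^{-1};\bm z) \propto \ca A_i(\bm z;1)^{-1}$.

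The commutativity statement then follows immediately. Under the hypothesis that the LRE \eqref{LRERK} and RRE \eqref{RRERK} hold, Prop.\ \ref{prop:bqKZconsistency} specializes at $p = 1$ to $[\ca A_i(\bm z;1), \ca A_j(\bm z;1)] = 0$ for all $1 \leq i, j \leq N$; combined with the proportionalities \eqref{eqn:transfermatrixqKZmatrix1} and \eqref{eqn:transfermatrixqKZmatrix2}, this gives $[\ca T(z_i^{\pm 1};\bm z), \ca T(z_j^{\pm 1};\bm z)] = 0$ for all sign choices.
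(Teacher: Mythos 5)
Your proposal follows essentially the same route as the paper's proof: regularity produces the permutation $P_{0i}$, unitarity (derived from the YBE and regularity) converts the inverted $R$-factors, repeated use of the YBE rearranges the expression, Lemma \ref{lem:bijection} identifies the residual partial trace with $K^+_i(z_i)$, boundary unitarity handles the $x=z_i^{-1}$ case, and Prop.\ \ref{prop:bqKZconsistency} at $p=1$ yields the commutativity. The one point where the paper is slicker than your plan is the final identification: rather than expanding $K'$ as a second partial trace and performing the ``double-trace collapse'' you flag as the principal obstacle, the paper keeps $K'$ in the auxiliary slot so that the residual trace is literally $\Tr_0 K'_0(z_i)\,P_{0i}\,R_{0i}(z_i^2)=\phi_R(K')_i(z_i)=K^+_i(z_i)$ by the definition of $\phi_R$ and the identity $\phi_R\circ\phi_{\tilde R}=\Id$.
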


\begin{proof}
We have, owing to \eqref{Rregular},
\begin{align*}
\ca T(z_i;\bm z) & \propto  \Tr_0 K'_0(z_i) R_{01}(\tfrac{1}{z_i z_1})^{-1} \cdots R_{0\, i\!-\!1}(\tfrac{1}{z_i z_{i\!-\!1}})^{-1} R_{0i}(\tfrac{1}{z_i^2})^{-1} \cdot \\
& \qquad\qquad \cdot  R_{0\, i\!+\!1}(\tfrac{1}{z_i z_{i\!+\!1}})^{-1} \cdots R_{0N}(\tfrac{1}{z_i z_N})^{-1} K^-_0(z_i) \cdot \\
& \qquad\qquad \cdot R_{0N}(\tfrac{z_i}{z_N}) \cdots R_{0 \, i\!+\!1}(\tfrac{z_i}{z_{i+1}}) P_{0i} R_{0 \, i\!-\!1}(\tfrac{z_i}{z_{i\!-\!1}}) \cdots R_{01}(\tfrac{z_i}{z_1}).
\end{align*}
Moving the factors $ P_{0i}$ and $R_{0 \, i\!-\!1}(\tfrac{z_i}{z_{i\!-\!1}}) \cdots R_{01}(\tfrac{z_i}{z_1})$ to the left and applying \eqref{Runitary} (a consequence of \eqref{YBE} and \eqref{Rregular}) we have
\begin{align*}
\ca T(z_i;\bm z) & \propto  \Tr_0 K'_0(z_i) P_{0i} R_{1i}(z_i z_1) \cdots R_{ i\!-\!1 \, i}(z_i  z_{i-1})  R_{0i}(z_i^2)  R_{0 \, i\!-\!1}(\tfrac{z_i}{z_{i\!-\!1}}) \cdots R_{01}(\tfrac{z_i}{z_1})  \cdot \\
& \qquad \cdot R_{ i\!+\!1 \, i}(z_i  z_{i+1}) \cdots R_{Ni}(z_i z_N) K^-_i(z_i) R_{Ni}(\tfrac{z_N}{z_i})^{-1} \cdots R_{ i\!+\!1 \, i}(\tfrac{z_{i+1}}{z_i})^{-1}.
\end{align*}
Now applying the YBE \eqref{YBE} repeatedly in the first line of this expression and moving various factors through $P$, we obtain
\begin{align*}
\ca T(z_i;\bm z) & \propto  R_{i \, i\!-\!1}(\tfrac{z_i}{z_{i\!-\!1}}) \cdots R_{i1}(\tfrac{z_i}{z_1})  \Bigl( \Tr_0  K'_0(z_i) P_{0i} R_{0i}(z_i^2) \Bigr) \cdot \\
& \qquad \cdot  R_{1i}(z_i z_1) \cdots R_{ i\!-\!1 \, i}(z_i  z_{i-1}) R_{ i\!+\!1 \, i}(z_i  z_{i+1}) \cdots R_{Ni}(z_i z_N) \cdot \\
& \qquad \cdot  K^-_i(z_i) R_{Ni}(\tfrac{z_N}{z_i})^{-1} \cdots R_{ i\!+\!1 \, i}(\tfrac{z_{i+1}}{z_i})^{-1}.
\end{align*}
Using Lemma \ref{lem:bijection} we recognize the partial trace as $\phi_R(K')_i(z_i) = K^+_i(z_i)$ and we obtain \eqref{eqn:transfermatrixqKZmatrix1}.
\eqref{eqn:transfermatrixqKZmatrix2} is obtained in a similar fashion, but in this case we also need boundary unitarity \eqref{Kunitary} for $K^\pm$ to match the inverted K-matrices in $\ca{A}_i(\bm z;1)^{-1}$ to the non-inverted ones in $\ca T(z_i^{-1};\bm z)$.
For the final commutativity statement we note that the conditions for Prop.\ \ref{prop:bqKZconsistency} and Lemma \ref{lem:refleqns} are satisfied.
\end{proof}

The most important results of this paper for spin chains are the two main quantum integrablity conditions (qKZ consistency conditions and transfer matrix commutativity) and the minimal conditions on the R-matrix datum required for these, as well asthe relation between the qKZ transport matrix and transfer matrix.
They are summarized for both periodic and reflecting systems in the following table. 

\begin{center}
\begin{tabular}{| m{16mm} | m{32mm} | m{32mm} | m{29mm} |}
\hline
Type of \newline boundary conditions & Commuting \newline transfer matrices & qKZ consistency \newline condition  &  Relation between \newline qKZ transport \newline matrix and \newline transfer matrix \\
\hline
\multirow{3}{*}{Periodic} & Cor.\ \ref{cor:commutinginhtransfermatricesper} & Prop.\ \ref{prop:qKZconsistencyper} & Thm.\ \ref{thm:connectionper} \\
& $[T(x;\bm z),T(y;\bm z)]=0$
& $A_i(p^{\epsilon_j}\bm z€;p) A_j(\bm z;p) = A_j(p^{\epsilon_i}\bm z;p) A_i(\bm z;p)$
& $T(z_i;\bm z) \propto A_i(\bm z;1)$ \\[2pt]
& {\small \begin{tabular}{ll} $\hspace{-2mm}  \bullet \hspace{-2mm} $ & YBE \\ $\hspace{-2mm}  \bullet \hspace{-2mm}$ & $[R(x),D \otimes D]=0$ \end{tabular} }
&  {\small \begin{tabular}{ll} $\hspace{-2mm}  \bullet \hspace{-2mm} $ & YBE \\ $\hspace{-2mm}  \bullet \hspace{-2mm}$ & $[R(x),D \otimes D]=0$ \end{tabular} }
& {\small \begin{tabular}{ll} $\hspace{-2mm}  \bullet \hspace{-2mm} $ & Regularity for $R$ \\ $\hspace{-2mm}  \bullet \hspace{-2mm}$ & Unitarity for $R$ \end{tabular} }   \\
\hline
\multirow{3}{*}{Reflecting} & Cor.\ \ref{cor:commutinginhbtransfermatrices} & Prop.\ \ref{prop:bqKZconsistency} & Thm.\ \ref{thm:connection} \\
& $\! [\ca T(x;\bm z),\ca T(y;\bm z)]=0\!$
& $\ca{A}_i(p^{\epsilon_j}\bm z;p) \ca{A}_j(\bm z;p) = \ca{A}_j(p^{\epsilon_i}\bm z;p) \ca{A}_i(\bm z;p)$
& $\ca T(z_i;\bm z) \propto \ca{A}_i(\bm z;1)$  \\
& {\small \begin{tabular}{ll} $\hspace{-2mm}  \bullet \hspace{-2mm} $ & YBE \\ $\hspace{-2mm}  \bullet \hspace{-2mm}$ & RRE, DRE \end{tabular} }
& {\small \begin{tabular}{ll} $\hspace{-2mm}  \bullet \hspace{-2mm} $ & YBE \\ $\hspace{-2mm}  \bullet \hspace{-2mm}$ & RRE, LRE \end{tabular} }
& {\small \begin{tabular}{ll} $\hspace{-2mm}  \bullet \hspace{-2mm}$ & Regularity for $R$ \\ $\hspace{-2mm}  \bullet \hspace{-2mm} $ & YBE \end{tabular} }  \\
\hline
\end{tabular}
\end{center}

\vspace{2mm}

Clearly there is now greater similarity in the required conditions on the R-matrix datum, both comparing between the type of boundary conditions (periodic vs. reflecting) and the type of integrability criterion (commuting transfer matrices vs. qZK consistency condition).

\begin{remark}
Note that, as in the periodic case, the condition \eqref{Runitary} is not necessary to derive the consistency condition $\ca{A}_i(p^{\epsilon_j} \bm z;p) \ca{A}_j(\bm z;p) = \ca{A}_j(p^{\epsilon_i} \bm z;p) \ca{A}_i(\bm z;p)$ or the commutativity $[\ca T(x;\bm z),\ca T(y;\bm z)]=0$; however it is required for the relation $\ca T(z_i;\bm z) \propto \ca{A}_i(\bm z;1)$, where it is a consequence of the explicitly assumed YBE \eqref{YBE} and the regularity condition \eqref{Rregular}.
\end{remark}

\section{The commutativity of transfer matrices revisited} \label{sec:tfermatsrevisited}
Thms. \ref{thm:connectionper} and \ref{thm:connection} can be wielded to recover the commutativity statements Thms. \ref{thm:commutingtransfermatricesper} and \ref{thm:commutingtransfermatrices} for inhomogeneous transfer matrices built up out of special classes of solutions of the local integrability conditions.
It is then possible (cf.\ \cite[Prop.\ 4]{RSZJ}) to deduce information about the ground state of certain quantum and statistical mechanics models if the parameter $p$ assumes special values.

The class of solutions we will have in mind is based on the image of the universal R-matrix of $U_q(\hat{\f{sl}}_n)$ in the tensor square of its fundamental (i.e.\ $n$-dimensional) representation.
In particular, let $V = \C^n$ for $n \in \Z_{>1}$ and consider the standard ordered orthonormal basis $(v_\alpha)_{\alpha=1}^n$ of $\C^N$, i.e.\ the $\alpha$-th entry of $v_\alpha$ is 1 and all other entries of $v_\alpha$ are 0.
Fix $q \in \C^\times$ and define $R \in \Mer(V^{\otimes 2})^\times$ by
\[ R(x) \cdot (v_\alpha \otimes v_\beta) = \begin{cases} q(1-x) v_\alpha \otimes v_\beta + (1-q^2)x v_\beta \otimes v_\alpha, & \alpha < \beta, \\ (1-q^2x) v_\alpha \otimes v_\alpha, & \alpha = \beta, \\  (1-q^2) v_\beta \otimes v_\alpha + q(1-x) v_\alpha \otimes v_\beta , & \alpha > \beta. \end{cases} \]
We immediately see that $R$ is a polynomial element of $\Mer(V^{\otimes 2})$ of degree 1 satisfying \eqref{Rregular}.
Using this we can directly check the YBE \eqref{YBE}; it is of course an existing result, cf.\ e.g.\ \cite{Bazh}.

\subsection{Periodic systems}

As an application of Thm.\ \ref{thm:connectionper} we will now present a novel proof of the commutativity of the inhomogeneous transfer matrices $t$ for special classes of R-matrices and associated solutions $D$ of \eqref{RDD}.

Let $R$ be as above and let $D \in \GL(V)$ be diagonal with respect to $(v_\alpha)_{\alpha=1}^n$; the compatibility condition \eqref{RDD} is satisfied.
The special case $n=2$ of this example corresponds to the datum $(R,D)$ for the periodic inhomogeneous Heisenberg XXZ spin-$\tfrac{1}{2}$ chain.

In addition to the conditions \eqref{YBE}, \eqref{RDD} and \eqref{Rregular} necessary for Prop.\ \ref{prop:qKZconsistencyper} and Thm.\ \ref{thm:connectionper}, we need further technical conditions to recover the commutativity of the transfer matrices.
Note that we have, for all $\alpha,\beta \in \{1,\ldots,n\}$,
\begin{align} \label{eqn:Rimage}
R(x)(v_\alpha \otimes v_\beta)  &\in \C v_\alpha \otimes v_\beta + \C v_\beta \otimes v_\alpha, \qquad \n{for all } x \in \dom(R), \\
\label{eqn:R0} R(0)(v_\alpha \otimes v_\beta) &\in \sum_{\gamma \leq \alpha, \; \delta \geq \beta} \C v_\gamma \otimes v_\delta, \\
\label{eqn:D} D v_\alpha &\in \C v_\alpha.
\end{align}

For this datum $(R,D)$ we can derive $[T(x;\bm z),T(y;\bm z)]=0$ from the consistency condition \eqref{eqn:qKZconsistency}.
\begin{thm} \label{thm:commutingtransfermatricesper2}
Let $R \in \Mer(V)$ and $D \in \End(V)$.
Assume that \eqref{YBE}, \eqref{RDD} and \eqref{Rregular} hold, that $R$ is a polynomial in $x$ of degree 1, and that, with respect to a certain ordered basis $(v_\alpha)_{\alpha=1}^n$ of $V$ we have \eqrefs{eqn:Rimage}{eqn:D}.
Then for all $x,y \in \C$ we have $[ T(x;\bm z), T(y;\bm z) ]=0$.
\end{thm}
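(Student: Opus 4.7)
Modeled on \cite[Prop.\ 4]{RSZJ}, the strategy is to promote commutativity of $T(\cdot;\bm z)$ from $N$ special values of the spectral parameter to all of $\C$ via a polynomial interpolation argument. The $N$ special points come essentially for free from what is already in the paper: Prop.\ \ref{prop:qKZconsistencyper} applied at $p=1$ gives $[A_i(\bm z;1), A_j(\bm z;1)]=0$, while Thm.\ \ref{thm:connectionper}, whose hypotheses \eqref{YBE}, \eqref{RDD}, \eqref{Rregular} (with the unitarity \eqref{Runitary} being a consequence of the first and the third) are all satisfied, gives $T(z_i;\bm z) \propto A_i(\bm z;1)$; combining yields $[T(z_i;\bm z), T(z_j;\bm z)] = 0$ for $1 \leq i,j \leq N$ and generic $\bm z$. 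Since $R$ is polynomial of degree $1$, the transfer matrix $T(x;\bm z)$ is an operator-valued polynomial of degree $\leq N$ in $x$, so we are exactly one vanishing point short of being able to interpolate to identically zero.

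The missing point will be $x=0$, and the crux of the argument is to establish $[T(0;\bm z), T(y;\bm z)] = 0$ for all $y$. Conditions \eqref{eqn:Rimage} and \eqref{eqn:D} together guarantee that $T(y;\bm z)$ preserves the weight decomposition of $V^{\otimes N}$ in which the weight of $v_{\beta_1}\otimes\cdots\otimes v_{\beta_N}$ is the tuple $(k_1,\ldots,k_n)$ with $k_\alpha = \#\{i : \beta_i = \alpha\}$: each $R$-factor preserves the multiset of basis indices on which it acts, and $D_0$ being diagonal preserves the index in the auxiliary space. Using the upper-triangularity \eqref{eqn:R0} of $R(0)$ in the auxiliary-space index, the trace computation collapses: starting in $v_\alpha^{(0)}$, the $V_0$-index can only non-increase under successive $R_{0i}(0)$-factors, so to return to $v_\alpha^{(0)}$ for the trace it must stay at $\alpha$ throughout, and a direct check shows that the resulting ``diagonal-in-$V_0$-at-$\alpha$'' piece of $R_{0i}(0)$ acts on the $V_i$-slot as multiplication by a scalar depending only on whether $\beta_i = \alpha$. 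Hence $T(0;\bm z)$ is diagonal in the natural basis with eigenvalue on $v_{\beta_1}\otimes\cdots\otimes v_{\beta_N}$ depending only on the weight $(k_1,\ldots,k_n)$ (explicitly, $\sum_\alpha d_\alpha q^{N-k_\alpha}$ with $d_\alpha$ the diagonal entries of $D$), i.e.\ a scalar on each weight subspace. Combined with the weight-preservation of $T(y;\bm z)$, this forces $[T(0;\bm z), T(y;\bm z)] = 0$ for all $y$.

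To finish, for fixed $y = z_j$ the operator-valued polynomial $x \mapsto [T(x;\bm z), T(z_j;\bm z)]$ of degree $\leq N$ vanishes at the $N+1$ pairwise distinct (for generic $\bm z$) points $0, z_1, \ldots, z_N$, hence identically; with $x$ now fixed, $y \mapsto [T(x;\bm z), T(y;\bm z)]$ is a polynomial of degree $\leq N$ in $y$ vanishing at the same $N+1$ points, so also identically. The commutativity thus holds for all $x, y \in \C$ and generic $\bm z$, and extends to arbitrary $\bm z$ by continuity of the commutator in the inhomogeneities. I expect the main obstacle to be the middle paragraph: it is not immediate that $x=0$ is the right extra vanishing point nor that the technical hypotheses \eqref{eqn:Rimage}--\eqref{eqn:D} are exactly tuned to make $T(0;\bm z)$ scalar on each weight subspace, and verifying this requires a careful bookkeeping of the triangular structure of $R(0)$ in the auxiliary space along the trace.
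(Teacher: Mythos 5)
Your proposal is correct and follows essentially the same route as the paper: interpolation at the points $z_1,\ldots,z_N$ (via Thm.~\ref{thm:connectionper} and Prop.~\ref{prop:qKZconsistencyper} at $p=1$) plus the extra point $x=0$, where $T(0;\bm z)$ acts as a scalar on each subspace $W^S_{\bm\alpha}$ (this is exactly Lemma~\ref{lem:transfermatrixperatzero}, proved by the same triangularity bookkeeping in the auxiliary index that you describe). The only cosmetic difference is that you check directly that $T(y;\bm z)$ preserves the weight subspaces for all $y$, whereas the paper checks this for the $A_i(\bm z;1)$ and transfers it to $T(z_i;\bm z)$ via proportionality; both suffice.
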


\begin{remark}
Razumov, Stroganov and Zinn-Justin essentially established this statement for $n=2$ in \cite[Prop.\ 4]{RSZJ}.
There it is used to deduce information about the ground state of XXZ spin chains and Temperley-Lieb loop models in case the parameter $p$ assumes root-of-unity values.
\end{remark}

For the proof of Thm.\ \ref{thm:commutingtransfermatricesper2} it is helpful to write $v_{\bm \beta} = v_{\beta_1} \otimes \cdots \otimes v_{\beta_N}$ for an $N$-tuple $\bm \beta = (\beta_1,\ldots,\beta_N) \in \{1,\ldots,n\}^N$ and consider the decomposition
\[ V^{\otimes N} = \bigoplus_{\bm \alpha \atop \alpha_1 \leq \ldots \leq \alpha_N} W^S_{\bm \alpha}, \qquad \n{where} \qquad W^S_{\bm \alpha} := \bigoplus_{\bm \beta \in S_N(\bm \alpha)} v_{\bm \beta}. \]
Here, $S_N(\bm \alpha) = \{ (\alpha_{w 1},\ldots,\alpha_{w N}) \, \mid \, w \in S_N \}$ for $\bm \alpha = (\alpha_1,\ldots,\alpha_N)$ denotes the orbit of $\bm \alpha$ under the standard action of the symmetric group.

\begin{proof}[Proof of Thm.\ \ref{thm:commutingtransfermatricesper2}]
From \eqref{YBE}, \eqref{RDD} and \eqref{Rregular} we deduce that \eqref{eqn:qKZconsistency} holds true.
Also, Theorem \ref{thm:connection} applies; in particular we deduce that $[T(z_i;\bm z),T(z_j;\bm z)]=0$ for $1 \leq i,j \leq N$.

Because of the conditions on $R(x)$ and $D$, each $A_i(\bm z;p)$ preserves each subspace $W^S_{\bm \alpha}$.
On the other hand, Lemma \ref{lem:transfermatrixperatzero} applies, yielding that $T(0;\bm z)$ acts trivially on each $W^S_{\bm \alpha}$.
Hence, $[T(0;\bm z),A_i(\bm z;1)]=0$.

Combining these facts we see that $[T(x;\bm z),T(y;\bm z)]=0$ where $x$ and $y$ assume values in a collection of $N+1$ interpolation points
\[ Z := \{ z_1,\ldots,z_N,0 \}. \]
Since $T(x;\bm z)$ is a polynomial of degree $N$, we may conclude that, given $x \in Z$, the polynomials $ [T(x;\bm z),T(y;\bm z)]$, of degree $N$ in $y$, vanish for $N+1$ values of $y$; hence these polynomials are zero for all values of $y$, \emph{provided that $x \in Z$}.
Therefore for all $y \in \C$ we can draw the following conclusion: the polynomials $[T(x;\bm z),T(y;\bm z)]$, of degree $N$ in $x$, vanish for $N+1$ values of $x$, so that these polynomials are zero for all values of $x$.
The desired conclusion follows.
\end{proof}

\subsection{Reflecting systems}
The goal of this subsection is to give an alternative proof of the commutativity property $[\ca T(x;\bm z),\ca T(y,\bm z)]=0$ using the bqKZ consistency conditions for $p=1$, viz. $[\ca{A}_i(\bm z;1), \ca{A}_j(\bm z;1)]=0$, for R- and K-matrix datum satisfying further conditions, in analogy with the periodic case.
In particular, we will rely on the main Thm.\ \ref{thm:connection}.
The main example we will have in mind is given by the following datum $(R,K^+,K^-)$.
With $V = (v_\alpha)_{\alpha=1}^n$, let $R$ be as at the start of this section.
For $\alpha \in \{1,\ldots,n\}$, write $\bar \alpha := n+1-\alpha \in \{1,\ldots,n\}$.
Then $R$ satisfies \eqref{eqn:Rsigmasigma} with $J \in \GL(V)$ given by $J (v_\alpha) = v_{\bar \alpha}$.

Also, crossing symmetry \eqrefs{CS}{RMM} is satisfied, with $r = q^{h^\vee} = q^n$ and $M = \n{diag}(q^{2\rho}) = \n{diag}(q^{n-1},q^{n-3},\ldots,q^{-(n-1)})$, i.e.
\[ M( v_\alpha) =  q^{\bar \alpha - \alpha} v_\alpha, \qquad \alpha=1,\ldots,n. \]

Given $\theta, \kappa \in \C$ and $\bm \xi = (\xi_1,\ldots,\xi_{\lfloor n/2 \rfloor}) \in (\C^\times)^{\lfloor n/2 \rfloor}$ we define $K_{\theta,\kappa,\bm \xi} \in \Mer(V)^\times$ by
\[ K_{\theta,\kappa,\bm \xi}(x) \cdot v_\alpha =\theta x v_\alpha + \begin{cases}  (1-\kappa) v_\alpha + \tfrac{\kappa}{\xi_\alpha}(1-x^2) v_{\bar \alpha}, & \alpha < \tfrac{n+1}{2}, \\ (1-\kappa x^2) v_\alpha, & \alpha = \tfrac{n+1}{2}, \, n \n{ odd}, \\ \xi_{\bar \alpha}(1-x^2) v_{\bar \alpha} + (1-\kappa)x^2   v_\alpha, & \alpha > \tfrac{n+1}{2}.  \end{cases} \]
Evidently, $K_{\theta,\kappa,\bm \xi}$ is a polynomial element of $\Mer(V)$ of degree 2 satisfying \eqref{Kregular}.
Using these and properties satisfied by $R$, a straightforward argument shows that $K_{\theta,\kappa,\bm \xi}$ satisfies the LRE \eqref{LRERK} and boundary unitarity \eqref{Kunitary}.
Fix parameters $\theta^\pm, \kappa^\pm \in \C$ and $\bm \xi^\pm \in (\C^\times)^{\lfloor n/2 \rfloor}$ and now define $K^\pm \in \Mer(V)^\times$ by
\begin{align*}
K^+(x) &= (1-q^{2n} x^2) K_{\theta^+, \kappa^+,\bm \xi^+}(x) \\
K^- &= \chi_J(K_{\theta^-, \kappa^-,\bm \xi^-}).
\end{align*}
We immediately have that $K^+$ satisfies the LRE \eqref{LRERK} and, owing to the comment inmmediately following \eqref{eqn:Rsigmasigma}, that $K^-$ satisfies the RRE \eqref{RRERK}.
In addition, they both satisfy boundary regularity \eqref{Kregular} and boundary unitarity \eqref{Kunitary}.

Moreover, define $K' \in \Mer(V)^\times$ as in Thm.\ \ref{thm:connection} by $K'= \phi_{\tilde R}(K^+)$, so that $K'$ satisfies the DRE \eqref{DRERK}.
Explicitly it can be checked that, for generic $x$,
\[ K'(x)= \begin{cases} q^{-1} K_{\theta^+,\kappa^+,\tilde{\bm \xi}^+}(q^n x) M, & n \n{ even} \\ K_{q^{-1} \theta^+,q^{-2}\kappa^+,\Delta_q \bm \xi^+}(q^n x) M, & n \n{ odd}, \end{cases} \]
where
\[ \Delta_q = \n{diag}(q^{2 \lfloor n/2 \rfloor -1},q^{2 \lfloor n/2 \rfloor -3 },\ldots,q) \in \GL(\C^{\lfloor n/2 \rfloor}). \]

In addition to the identities just discussed, which are necessary for Lemma \ref{lem:transfermatrixatone}, Prop.\ \ref{prop:bqKZconsistency} and Thm.\ \ref{thm:connection}, we note that the datum $(R,K^+,K^-)$ satisfies further conditions.
We have
\begin{equation} \label{eqn:Kprimespecial} K'(\pm q^{-n}) \propto M. \end{equation}
Using the definitions of $K^-$ and $K'$ in terms of $K_{\theta,\kappa,\bm \xi}$ and the special form of $K_{\theta,\kappa,\bm \xi}(0)$, for all $\alpha \in \{1,\ldots,n\}$ we have
\begin{gather}
\label{eqn:Kimage} K^-(x)(v_\alpha) ,K'(x)(v_\alpha) \in \C v_\alpha + \C v_{\bar \alpha}, \qquad \n{for all } x \in \dom(K^-), \dom(K'),\\
\label{eqn:K0} K^-(0)(v_\alpha) \in \sum_{\gamma \leq \alpha} \C v_{\bar \gamma} \qquad \n{and} \qquad K'(0)(v_{\bar \alpha}) \in \sum_{\gamma \leq \alpha} \C v_{\gamma}.
\end{gather}
Finally, note that $K^-$ and $ K'$ are polynomial in $x$ of degree 2.

Before we state the Theorem, we need to address a subtlety which is absent from the periodic case.
In the periodic case we relied on the fact that the R-matrices were polynomial in the spectral parameter; however the boundary transfer matrix $\ca T(x;\bm z)$ contains inverses of R-matrices so that it cannot be polynomial and the analogon of the argument in the proof of Thm.\ \ref{thm:commutingtransfermatricesper2} will not apply.
Therefore we will work with the \emph{modified boundary transfer matrix} $\tilde{\ca T}$ defined by
\[ \tilde{\ca T}(x;\bm z) := \Tr_0 K'_0(x)  R_{10}(xz_1)\cdots R_{N0}(xz_N)  K^-_0(x) R_{0N}(\tfrac{x}{z_N}) \cdots R_{01}(\tfrac{x}{z_1}) \]
which is polynomial in $x$, provided the R- and K-matrices are.
We have $\tilde{\ca T}(x;\bm z) \propto \ca T(x;\bm z)$ for generic $\bm z$ provided unitarity \eqref{Runitary} is satisfied (the hidden factor in this proportionality relation only depends on the products $xz_i$).
In Thm.\ \ref{thm:connection} we have seen that, under suitable assumptions on the R- and K-matrix datum, the interpolants $\ca T(z_i^{\pm 1};\bm z)$, and hence also the modified interpolants $\tilde{\ca T}(z_i^{\pm 1};\bm z)$, are proportional to (inverted) bqKZ transport matrices for $p=1$:
\[\tilde{\ca T}(z_i;\bm z) \propto \ca{A}_i(\bm z;1), \qquad \tilde{\ca T}(z_i^{-1};\bm z) \propto \ca{A}_i(\bm z;1)^{-1} , \qquad i=1,\ldots,N. \]
Using these modified $\tilde{\ca T}$ we can derive $[\ca T(x;\bm z),\ca T(y;\bm z)]=0$ from the bqKZ consistency conditions \eqref{eqn:bqKZconsistency}, analogously to Thm.\ \ref{thm:commutingtransfermatricesper2}.

\begin{thm}\label{thm:commutingtransfermatrices2}
Let $R \in \Mer(V^{\otimes 2})^\times$ such that $R^{t_1} \in \Mer(V^{\otimes 2})^\times$, $K^+,K^- \in \Mer(V)^\times$, $M \in \GL(V)$, $r \in \C^\times$ and $\bm z \in (\C^\times)^N$.
Assume the conditions \eqrefs{YBE}{RRERK}, \eqrefs{CS}{RMM}, \eqref{Kregular}, \eqref{Rregular}, \eqref{eqn:Rsigmasigma}, \eqref{Kunitary} and $\pm r^{-1} \in \dom(K^-)$.
Write $K'= \phi_{\tilde R}(K^+)$ and assume that $\pm 1 \in \dom(K')$ and \eqref{eqn:Kprimespecial} are satisfied.
Assume that $R$, $K'$ and $K^-$ are polynomial in $x$ of degree 1, 2 and 2, respectively, and that, with respect to a certain ordered basis $(v_\alpha)_{\alpha=1}^n$ of $V$ we have \eqrefs{eqn:Rimage}{eqn:R0} and \eqrefs{eqn:Kimage}{eqn:K0}.
Then for all $x,y \in \dom(\ca T)$ we have $[ \ca T(x;\bm z), \ca T(y;\bm z) ]=0$.
\end{thm}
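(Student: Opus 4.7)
The plan is to mirror the strategy of Thm.\ \ref{thm:commutingtransfermatricesper2}: combine the bqKZ consistency at $p=1$ with a polynomial interpolation argument, working throughout with the modified boundary transfer matrix $\tilde{\ca T}(x;\bm z)$ so as to have a polynomial in $x$. First, the hypotheses \eqref{YBE}, \eqref{RRERK} and \eqref{LRERK}, together with Prop.\ \ref{prop:bqKZconsistency}, give $[\ca{A}_i(\bm z;1),\ca{A}_j(\bm z;1)]=0$ for all $i,j$. The remaining assumptions (YBE, regularity \eqref{Rregular}, boundary unitarity \eqref{Kunitary} for $K^\pm$, and $K' = \phi_{\tilde R}(K^+)$) place us in the setting of Thm.\ \ref{thm:connection}, yielding $\tilde{\ca T}(z_i;\bm z)\propto \ca{A}_i(\bm z;1)$ and $\tilde{\ca T}(z_i^{-1};\bm z)\propto \ca{A}_i(\bm z;1)^{-1}$, hence $[\tilde{\ca T}(z_i^{\epsilon};\bm z),\tilde{\ca T}(z_j^{\eta};\bm z)]=0$ for all $i,j$ and signs $\epsilon,\eta$.

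Next I would introduce a block decomposition of $V^{\otimes N}$ adapted to the reflection structure. The relevant symmetry group is the hyperoctahedral group, generated by permutations of tensor factors together with the involution $\alpha\mapsto\bar\alpha$ at each position. For a weakly increasing tuple $\bm\alpha$ in $\{1,\ldots,\lceil n/2\rceil\}^N$, let $W^B_{\bm\alpha}$ be the span of the basis vectors $v_{\bm\beta}$ in the hyperoctahedral orbit of $v_{\bm\alpha}$. By \eqref{eqn:Rimage} and \eqref{eqn:Kimage} the operators $R(x)$, $K^-(x)$ and $K'(x)$ preserve the decomposition $V^{\otimes N} = \bigoplus_{\bm\alpha} W^B_{\bm\alpha}$, hence so does each $\ca{A}_i(\bm z;1)$.

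Counting degrees in $x$, the modified transfer matrix $\tilde{\ca T}(x;\bm z)$ is polynomial of degree at most $2+N+2+N = 2N+4$; it therefore suffices to exhibit $2N+5$ distinct values $x_0$ at which $\tilde{\ca T}(x_0;\bm z)$ acts as a scalar on each block $W^B_{\bm\alpha}$ and thus commutes with every $\ca{A}_j(\bm z;1)$. I would take: the $2N$ points $\{z_i^{\pm 1}\}$, at which $\tilde{\ca T}$ is (proportional to) a bqKZ transport operator and in particular block-preserving; the two points $x=\pm 1$ where Lemma \ref{lem:transfermatrixatone} gives $\ca T(\pm 1;\bm z)\propto\Id_W$, so that $\tilde{\ca T}(\pm 1;\bm z)$ is a scalar on the whole of $V^{\otimes N}$; the two points $x=\pm r^{-1}$, where \eqref{eqn:Kprimespecial} provides $K'(\pm r^{-1})\propto M$ and \eqref{RMM} allows one to commute $M_0$ through the $2N$ R-matrix factors so that, after using trace cyclicity and crossing symmetry \eqref{CS}, $\tilde{\ca T}(\pm r^{-1};\bm z)$ reduces to a block-scalar operator; and finally $x=0$, where the triangularity conditions \eqref{eqn:R0} and \eqref{eqn:K0} render $\tilde{\ca T}(0;\bm z)$ triangular in a basis refining the block decomposition and in fact constant on each $W^B_{\bm\alpha}$. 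For generic $\bm z$ these $2N+5$ points are pairwise distinct (using $r\notin\{\pm 1\}$, which holds in the motivating $U_q(\hat{\f{sl}}_n)$ example with $r=q^n$).

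The conclusion then follows by a double polynomial vanishing argument as in the periodic proof: for each $x_0$ in the interpolation set, $[\tilde{\ca T}(x_0;\bm z),\tilde{\ca T}(y;\bm z)]$ is polynomial in $y$ of degree at most $2N+4$ and vanishes at $2N+5$ distinct points, hence identically; letting $x_0$ vary, for each fixed $y$ the polynomial $[\tilde{\ca T}(x;\bm z),\tilde{\ca T}(y;\bm z)]$ in $x$ vanishes at $2N+5$ values and is therefore identically zero. The proportionality $\tilde{\ca T}(x;\bm z)\propto\ca T(x;\bm z)$ (valid for generic $\bm z$ under the hypotheses) then delivers the commutativity of $\ca T$. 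I expect the main obstacle to lie in the analysis at $x=\pm r^{-1}$: threading $M_0$ through all $2N$ R-matrices via \eqref{RMM}, reassembling the conjugated factors using \eqref{CS}, and checking that the resulting partial trace does indeed act as a scalar on each $W^B_{\bm\alpha}$. The analogous verification at $x=0$ is a direct extension of the periodic-case computation used in Thm.\ \ref{thm:commutingtransfermatricesper2}.
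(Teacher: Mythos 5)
Your proposal is correct and follows essentially the same route as the paper: bqKZ consistency at $p=1$ plus Thm.~\ref{thm:connection} for the $2N$ points $z_i^{\pm 1}$, the scalar evaluations at $\pm 1$ (Lemma~\ref{lem:transfermatrixatone}) and $\pm r^{-1}$ (which the paper isolates as Lemma~\ref{lem:transfermatrixatqinverse}), the block-scalar evaluation at $0$ on the hyperoctahedral sectors (your $W^B_{\bm\alpha}$ is the paper's $W^{\ca S}_{\bm\alpha}$, handled in Lemma~\ref{lem:transfermatrixatzero}), and the degree-$(2N+4)$ interpolation in $x$ and $y$. Only your framing sentence that all $2N+5$ points yield block-scalars is slightly off --- at $z_i^{\pm1}$ one only gets block-preserving operators and must invoke the consistency relations for their mutual commutativity --- but your subsequent point-by-point description gets this right.
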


Before we give the proof, we consider another decomposition of $V^{\otimes N}$.
The hyperoctahedral group $\ca S_N$ acts on $\{1,\ldots,n\}^N$ by permutations and inversions of entries: when writing $\ca S_N = S_N \ltimes \langle e_1, \ldots,e_N \rangle$ we let elements from $S_N$ act by permutations and the $e_i$ by inversions:
\[ e_i(\alpha_1,\ldots,\alpha_N) = (\alpha_1,\ldots,\alpha_{i-1},\bar \alpha_i,\alpha_{i+1},\ldots,\alpha_N) \]
for $i=1,\ldots,N$ and $\alpha_1,\ldots,\alpha_N \in \{1,\ldots,n\}$.
Given an $N$-tuple $\bm \alpha = (\alpha_1,\ldots,\alpha_N) \in \{1,\ldots,n\}^N$, the orbit of $\bm \alpha$ under $\ca S_N$ is denoted
\[ \ca S_N(\bm \alpha) = \{ w(\bm \alpha) \, \mid \, w \in \ca S_N \} \subset \{1,\ldots, n\}^N. \]
We have the decomposition
\[ V^{\otimes N} = \bigoplus_{\bm \alpha \atop \alpha_1 \leq \ldots \leq \alpha_N \leq \tfrac{n+1}{2}}W^{\ca S}_{\bm \alpha}, \qquad \n{where} \qquad W^{\ca S}_{\bm \alpha} := \bigoplus_{\bm \beta \in \ca S_N(\bm \alpha)} v_{\bm \beta}. \]

\begin{proof}[Proof of Thm.\ \ref{thm:commutingtransfermatrices2}.]
Because \eqref{YBE} and \eqref{Rregular} are satisfied, we have \eqref{Runitary}.
Hence, $\tilde{\ca T}(x;\bm z) \propto \ca T(x;\bm z)$ for generic values of $\bm z$.
Since \eqrefs{YBE}{RRERK} hold true, the bqKZ consistency conditions \eqref{eqn:bqKZconsistency} are satisfied.
Hence,
\[ [\ca{A}_i(\bm z;1),\ca{A}_j(\bm z;1)]=[\ca{A}_i(\bm z;1),\ca{A}_j(\bm z;1)^{-1}]=[\ca{A}_i(\bm z;1)^{-1},\ca{A}_j(\bm z;1)^{-1}]=0 \]
for all $i,j \in \{1,\ldots,N\}$.
Consider the modified transfer matrix $\tilde{\ca T}(x;\bm z)$, which is a polynomial in $x$ of degree $2N+4$.
Theorem \ref{thm:connection}, combined with Lemmas \ref{lem:transfermatrixatone} and \ref{lem:transfermatrixatqinverse} yields $[\tilde{\ca T}(x;\bm z),\tilde{\ca T}(y;\bm z)]=0$ where $x$ and $y$ run through $2N+4$ interpolation points $z_1^{\pm 1},\ldots,z_N^{\pm 1},\pm 1,\pm r^{-1}$.
The final necessary interpolation point is again 0, for which we invoke Lemma \ref{lem:transfermatrixatzero} and note that the $\ca A_i(\bm z;1)^{\pm 1}$ preserve each sector $W^{\ca S}_{\bm \alpha}$, which owes to \eqref{eqn:Rimage} and \eqref{eqn:Kimage}.
This yields $[\tilde{\ca T}(x;\bm z),\tilde{\ca T}(y;\bm z)]=0$ for $x,y \in \ca{Z}$, where
\[ \ca{Z} := \{ z_1,\ldots,z_N,z_1^{-1},\ldots,z_N^{-1},1,-1,r^{-1},-r^{-1},0 \}. \]
From a similar argument as the one concluding the proof of Thm.\ \ref{thm:commutingtransfermatricesper2}, it follows that the polynomials $[\tilde{\ca T}(x;\bm z),\tilde{\ca T}(y;\bm z)]$ are zero for all values of $x$ and $y$.
Hence also the original boundary transfer matrices commute: $[\ca T(x;\bm z), \ca T(y;\bm z) ]=0$ for generic values of $x$ and $y$.
Because $\ca T(x;\bm z)$ depends meromorphically on $x$, the desired statement follows.
\end{proof}

\section{Outlook} \label{sec:outlook}

The reflection equations and boundary transfer matrix as written down by Sklyanin \cite{Sk} are associated to reflecting integrable systems where the particle-boundary interaction does not change the nature of the particle: the states of the incoming and outgoing particle inhabit the same vector space.
It is equally possible (see \cite{ACDFR1,ACDFR2,Doikou1,Doikou2}) to consider the case where after reflection with the boundary the vector space related to the outgoing particle is dual to the one of the incoming particle, characterizing so-called \emph{soliton non-preserving} boundary conditions and subject to the so-called \emph{twisted reflection equation}.
Replacing \emph{both} K-matrices by solutions of suitable twisted reflection equations, we obtain transfer matrices and qKZ transport matrices that are still well-defined operators acting on a global state space.
It would be of interest to modify the theory in the present paper to cover this case as well.\\

With respect to the results of Section \ref{sec:inhtfermatsandscatmats}, one may consider the generalization of Thms. \ref{thm:connectionper} and \ref{thm:connection} to the case where in the state space $V_1 \otimes \cdots \otimes V_N$ not all $V_i$ are isomorphic.
Then some R-matrices making up the (b)qKZ transport matrix act in a tensor product of different spaces, and for them no direct analogon of regularity \eqref{Rregular} exists.
It would be interesting to see in how far the argument can be salvaged.
Also cf. point (2) below.\\

It should be possible to generalize the analysis in Section \ref{sec:tfermatsrevisited} in the following ways.
\begin{enumerate}
\item There are also polynomial solutions $K$ of the DRE or RRE of degree 1 (for $R$ of $U_q(\hat{\f{sl}}_n)$-type).
In this case only one of $K(1), K(-1)$ is a multiple of the identity.
However, since the degree of $\ca T$ is reduced by one, we need one fewer interpolation point.
Moreover, there are also constant (off-diagonal) solutions $K$ to the REs, which do not satisfy boundary regularity; however, in this case the degree of $\ca T$ is reduced by two, and we do not need these interpolation points at all.
Hence, conjecturally, the argument of Section \ref{sec:tfermatsrevisited} can be modified to deal with these cases, as well.
A special case of this would be the qKZ equations associated to affine root systems of types B and D (in Cherednik's framework), which correspond to special choices of constant K-matrices (for $K^+$ in type B and for both $K^\pm$ in type D). 
\item We can also look at non-fundamental finite-dimensional representations of $U_q(\hat{\f{sl}}_n)$, in which case R- and K-matrices can be obtained from the ones discussed here through \emph{fusion} (see, e.g., \cite{DoikouFusion,MN2, RSV2}).
Then the degree of $R$ will be higher, so more interpolation points are needed.
For the periodic case, this problem was considered in \cite{FZJ}, where additional interpolation points of the transfer matrix were found, related to the $q$-dependent shift in the spectral parameter associated with fusion.
This could be extended to the reflecting case.
\item The R- and K-matrices from Section \ref{sec:tfermatsrevisited} are gauge-equivalent to the trigonometric solutions of the additive YBE and REs.
It is equally natural to do a similar analysis with rational or elliptic R- and K-matrices.
\item It would be interesting to consider R- and K-matrices associated to $U_q(\hat{\f g})$ for the orthogonal and symplectic Lie-algebras $\f g$.
\end{enumerate}

\appendix

\section{Linear algebra in tensor products} \label{sec:linearalgebra}

Consider a tensor product $\oplus_{i \in I} V_i =: V_I$ of complex vector spaces $V_i$, where $I$ is a finite ordered set (typically, $\{1,\ldots,N\}$ or $\{0,1,\ldots,N\}$).
We use standard subscript ``tensor leg notation'' to turn a local operator (i.e.\ an operator acting on one or two of the $V_i$) into a global operator (one acting on $V_I$) by stipulating that it acts nontrivially only in those $V_i$ specified by the subscript.
Consider the canonical embeddings $\lambda_i: \End(V_i) \to \End(V_I)$ ($i \in I$) and $\mu_{ij}: \End(V_i \otimes V_j) \to \End(V_I)$ ($i,j \in I, i \ne j$). 
Then for $i \in I$ and $Y \in \End(V_i)$ consider $Y_i := \lambda_i(Y) = \Id_{V_{<i}} \otimes Y \otimes \Id_{V_{>i}} \in \End(V_I)$; in other words it is the linear operator on $V_I$ that acts trivially in all $V_j$ where $j \ne i$ and as $Y$ in $V_i$.
Similarly, for $i,j \in I$, $i \ne j$ and $X \in \End(V_i \otimes V_j)$, we define $X_{ij} = \mu_{ij}(X) \in \End(V_I)$; it is the linear operator on $V_I$ which acts trivially in all $V_k$ where $k \ne i,j$ and as $X$ in $V_i \otimes V_j$.

Furthermore, given parameter-dependent local operators $X \in \Mer(V_{ij}), Y \in \Mer(V_i)$ we define $Y_i, X_{ij} \in \Mer(V_I)$ by $X_{ij}(x) = (X(x))_{ij}$ for $x \in \dom(X)$ and $Y_i(x) = (Y(x))_i$ for $x \in \dom(Y)$.\\

Partial transposition, i.e.\ transposing $X \in \End(V_I)$ with respect to a finite-dimensional $V_i$ ($i \in I$) is denoted by $X^{t_i} \in \End(V_I)$.
More precisely, the partial transpose in $V_I$ with respect to $V_i$ is the unique linear operator $t_i: \End(V_I) \to \End(V_I): Z \mapsto Z^{t_i}$ such that
\[ (X \otimes Y \otimes \tilde X)^{t_i} = X \otimes Y^t \otimes \tilde X, \quad \n{for all } X \in \End(V_{<i}), \, Y \in V_i, \, \tilde X \in \End(V_{>i}). \]
Whenever we partially transpose a linear operator, the pertinent vector space is saliently assumed to be finite-dimensional.

Let $X \in \End(V_i \otimes V_j)$ and $\tilde X \in \End(V_i \otimes V_k)$.
Then in $\End(V_I)$ we have
\begin{equation}\label{transposeofproduct}
(X_{ij} \tilde X_{ik})^{t_i} = \tilde X_{ik}^{t_i} X_{ij}^{t_i}, \qquad (X_{ij} \tilde X_{ik})^{t_j} = X_{ij}^{t_j}\tilde X_{ik}, \qquad  (X_{ij} \tilde X_{ik})^{t_k} = X_{ij}\tilde X_{ik}^{t_k} \hspace{-4mm}.
\end{equation}

Furthermore, the notion of taking the trace of $X \in \End(V_I)$ with respect to $V_i$ (``partial trace'') is denoted $\Tr_i(X)$ or $\Tr_{V_i}(X)$.
More precisely, if $V_i$ for some $i \in I$ is finite-dimensional, then $\Tr_i$ is the unique linear operator: $\End(V_I) \to \End(V_{I \setminus \{i\}})$ such that
\[ \Tr_i (X \otimes Y \otimes \tilde X) = \Tr(Y) X \otimes \tilde X, \quad \n{for all } X \in \End(V_{<i}), \, Y \in V_i, \, \tilde X \in \End(V_{>i}). \]
Whenever we take the partial trace of a linear operator, the pertinent vector space is saliently assumed to be finite-dimensional.
We have $\Tr_j P_{ij} = \Id_{V_i}$ if $V_i = V_j$ and
\begin{equation} \Tr_i Z Y_i= \Tr_i Y_i Z \in \End(V_{I \setminus \{i\}}), \qquad \n{for all } Y \in \End(V_i), \, Z \in \End(V_I). \label{commuteinsidetrace} \end{equation}
For $i, j \in I$ with $i \ne j$ we can consecutively take partial traces with respect to $V_i$ and $V_j$; the order of this does not matter and we employ the notation
\[ \Tr_{i,j} := \Tr_i \Tr_j = \Tr_j \Tr_i : \End(V_I) \to \End(V_{I \setminus\{ i,j\}}). \]
We have the identity
\begin{equation} \Tr_{i,j} X_{ik} \tilde X_{jk}  = (\Tr_i X_{ik}) (\Tr_j \tilde X_{jk}) \; \in \End(V_{I \setminus \{i,j\}}), \label{productoftraces} \end{equation}
for all $X \in \End(V_i \otimes V_k)$, $\tilde X \in \End(V_j \otimes V_k)$; note that $\Tr_i X_{ik}$ acts trivially in $V_j$ so that we may view it as an element of $\End(V_{I \setminus \{j,k\}})$.

The interplay between partial traces and partial transposes is captured by the following identities in $\End(V_{I \setminus \{i\}})$ :
\begin{align}
\Tr_i Z^{t_i} \tilde Z^{t_i} &= \Tr_i Z \tilde Z, && \n{for all } Z,\tilde Z \in \End(V_I),  \label{traceoftransposes} \\
\Tr_i \left(Z^{t_j}\right) &= (\Tr_i Z)^{t_j}, && \n{for all } Z \in \End(V_I), \quad j \in I\setminus\{i\}. \label{traceofothertranspose}
\end{align}
Combining \eqref{transposeofproduct} and \eqref{traceofothertranspose}, for $V_i \cong V_j$ and $X, \tilde X \in \End(V_j \otimes V_k)$, we obtain the useful identity
\begin{equation}
\Tr_i P_{ij} X_{jk} \tilde X_{ik} = (X_{jk}^{t_j} \tilde X_{jk}^{t_j})^{t_j} \quad \in \End(V_{I \setminus \{i\}}). \label{Pintrace}
\end{equation}
All identities in this appendix naturally transform to identities for objects in $\Mer(V_I)$ and are used as such in the main text. 

\section{Proof of Lemma \ref{lem:refleqns}} \label{sec:proofs}

\begin{proof}
First we will show that $\phi_R(\n{Refl}'(R)) \subset \n{Refl}^+(R)$, in other words derive
\[  R_{12}(x/y) \phi_R(K')_1(x) R_{21}(xy) \phi_R(K')_2(y)  = \phi_R(K')_2(y) R_{12}(xy) \phi_R(K')_1(x) R_{21}(x/y) \]
from the DRE \eqref{DRERK} for generic values of $x,y$.
Owing to \eqref{traceoftransposes} we have
\[  \phi_R(K')_1(x) =\Tr_0 \bigl(K'_0(x) P_{01}\bigr)^{t_0} R_{01}(x^2)^{t_0}. \]
Owing to \eqref{productoftraces}, the identity $\tilde R_{00'}(xy)^{t_0} R_{00'}(xy)^{t_0} = \Id_{V^{\otimes 2}}$ and \eqref{transposeofproduct} we have
\begin{align*}
&  \phi_R(K')_1(x) R_{21}(xy) \phi_R(K')_2(y) = \\
& = \Tr_{0} \bigl(K'_0(x) P_{01}\bigr)^{t_0} R_{01}(x^2)^{t_0} R_{21}(xy)  \Tr_{0'} K'_{0'}(y) P_{0'2} R_{0'2}(y^2) \displaybreak[2] \\
& = \Tr_{0,0'}  K'_{0'}(y) \bigl(K'_0(x) P_{01}\bigr)^{t_0} P_{0'2}  R_{01}(x^2)^{t_0} R_{0'1}(xy)   R_{0'2}(y^2) \displaybreak[2] \\
&= \Tr_{0,0'}  K'_{0'}(y)  \bigl(K'_0(x) P_{01}\bigr)^{t_0} \tilde R_{00'}(xy)^{t_0} P_{0'2} R_{02}(x y)^{t_0} R_{01}(x^2)^{t_0} R_{0'1}(xy)   R_{0'2}(y^2) \displaybreak[2] \\
&= \Tr_{0,0'} \bigl(  K'_{0'}(y) \tilde R_{00'}(xy) K'_0(x) P_{01} P_{0'2}\bigr)^{t_0}  \bigl(R_{01}(x^2)  R_{02}(x y) R_{0'1}(xy)   R_{0'2}(y^2) \bigr)^{t_0}  \displaybreak[2]
\end{align*}
so that \eqref{traceoftransposes} leads to
\begin{equation} \label{eqn:expression}
\begin{aligned}
&  \phi_R(K')_1(x) R_{21}(xy) \phi_R(K')_2(y)  = \\
& \qquad =\Tr_{0,0'} K'_{0'}(y) \tilde R_{00'}(xy) K'_0(x) P_{01} P_{0'2}R_{01}(x^2)  R_{02}(x y) R_{0'1}(xy)   R_{0'2}(y^2)  \end{aligned}
\end{equation}
It follows that
\begin{align*}
& R_{12}(\tfrac{x}{y})  \phi_R(K')_1(x) R_{21}(xy) \phi_R(K')_2(y)  =  \\
& \qquad = \Tr_{0,0'} K'_{0'}(y) \tilde R_{00'}(xy) K'_0(x) P_{01} P_{0'2} R_{00'}(\tfrac{x}{y}) R_{01}(x^2)  R_{02}(x y) R_{0'1}(xy)   R_{0'2}(y^2)  \hspace{-3mm} \displaybreak[2] \\
& \qquad = \Tr_{0,0'} K'_{0'}(y) \tilde R_{00'}(xy) K'_0(x) P_{01} P_{0'2} R_{02}(x y) R_{01}(x^2)   R_{0'2}(y^2)  R_{0'1}(xy)  R_{00'}(\tfrac{x}{y}) \hspace{-3mm} \displaybreak[2] \\
& \qquad = \Tr_{0,0'} R_{00'}(\tfrac{x}{y}) K'_{0'}(y) \tilde R_{00'}(xy) K'_0(x) P_{01} P_{0'2} R_{02}(x y) R_{01}(x^2)   R_{0'2}(y^2)  R_{0'1}(xy) \hspace{-3mm} \displaybreak[2] .
\end{align*}
where we have applied \eqref{YBE} twice and \eqref{commuteinsidetrace}.
Now applying the DRE \eqref{DRERK} followed by \eqref{YBE} (twice) we obtain
\begin{align}
& R_{12}(\tfrac{x}{y})  \phi_R(K')_1(x) R_{21}(xy) \phi_R(K')_2(y)  = \nonumber \\
&= \Tr_{0,0'}  K'_0(x)\tilde R_{0'0}(xy) K'_{0'}(y)  R_{0'0}(\tfrac{x}{y})  P_{01} P_{0'2} R_{02}(x y) R_{01}(x^2)   R_{0'2}(y^2)  R_{0'1}(xy) \hspace{-3mm} \displaybreak[2] \nonumber \\
&= \Tr_{0,0'}  K'_0(x)\tilde R_{0'0}(xy) K'_{0'}(y) P_{01} P_{0'2}   R_{01}(x^2)   R_{02}(x y)  R_{0'1}(xy) R_{0'2}(y^2)   R_{21}(\tfrac{x}{y}) \hspace{-3mm} \displaybreak[2],
\end{align}
which equals $\phi_R(K')_2(y) R_{12}(xy) \phi_R(K')_1(x) R_{21}(x/y)$ as desired by virtue of \eqref{eqn:expression}.

It remains to show that $\phi_{\tilde R}(\n{Refl}^+(R)) \subset \n{Refl}'(R)$ which can be done in an analogous way as before, with the following modifications.
Instead of inserting $\tilde R_{00'}(xy)^{t_0} R_{00'}(xy)^{t_0} = \Id_{V^{\otimes 2}}$ we insert $R_{00'}(xy)^{t_0} \tilde R_{00'}(xy)^{t_0} = \Id_{V^{\otimes 2}}$.
Initially it leads to
\begin{equation} \label{eqn:expression2}
\begin{aligned}
&\phi_{\tilde R}(K^+)_1(x) \tilde R_{21}(xy) \phi_{\tilde R}(K^+)_2(y) =\\
& \qquad = \Tr_{0,0'} K^+_{0'}(y) R_{00'}(xy) K^+_0(x) P_{01} P_{0'2} \tilde R_{01}(x^2) \tilde R_{02}(xy) \tilde R_{0'1}(xy) \tilde R_{0'2}(y^2).
\end{aligned}
\end{equation}
Now we claim that, since $R$ satisfies the YBE \eqref{YBE}, for generic values $x,y$ we have
\begin{equation} \label{YBEtwisted}
R_{12}(\tfrac{x}{y})^{-1} \tilde R_{13}(x) \tilde R_{23}(y) = \tilde R_{23}(y) \tilde R_{13}(x)  R_{12}(\tfrac{x}{y})^{-1}, \end{equation}
which can be straightforwardly checked using \eqref{transposeofproduct}.
Repeated use of \eqref{YBEtwisted} instead of \eqref{YBE}, as well as applying \eqref{commuteinsidetrace} and \eqref{LRERK} now allows us to continue along the same lines as before:
\begin{align*}
&R_{12}(\tfrac{x}{y})^{-1} \phi_{\tilde R}(K^+)_1(x) \tilde R_{21}(xy) \phi_{\tilde R}(K^+)_2(y) =\\
& \quad = \Tr_{0,0'} K^+_{0'}(y) R_{00'}(xy) K^+_0(x) P_{01} P_{0'2} R_{00'}(\tfrac{x}{y})^{-1}  \tilde R_{01}(x^2) \tilde R_{02}(xy) \tilde R_{0'1}(xy) \tilde R_{0'2}(y^2) \displaybreak[2] \\
& \quad = \Tr_{0,0'} K^+_{0'}(y) R_{00'}(xy) K^+_0(x) P_{01} P_{0'2} \tilde R_{0'1}(xy) \tilde R_{01}(x^2)  \tilde R_{0'2}(y^2)\tilde R_{02}(xy)  R_{00'}(\tfrac{x}{y})^{-1} \displaybreak[2] \\
& \quad = \Tr_{0,0'}  R_{00'}(\tfrac{x}{y})^{-1} K^+_{0'}(y) R_{00'}(xy) K^+_0(x) P_{01} P_{0'2} \tilde R_{0'1}(xy) \tilde R_{01}(x^2)  \tilde R_{0'2}(y^2)\tilde R_{02}(xy) \displaybreak[2]  \\
& \quad = \Tr_{0,0'} K^+_0(x)  R_{0'0}(xy)   K^+_{0'}(y) P_{01} P_{0'2} R_{21}(\tfrac{x}{y})^{-1}  \tilde R_{0'1}(xy) \tilde R_{01}(x^2)  \tilde R_{0'2}(y^2)\tilde R_{02}(xy) \displaybreak[2]  \\
& \quad = \Tr_{0,0'} K^+_0(x)  R_{0'0}(xy)   K^+_{0'}(y) P_{01} P_{0'2} \tilde R_{0'2}(y^2) \tilde R_{0'1}(xy)   \tilde R_{02}(xy) \tilde R_{01}(x^2) R_{21}(\tfrac{x}{y})^{-1}.
\end{align*}
By virtue of \eqref{eqn:expression2} the right-hand side of the last expression can be seen to equal $\phi_{\tilde R}(K^+)_2(y) \tilde R_{12}(xy) \phi_{\tilde R}(K^+)_1(x) R_{21}(\tfrac{x}{y})^{-1}$, yielding the desired conclusion.
\end{proof}

\section{Properties of R- and K-matrices associated to the fundamental representation of $U_q(\hat{\f{sl}}_n)$}

For the next two lemmas, denote the matrix entries of $\bar R := R(0)$, $D$, $\bar K := K^-(0)$ and $\bar L := K'(0)$ by $r_{\alpha \beta}^{\gamma \delta}, d_\alpha^\gamma, k_\alpha^\gamma, l_\alpha^\gamma \in \C$, respectively, for $\alpha,\beta,\gamma,\delta \in \{1,\ldots,n\}$, viz.
\begin{gather*}
\bar R(v_\alpha \otimes v_\beta) = \sum_{\gamma,\delta} r_{\alpha \beta}^{\gamma \delta} v_\gamma \otimes v_\delta, \qquad D(v_\alpha) = \sum_\gamma d_\alpha^\gamma v_\gamma, \\
\bar K(v_\alpha) = \sum_\gamma k_\alpha^\gamma v_\gamma, \qquad \bar L(v_\alpha) = \sum_\gamma l_\alpha^\gamma v_\gamma.
\end{gather*}

\begin{lem} \label{lem:transfermatrixperatzero}
Let $R \in \Mer(V)$ and $D \in \End(V)$.
Assume that $0 \in \dom(R)$ and that, with respect to a certain ordered basis $(v_\alpha)_{\alpha=1}^n$ of $V$, we have \eqref{eqn:R0} and \eqref{eqn:D}.
Then for all $N$-tuples $\bm \alpha$ satisfying $1 \leq \alpha_1 \leq \ldots \leq \alpha_N \leq n$, $T(0;\bm z)$ acts trivially on each subspace $W^S_{\bm \alpha}$, i.e.\ there exists $C_{\bm \alpha } \in \C$ such that
\[ \hspace{20mm} T(0;\bm z) (v_{\bm \beta}) = C_{\bm \alpha} v_{\bm \beta}, \qquad \n{for } v_{\bm \beta} \in W^S_{\bm \alpha}. \]
\end{lem}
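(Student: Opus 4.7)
The plan is to compute $T(0;\bm z) v_{\bm \beta}$ by unfolding the partial trace in the basis $(v_\alpha)$. Because $R(0/z_i) = R(0) =: \bar R$ for every $i$, at $x=0$ the transfer matrix simplifies to
\[ T(0;\bm z) = \Tr_0 D_0 \bar R_{0N} \cdots \bar R_{01}, \]
with no residual dependence on the inhomogeneities. Writing the matrix entries of $\bar R$ and $D$ as $r^{\gamma\delta}_{\alpha\beta}$ and $d^\gamma_\alpha$, the coefficient of $v_{\bm \delta}$ in $T(0;\bm z)v_{\bm \beta}$ is
\[ c^{\bm \delta}_{\bm \beta} = \sum_{\alpha,\gamma_1,\ldots,\gamma_{N-1}} d^\alpha_\alpha \, r^{\gamma_1\delta_1}_{\alpha\beta_1} r^{\gamma_2\delta_2}_{\gamma_1\beta_2}\cdots r^{\alpha\delta_N}_{\gamma_{N-1}\beta_N}, \]
the closure of the trace identifying the final auxiliary index with the initial one.

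Next I would invoke \eqref{eqn:R0}: each factor $r^{\gamma_i\delta_i}_{\gamma_{i-1}\beta_i}$ vanishes unless $\gamma_i \leq \gamma_{i-1}$. Combined with the boundary condition $\gamma_0=\gamma_N=\alpha$, the resulting chain $\alpha \geq \gamma_1 \geq \cdots \geq \gamma_{N-1} \geq \alpha$ must be constant, so $\gamma_i = \alpha$ throughout. To pin $\delta_i = \beta_i$, I would use the weight-preservation property \eqref{eqn:Rimage}, which is part of the hypotheses of Thm.\ \ref{thm:commutingtransfermatricesper2} in whose proof this lemma is invoked: $r^{\alpha\delta_i}_{\alpha\beta_i}$ is nonzero only if $\{\alpha,\beta_i\}=\{\alpha,\delta_i\}$ as multisets, forcing $\delta_i=\beta_i$. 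Hence $c^{\bm \delta}_{\bm \beta}=0$ unless $\bm \delta = \bm \beta$, and
\[ c^{\bm \beta}_{\bm \beta} = \sum_\alpha d^\alpha_\alpha \prod_{i=1}^N r^{\alpha\beta_i}_{\alpha\beta_i}. \]
Since this $N$-fold product is over commuting scalars, it depends only on the multiset $\{\beta_1,\ldots,\beta_N\}$, which coincides with $\{\alpha_1,\ldots,\alpha_N\}$ whenever $v_{\bm \beta}\in W^S_{\bm \alpha}$; this delivers a common scalar $C_{\bm \alpha}$ for every $v_{\bm \beta}$ in the sector.

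The main obstacle is the step $\delta_i = \beta_i$: the stated assumption \eqref{eqn:R0} on its own only yields $\delta_i \geq \beta_i$, and one additionally needs the ice-rule type condition \eqref{eqn:Rimage} to cut the support of $\bar R(v_\alpha \otimes v_{\beta_i})$ down to $\C v_\alpha \otimes v_{\beta_i}$ specifically, once the auxiliary index has been pinned at $\alpha$. The chain-collapse argument forcing $\gamma_i\equiv\alpha$ is essentially immediate once the trace has been unfolded explicitly, as is the multiset invariance that yields the single scalar $C_{\bm \alpha}$.
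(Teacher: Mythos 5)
Your proof is correct and follows essentially the same route as the paper's: the paper packages the chain-collapse into the inductive claim that $\bar R_{0N}\cdots\bar R_{01}(v_\alpha\otimes v_{\bm\beta})$ lies in $\bigl(\prod_{i=1}^N r_{\alpha\beta_i}^{\alpha\beta_i}\bigr)v_\alpha\otimes v_{\bm\beta}+\sum_{\gamma<\alpha}v_\gamma\otimes V^{\otimes N}$, kills the strictly lower part using the diagonal $D_0$ and the closure of the trace, and lands on the same permutation-invariant eigenvalue $\sum_\alpha d_\alpha^\alpha\prod_{i=1}^N r_{\alpha\beta_i}^{\alpha\beta_i}$. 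Your remark that \eqref{eqn:R0} alone only gives $\delta_i\geq\beta_i$ is well taken: the paper's inductive claim tacitly uses \eqref{eqn:Rimage} as well (which is available where the lemma is invoked, in the proof of Thm.\ \ref{thm:commutingtransfermatricesper2}), so your explicit appeal to it repairs a small omission in the lemma's stated hypotheses rather than constituting a different argument.
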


\begin{proof}
The condition \eqref{eqn:R0} can be generalized by induction with respect to $N$ to
\begin{equation} \label{eqn:U0} \bar R_{0N} \cdots \bar R_{01}(v_\alpha \otimes v_{\bm \beta}) \in \Bigl( \prod_{i=1}^N r_{\alpha \beta_i}^{\alpha \beta_i} \Bigr) v_\alpha \otimes v_{\bm \beta} +  \sum_{\gamma < \alpha} v_\gamma \otimes V^{\otimes N} \end{equation}
for $\bm \beta \in \{1,\ldots,n\}^N$.
Hence
\[ D_0 \bar R_{0N} \cdots \bar R_{01} (v_\alpha \otimes v_{\bm \beta} ) \in d_\alpha^\alpha \Bigl( \prod_{i=1}^N r_{\alpha \beta_i}^{\alpha \beta_i} \Bigr) v_\alpha \otimes v_{\bm \beta} +  \sum_{\gamma < \alpha} v_\gamma \otimes V^{\otimes N} \]
and
\[ T(0;\bm z)(v_{\bm \beta}) = \biggl( \sum_{\alpha=1}^n d_\alpha^\alpha  \prod_{i=1}^N r_{\alpha \beta_i}^{\alpha \beta_i}  \biggr) v_{\bm \beta}. \]
Evidently the coefficient in front of $v_{\bm \beta}$ is unchanged if we permute the $\beta_i$.
\end{proof}

\begin{lem} \label{lem:transfermatrixatzero}
Let $R \in \Mer(V^{\otimes 2})^\times$, $K',K^- \in \Mer(V)^\times$ and $\bm z \in (\C^\times)^N$.
Suppose that there exists $J \in \GL(V)$ such that \eqref{eqn:Rsigmasigma} holds.
Assume that 0 is in the domains of $R$, $K'$ and $K^-$ and that, with respect to a certain ordered basis $(v_\alpha)_{\alpha=1}^n$ of $V$, we have \eqref{eqn:R0} and \eqref{eqn:K0}.
Then for all $N$-tuples $\bm \alpha$ satisfying $1 \leq \alpha_1 \leq \ldots \leq \alpha_N \leq \tfrac{n+1}{2}$, $\tilde{\ca T}(0;\bm z)$ acts trivially on each subspace $W^{\ca S}_{\bm \alpha}$, i.e.\ there exists $\ca C_{\bm \alpha } \in \C$ such that
\[ \hspace{20mm} \tilde{\ca T}(0;\bm z) (v_{\bm \beta}) = \ca{C}_{\bm \alpha} v_{\bm \beta}, \qquad \n{if } v_{\bm \beta} \in W^{\ca S}_{\bm \alpha}. \]
\end{lem}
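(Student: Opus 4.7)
The plan is to mimic the proof of Lemma \ref{lem:transfermatrixperatzero}, but with extra bookkeeping to track the auxiliary index through the full ``double row'' of R-matrices in
$$\tilde{\ca T}(0;\bm z) \;=\; \Tr_0\, \bar L_0\,\bar R_{10}\cdots \bar R_{N0}\,\bar K_0\,\bar R_{0N}\cdots \bar R_{01},$$
where $\bar R := R(0)$, $\bar K := K^-(0)$, $\bar L := K'(0)$, with matrix entries $r^{\gamma\delta}_{\alpha\beta}$, $k^\gamma_\alpha$, $l^\gamma_\alpha$. The hypotheses \eqref{eqn:R0} and \eqref{eqn:K0} are precisely a set of monotonicity constraints on how each of the four blocks moves the auxiliary index, and the computation reduces to combining these.

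First I would use the $N$-induction in the proof of Lemma \ref{lem:transfermatrixperatzero} to get $\bar R_{0N}\cdots \bar R_{01}(v_\alpha \otimes v_{\bm \beta}) = \prod_i r^{\alpha\beta_i}_{\alpha\beta_i}\, v_\alpha \otimes v_{\bm \beta} + \sum_{\gamma<\alpha} v_\gamma \otimes V^{\otimes N}$, so the auxiliary index after the inner block is $\leq \alpha$ with an explicit scalar on the diagonal-in-auxiliary piece. I would then apply, in turn: (i) $\bar K v_\epsilon \in \sum_{\mu \geq \bar\epsilon}\C v_\mu$ from \eqref{eqn:K0}, which forces the post-$\bar K_0$ auxiliary index to be $\geq \bar\alpha$, with equality contributing the factor $k^{\bar\alpha}_\alpha$; (ii) \eqref{eqn:R0} once more, whereby each $\bar R_{i0}$ can only further raise the auxiliary index; and (iii) $\bar L v_\mu \in \sum_{\epsilon \leq \bar\mu}\C v_\epsilon$, so the coefficient of $v_\alpha$ in $\bar L v_\mu$ is nonzero only for $\mu \leq \bar\alpha$. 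Since $\Tr_0$ extracts the diagonal-in-auxiliary summed over $\alpha$, the combined constraints pin the auxiliary index to equal $\bar\alpha$ throughout the outer R-matrix block. On this constrained sector each $\bar R_{i0}$ acts on the $i$-th bulk factor as the operator $\bar R^{[\bar\alpha]} \in \End(V)$, $v_b \mapsto \sum_{c \leq b} r^{c\bar\alpha}_{b\bar\alpha} v_c$, and these commuting factors compose to $(\bar R^{[\bar\alpha]})^{\otimes N}$, yielding
$$\tilde{\ca T}(0;\bm z)(v_{\bm \beta}) \;=\; \sum_{\alpha=1}^n l^\alpha_{\bar\alpha}\, k^{\bar\alpha}_\alpha \Bigl(\prod_{i=1}^N r^{\alpha\beta_i}_{\alpha\beta_i}\Bigr)\,(\bar R^{[\bar\alpha]})^{\otimes N}(v_{\bm \beta}).$$

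The main obstacle is to establish the diagonality $\bar R^{[\bar\alpha]}(v_b) \propto v_b$, i.e.\ that $r^{c\bar\alpha}_{b\bar\alpha} = 0$ for $c < b$. This is an ``ice-rule'' condition strictly stronger than \eqref{eqn:R0}; it is delivered by \eqref{eqn:Rimage}, which is among the hypotheses of Thm.\ \ref{thm:commutingtransfermatrices2} where the lemma is invoked but is not listed among the lemma's own hypotheses, and I would adopt it as a tacit assumption. Granting this, $(\bar R^{[\bar\alpha]})^{\otimes N}(v_{\bm \beta}) = \prod_i r^{\beta_i\bar\alpha}_{\beta_i\bar\alpha}\, v_{\bm \beta}$, producing the eigenvalue $\ca C_{\bm \beta} = \sum_\alpha l^\alpha_{\bar\alpha}\, k^{\bar\alpha}_\alpha \prod_i r^{\alpha\beta_i}_{\alpha\beta_i}\, r^{\beta_i\bar\alpha}_{\beta_i\bar\alpha}$. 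Invariance of $\ca C_{\bm \beta}$ under permutations of $\bm \beta$ is immediate from the product structure. For invariance under a single inversion $\beta_j \mapsto \bar\beta_j$, I would extract from \eqref{eqn:Rsigmasigma} the entrywise symmetry $r^{\gamma\delta}_{\alpha\beta} = r^{\bar\delta\bar\gamma}_{\bar\beta\bar\alpha}$ and then, after the change of summation variable $\alpha \mapsto \bar\alpha$, verify that the $j$-th factor $r^{\alpha\beta_j}_{\alpha\beta_j}\, r^{\beta_j\bar\alpha}_{\beta_j\bar\alpha}$ transforms into itself; all the other data $(l^\alpha_{\bar\alpha}, k^{\bar\alpha}_\alpha, i\ne j$ factors$)$ are unaffected. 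Hence $\ca C_{\bm \beta}$ depends only on the $\ca S_N$-orbit of $\bm \beta$, which is precisely the required scalar action on $W^{\ca S}_{\bm \alpha}$.
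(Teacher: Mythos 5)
Your proposal follows essentially the same route as the paper's proof: track the auxiliary index through the double row of $R(0)$'s, use the monotonicity encoded in \eqref{eqn:R0} and \eqref{eqn:K0} together with the closing of the trace to pin the auxiliary index to $\alpha$ on the inner block and to $\bar\alpha$ on the outer block, read off the eigenvalue $\sum_\alpha k^{\bar\alpha}_\alpha l^\alpha_{\bar\alpha}\prod_i r^{\alpha\beta_i}_{\alpha\beta_i} r^{\beta_i\bar\alpha}_{\beta_i\bar\alpha}$, and invoke \eqref{eqn:Rsigmasigma} for the $\ca S_N$-invariance. Your observation that the diagonality in the bulk index of the auxiliary-preserving component of $\bar R$ requires the ``ice rule'' \eqref{eqn:Rimage}, which is not among the lemma's stated hypotheses, is accurate; the paper's proof uses this tacitly as well (its displayed containments for both $R$-blocks already assume that the leading, auxiliary-preserving term is strictly diagonal in the bulk indices), so flagging it explicitly is an improvement rather than a deviation. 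The one step that is wrong as written is your verification of invariance under the inversion $\beta_j\mapsto\bar\beta_j$: after the change of summation variable $\alpha\mapsto\bar\alpha$ the data $l^\alpha_{\bar\alpha}$, $k^{\bar\alpha}_\alpha$ and the $i\ne j$ factors are \emph{not} unaffected (they become $l^{\bar\alpha}_\alpha$, $k^{\alpha}_{\bar\alpha}$, etc., and no symmetry of the K-matrices under conjugation by $J$ is assumed). Fortunately the change of variable is unnecessary: the entrywise symmetry $r^{\gamma\delta}_{\alpha\beta}=r^{\bar\delta\bar\gamma}_{\bar\beta\bar\alpha}$ that you correctly extracted from \eqref{eqn:Rsigmasigma} gives $r^{\beta_i\bar\alpha}_{\beta_i\bar\alpha}=r^{\alpha\bar\beta_i}_{\alpha\bar\beta_i}$ with $\alpha$ held fixed, so each factor of the product equals $r^{\alpha\beta_i}_{\alpha\beta_i}r^{\alpha\bar\beta_i}_{\alpha\bar\beta_i}$ and is manifestly invariant under $\beta_i\mapsto\bar\beta_i$ term by term in $\alpha$; this is precisely the rewriting the paper performs before concluding.
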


\begin{proof}
Recall from the proof of Lemma \ref{lem:transfermatrixperatzero} that \eqref{eqn:R0} implies \eqref{eqn:U0}.
In the same way we can derive
\[ \bar R_{10} \cdots \bar R_{N0}(v_{\bar \alpha} \otimes v_{\bm \beta}) \in  \Bigl( \prod_{i=1}^N r_{\beta_i \bar \alpha }^{\beta_i \bar \alpha } \Bigr)  v_{\bar \alpha} \otimes v_{\bm \beta} + \sum_{\gamma < \alpha} v_{\bar \gamma} \otimes V^{\otimes N}, \]
where owing to \eqref{eqn:Rsigmasigma} we may re-write each $r_{\beta_i \bar \alpha }^{\beta_i \bar \alpha }$ as $r_{\alpha \bar \beta_i }^{\alpha \bar \beta_i }$.
Combining the above remarks with \eqref{eqn:K0} we obtain
\begin{align*}
\bar K_0 \bar R_{0N} \cdots \bar R_{01}(v_\alpha \otimes v_{\bm \beta}) &\in k_\alpha^{\bar \alpha}  \Bigl( \prod_{i=1}^N r_{\alpha \beta_i}^{\alpha \beta_i} \Bigr) v_{\bar \alpha} \otimes v_{\bm \beta} +  \sum_{\gamma < \alpha} v_{\bar \gamma} \otimes V^{\otimes N}, \displaybreak[2] \\
\bar L_0 \bar R_{10} \cdots \bar R_{N0}(v_{\bar \alpha} \otimes v_{\bm \beta} ) & \in  l_{\bar \alpha}^\alpha \Bigl( \prod_{i=1}^N r_{\alpha \bar \beta_i }^{\alpha \bar \beta_i } \Bigr)  v_{ \alpha} \otimes v_{\bm \beta} + \sum_{\gamma < \alpha} v_{ \gamma} \otimes V^{\otimes N},
\end{align*}
so that
\begin{align*}
& \bar L_0 \bar R_{10} \cdots \bar R_{N0} \bar K_0 \bar R_{0N} \cdots \bar R_{01}(v_\alpha \otimes v_{\bm \beta}) \in \\
&  \qquad  k_\alpha^{\bar \alpha} l_{\bar \alpha}^\alpha \Bigl( \prod_{i=1}^N r_{\alpha \beta_i}^{\alpha \beta_i}  r_{\alpha \bar \beta_i }^{\alpha \bar \beta_i } \Bigr) v_{ \alpha} \otimes v_{\bm \beta} +  \sum_{\gamma < \alpha} v_{ \gamma} \otimes V^{\otimes N}.
\end{align*}
Hence any $v_{\bm \beta}$ is an eigenfunction of $\tilde{\ca T}(0;\bm z)$ with the eigenvalue invariant under the action of $\ca S_N$:
\[ \tilde{\ca T}(0;\bm z) (v_{\bm \beta}) = \biggl( \sum_{\alpha=1}^n  k_\alpha^{\bar \alpha} l_{\bar \alpha}^\alpha \Bigl( \prod_{i=1}^N r_{\alpha \beta_i}^{\alpha \beta_i} r_{\alpha \bar \beta_i }^{\alpha \bar \beta_i } \Bigr) \biggr) v_{\bm \beta}. \qedhere \]
\end{proof}

\begin{lem} \label{lem:transfermatrixatqinverse}
Let $R \in \Mer(V^{\otimes 2})^\times$, $K^-,K' \in \Mer(V)^\times$, $M \in \GL(V)$, $r \in \C^\times$ and $\bm z \in (\C^\times)^N$.
Let $\ca T$ be given by \eqref{eqn:inhbtransfermatrix}.
If \eqrefs{CS}{RMM} and \eqref{eqn:Kprimespecial} are satisfied and $\pm r^{-1} \in \dom(K^-)$ then
\[ \ca T(\pm r^{-1};\bm z) \propto  \bigl(\Tr K^-(\pm r^{-1})M\bigr) \Id_{V^{\otimes N}}. \]
\end{lem}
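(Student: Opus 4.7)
The plan is to reduce $\ca T(\pm r^{-1};\bm z)$ by successively applying (\ref{eqn:Kprimespecial}), crossing symmetry (\ref{CS}), and a partial-trace ``crossing cancelation'' identity. Setting $x = \pm r^{-1}$ in (\ref{eqn:inhbtransfermatrix}) and replacing $K'_0(\pm r^{-1})$ with $M_0$ via (\ref{eqn:Kprimespecial}), I would apply (\ref{CS}) to each of the $N$ inverse R-matrices in the form $R_{0i}(\tfrac{\pm r}{z_i})^{-1} \propto M_i\,\tilde R_{0i}(\tfrac{\pm r^{-1}}{z_i})\,M_i^{-1}$; the spectral parameters $r^2\cdot \tfrac{\pm r^{-1}}{z_i}=\tfrac{\pm r}{z_i}$ are arranged precisely so that the resulting $\tilde R$-matrices match the bare R-matrices on the right of $K^-_0$.

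Next, using (\ref{RMM}) in the derived form $M_j^{-1} R_{0j}(y) M_j = M_0 R_{0j}(y) M_0^{-1}$, cyclicity of $\Tr_0$, and the fact that $K^-_0, M_0$ and the various $M_j$'s mutually commute whenever they act on distinct spaces, I would collect the auxiliary $M$-factors outside the trace. Writing $M_{\n{all}} := M_1 \cdots M_N$, this should yield
\[ \ca T(\pm r^{-1};\bm z) \propto M_{\n{all}}\,\Tr_0\Bigl[\bigl({\textstyle\prod_{i=1}^N} \tilde R_{0i}(\tfrac{\pm r^{-1}}{z_i})\bigr)\,K^-_0(\pm r^{-1}) M_0\,\bigl({\textstyle\prod_{j=N}^1} R_{0j}(\tfrac{\pm r^{-1}}{z_j})\bigr)\Bigr]\,M_{\n{all}}^{-1}. \]

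The decisive step is then to establish, by induction on $N$, the partial-trace identity
\[ \Tr_0\bigl({\textstyle\prod_{i=1}^N} \tilde R_{0i}(y_i)\bigr) X_0 \bigl({\textstyle\prod_{j=N}^1} R_{0j}(y_j)\bigr) = (\Tr X)\,\Id_{V^{\otimes N}} \qquad (X \in \End(V)). \]
The inductive step peels off the outermost pair $\tilde R_{01}, R_{01}$ by means of the auxiliary identity $\Tr_0 \tilde R_{01}(y) Y R_{01}(y) = \Id_{V_1} \otimes \Tr_0 Y$, valid for any auxiliary space $V'$ and any $Y \in \End(V_{(0)} \otimes V')$; the latter reduces by a short computation in matrix elements to the defining relation $\tilde R^{t_0}R^{t_0} = \Id$, cf.\ (\ref{eqn:Rtilde}). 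Applying the key identity with $X = K^-(\pm r^{-1}) M$ cancels the outer $M_{\n{all}}$ and $M_{\n{all}}^{-1}$, giving the claim. The main obstacle is the tedious $M$-bookkeeping at the second step; the algebraic content of the lemma is concentrated in the partial-trace identity, which may be read as a local manifestation of crossing symmetry.
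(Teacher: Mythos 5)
Your proof is correct and reaches the result by the same essential mechanism as the paper's: replace $K'(\pm r^{-1})$ by $M$ via \eqref{eqn:Kprimespecial}, then use crossing symmetry to cancel the $N$ inverted R-matrices against the $N$ bare ones pairwise under $\Tr_0$, leaving $\Tr\bigl(K^-(\pm r^{-1})M\bigr)$. The packaging, however, genuinely differs. The paper first transposes the entire product in the auxiliary leg $0$ (via \eqref{traceoftransposes} and \eqref{transposeofproduct}) and then applies crossing in the operator form $\bigl(R_{0i}(\pm r z_i^{-1})^{-1}\bigr)^{t_0} M_0^t\, R_{0i}(\pm r^{-1} z_i^{-1})^{t_0} \propto M_0^t$, telescoping from the inside out; a single $M_0^t$ in the auxiliary space does all the work and no $M_i$'s in the state spaces ever appear. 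You instead apply \eqref{CS} in its $\tilde R$-form leg by leg, which forces the $M$-bookkeeping in the state spaces -- your handling of it via \eqref{RMM}, disjointness of tensor legs and cyclicity \eqref{commuteinsidetrace} is sound -- and then invoke the telescoping identity $\Tr_0\, \tilde R_{01}(y)\, Y\, R_{01}(y) = \Tr_0 Y$, which does indeed reduce to $\tilde R_{01}(y)^{t_0} R_{01}(y)^{t_0} = \Id$ exactly as you claim (it is the same cancellation that drives Lemma \ref{lem:bijection}). Your route buys a reusable, crossing-free partial-trace lemma and makes the role of $\tilde R$ explicit, at the cost of the extra conjugations; the paper's route buys brevity by hiding the cancellation entirely in the $t_0$-transposed picture. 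Both arguments are valid, and the proportionality bookkeeping and the final cancellation of $M_{\n{all}}$ against $M_{\n{all}}^{-1}$ on a scalar multiple of the identity are handled correctly.
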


\begin{proof}
Using \eqref{eqn:Kprimespecial} we have
\begin{align*}
\ca T(\pm r^{-1};\bm z) &\propto \Tr_0 M_0 R_{01}(\pm r z_1^{-1})^{-1} \cdots R_{0N}(\pm r z_N^{-1})^{-1} \cdot \\
& \hspace{20mm} \cdot K^-_0(\pm r^{-1}) R_{0N}(\pm r^{-1} z_N^{-1}) \cdots R_{01}(\pm r^{-1} z_1^{-1}).
\end{align*}
Hence, applying \eqref{traceoftransposes} and \eqref{transposeofproduct} we have
\begin{align*}
\ca T(\pm r^{-1};\bm z)&= \Tr_0 \bigl(M_0 R_{01}(\pm r z_1^{-1})^{-1} \cdots R_{0N}(\pm r z_N^{-1})^{-1} K^-_0(\pm r^{-1})\bigr)^{t_0} \cdot \\
& \hspace{20mm} \cdot \bigl( R_{0N}(\pm r^{-1} z_N^{-1}) \cdots R_{01}(\pm r^{-1} z_1^{-1})\bigr)^{t_0}  \displaybreak[2] \\
&= \Tr_0  K^-_0(\pm r^{-1})^t \bigl(R_{0N}(\pm r z_N^{-1})^{-1}\bigr)^{t_0} \cdots \bigl( R_{01}(\pm r z_1^{-1})^{-1} \bigr)^{t_0} M_0^t \cdot \\
& \hspace{20mm} \cdot  R_{01}(\pm r^{-1} z_1^{-1})^{t_0} \cdots R_{0N}(\pm r^{-1} z_N^{-1})^{t_0}.
\end{align*}
Combining \eqrefs{CS}{RMM} yields, for generic values of $\bm z$,
\[ M_0^t \bigl( R_{0i}(\pm  r^{-1} z_i^{-1} )^{t_0} \bigr)^{-1} \propto \bigl( R_{0i}(\pm  r z_i^{-1})^{-1} \bigr)^{t_0} M_0^t. \]
Repeatedly applying this we obtain $\ca T(\pm r^{-1};\bm z) \propto \Tr_0 K^-_0(\pm r^{-1})^{t} M_0^t$, from which the Lemma follows after applying \eqref{traceoftransposes}.
\end{proof}

\end{document}